\documentclass[12pt, a4paper]{amsart}
\usepackage[T2A]{fontenc}
\usepackage[utf8]{inputenc}
\usepackage{amsmath}
\usepackage{extarrows}
\usepackage{amssymb,amsthm,amscd}
\usepackage{hyperref}
\usepackage{graphicx}
\usepackage{fullpage}

\usepackage[shortlabels]{enumitem}
\usepackage{multirow}

\usepackage{tikz}
\usetikzlibrary{calc}

\usepackage[obeyDraft]{todonotes}

\theoremstyle{plain}
\newtheorem{theorem}{Theorem}[section]
\newtheorem{lemma}[theorem]{Lemma}

\newtheorem{proposition}[theorem]{Proposition}
\newtheorem{corollary}[theorem]{Corollary}
\theoremstyle{definition}
\newtheorem{definition}[theorem]{Definition}
\newtheorem{example}[theorem]{Example}
\newtheorem{remark}[theorem]{Remark}

\newtheorem{notation}[theorem]{Notation}

\theoremstyle{remark}
\newtheorem{claim}{Claim}[theorem]
\newenvironment{claimproof}
  {\begin{proof}[Proof of the claim]}
  {\end{proof}}

\def\Aut{\operatorname{Aut}}

\def\Bir{\operatorname{Bir}}

\def\supp{\operatorname{supp}}

\def\Spec{\operatorname{Spec}}

\def\PGL{\operatorname{PGL}}

\def\PSL{\operatorname{PSL}}

\def\id{{\mathrm{id}}}
\def\reg{{\rm reg}}
\def\sing{{\rm sing}}

\def\PP{{\mathbb P}}
\def\ZZ{{\mathbb Z}}

\def\RR{{\mathbb R}}
\def\QQ{{\mathbb Q}}

\def\FF{{\mathbb F}}
\def\AA{{\mathbb A}}

\def\KK{{\mathbb K}}

\def\O{{\mathcal O}}

\def\cO{{\mathcal O}}
\def\0{\circ}

\title{Open surfaces with a triangle at infinity}

\author[D. Chunaev]{Dmitriy Chunaev}
\email{dchunaev@hse.ru}
\address{Lomonosov Moscow State University, Moscow, Russia
\newline\indent HSE University, Faculty of Computer Science, Moscow, Russia}

\author[A. Perepechko]{Alexander Perepechko} 
\email{a@perep.ru}
\address{
HSE University, Faculty of Computer Science,
Pokrovsky blvd. 11, Moscow, 109028 Russia} 

\author[D. Shunin]{Daniil Shunin}
\email{sshunindaniil@gmail.com}
\address{Lomonosov Moscow State University, Moscow, Russia
\newline\indent HSE University, Faculty of Computer Science, Moscow, Russia}

\thanks{The work was supported by the Theoretical Physics and Mathematics Advancement Foundation ``BASIS''}

\subjclass{14J50, 14R20 (primary), 14L30, 05C60 (secondary).}

\keywords{affine
surface, automorphism group, algebraic group, birational transformation, simplicial complex}

\begin{document}

\begin{abstract}
  A triangle surface is an open algebraic surface completed by a triangle of  contractible $(-1)$-curves.
We establish a combinatorial description of their completions.
We also show that affine triangle surfaces are exactly cubic surfaces of Markov type. 
Explicit description of their automorphism groups is provided.
\end{abstract}

\maketitle

\section{Introduction}

The Markov equation $x^2+y^2+z^2=3xyz$ has been studied since 1879, see~\cite{Mar1879, Mar1880}; for surveys we refer to~\cite{Aig13, Sil26}.
Its positive integer solutions, the \emph{Markov triples}, are permuted by the three involutions of the form $(x,y,z)\mapsto(3yz-x,\,y,\,z)$.
These involutions organize the Markov triples into an infinite tree rooted at the solution $(1,1,1)$; the classical relation of this tree to the modular group goes back to Cohn~\cite{Coh55}.
The automorphism groups of wide classes of affine algebraic surfaces exhibit a similar behaviour.

Let $\KK$ be an algebraically closed field of characteristic zero.
The well-known \emph{Markov surface} is the affine surface in $\AA^3=\Spec\KK[x,y,z]$ defined by the rescaled Markov equation $xyz=x^2+y^2+z^2$.
There are many ways to generalize it, e.g., see~\cite{Bar91, BaSe24, DeSt24, GyMa23, HaTe20, UlYi22}.
We briefly recall some directions of generalization.

In \cite{Lam16,Gyo21,BaSe24,GyMa23}, a generalized equation $x^2+y^2+z^2+xy+yz+xz=6xyz$ was studied, as well as its variations, in the context of mutations of cluster algebras.
Integer solutions to a generalized equation $Ax^2+By^2+Cz^2-Dxyz=0$ were studied in \cite{Mor53,Ros79,JinSch01,HaTe20}.
Holomorphic automorphisms of Markov-type surfaces 
\begin{equation}\label{eq:intro-markov-type}
  x^2+y^2+z^2+xyz=ax+by+cz+d 
\end{equation}
are studied in \cite{ReRo24,Abb24-preprint,And25}.
And the dynamics of $\PGL(2,\ZZ)$ acting on such surfaces are studied in \cite{CaLo09}.

An algebro-geometric view is also considered as a primary one in the following papers.
An influential paper \cite{ElHu74} studies cubic surfaces of Markov type with a triangle of negative curves at the boundary. 
We mostly follow this approach.
Connection between involutions of cubic surfaces and singularities at the boundary are well studied in \cite{Mor52, Kol24-irred,KoVi25}.
For the classical theory of cubic surfaces, we refer to~\cite{Seg42, Dolg_CAG12}.
The Fricke surfaces go back to~\cite{FrKl12}; the so-called double Fricke surface $(x+y+z)^2=9xyz$ is studied in~\cite{UlYi22}.
 Finally, the \emph{Markov-like surfaces}, given in $\AA^3$ by the equation $x^2+y^2+z^2-xyz=c$, were studied in~\cite{Pe21}, and their automorphism groups were completely described.

All mentioned generalizations are defined by an equation of the form $xyz=f(x,y,z)$ with $\deg f\leq2$; they all share the property that the boundary divisor $D$ of the completion in $\PP^3$ consists of three (-1)-curves with normal crossings.
We study normal affine surfaces with such property. 
More precisely, we call a normal affine surface $Y$ a \emph{triangle surface} if it admits a completion whose boundary divisor is a triangle of contractible $(-1)$-curves lying in the regular locus.
The criterion~\cite[Theorem~4.17]{PeZa26} allows to describe completely transformations of the dual graph of components of $D$ corresponding to the birational transformations of the completion.
In this paper, we use this criterion to study the automorphism groups of triangle surfaces.

We show that this purely geometric definition of a triangle surface implies an embedding into $\AA^3$ with the equation \eqref{eq:intro-markov-type}.
Thus, the triangle surfaces are exactly the cubic surfaces of Markov type.

We extend the theory of triangle surfaces to the case of a not necessarily affine surface $Y$ admitting a completion by a triangle of $(-1)$-curves.
We call such completions \emph{triangle completions} and the arising completions by a cycle of two $0$-curves \emph{$(0,0)$-completions}.
The combinatorics of all such completions of $Y$ is expressed in a simplicial complex, which we call the \emph{triangle complex} $T(Y)$, and describe in Section~\ref{sec:triangle-completions}.
Its triangles correspond to triangle completions of $Y$, its edges correspond to $(0,0)$-completions, and its vertices correspond to the boundary components of these completions.
Moreover, vertices of $T(Y)$ are in natural bijection with the fibrations of $Y$ over $\AA^1$ with general fiber $\AA^1\setminus\{0\}$, see Section~\ref{sec:fibrations}.

In Theorem~\ref{th:aut-action}, there is a homomorphism $\Aut(Y)\to\PGL_2(\ZZ)$, whose kernel consists of the automorphisms of $Y$ preserving every component and every node of the boundary divisor of a triangle completion.
For a triangle surface, the kernel consists of the sign changes preserving the equation and is finite of order at most four, see Remark~\ref{cr:aut-natural-cubic}(1).
In fact, for triangle surfaces the description of the automorphism group becomes completely explicit.
By El-Huti~\cite{ElHu74}, the three Vieta involutions $\sigma_x,\sigma_y,\sigma_z$ of the cubic~\eqref{eq: general_equation} generate their free product $G_\sigma=\langle\sigma_x\rangle*\langle\sigma_y\rangle*\langle\sigma_z\rangle$, and $\Aut(Y)$ is generated by $G_\sigma$ together with the linear automorphisms of $Y$, see Proposition~\ref{prop: generators_El-Huti}. 
Building on this, Theorem~\ref{thm: automorphisms_of_cubic_surfaces} computes $\Aut(Y)$ for every triangle surface: it is the semidirect product of $G_\sigma$ by the group $\mathrm{Lin}(Y)$ of linear automorphisms of $Y$ which is a finite subgroup of $S_4$ defined by the coefficients $a$, $b$, $c$ of $\eqref{eq:intro-markov-type}$.
The largest group, $K_4\rtimes\PGL_2(\ZZ)$, occurs exactly for $a=b=c=0$, that is, for the Markov-like surfaces $xyz=x^2+y^2+z^2+d$, among them the Markov surface itself; this recovers the description obtained in~\cite{Pe21}.

The paper is organized as follows.
Section~\ref{sec:prelim} recalls the terminology of~\cite{PeZa26} concerning completions by normal crossing divisors.
In Section~\ref{sec:triangle-completions} we introduce the triangle complex $T(Y)$ and describe the action of $\Aut(Y)$ on it.
Section~\ref{sec:fibrations} attaches a fibration to every vertex of $T(Y)$ and characterizes these fibrations intrinsically.
In Section~\ref{sec:triangle-surfaces} we show that every triangle surface is a cubic surface $xyz=f(x,y,z)$ with $\deg f\leq2$; in Section~\ref{sec: cubic} we bring this equation to the normal form, prove the converse, and compute the automorphism groups.
Section~\ref{sec: examples} applies this to the Markov surface, to the double Fricke surfaces, and to generalized Markov numbers.

\section{Preliminaries}\label{sec:prelim}

We recall the terminology of \cite{PeZa26} concerning completions by normal crossing divisors, adapted to our setting. It is mainly used in Sections~\ref{sec:triangle-completions}--\ref{sec:triangle-surfaces}.

A pair $(X,D)$ is an \emph{NC-pair} if $X$ is a projective surface with isolated singularities and $D$ is a nonzero reduced effective normal crossing divisor contained in the regular locus of $X$, i.e., the only singularities of $D$ are nodes.
While in \cite{PeZa26} the surface $X$ is assumed to be normal, we require only isolated singularities. Indeed, since $D$ lies in the regular locus of $X$, the intersection index of any curve on $X$ with components of $D$ is still well defined.

The \emph{dual graph} $\Gamma(D)$ of $D$ has the irreducible components of $D$ as vertices, weighted by their self-intersection indices, and the nodes of $D$ as edges; the graph is \emph{circular} if it is a cycle.
An \emph{NC-completion} of a surface $Y$ is an NC-pair $(X,D)$ together with an isomorphism $X\setminus\supp D\cong Y$; we call $D$ the \emph{boundary divisor}.
An NC-pair $(X,D)$ is \emph{minimal} if the contraction of a $(-1)$-curve on $X$ that is a component of $D$ never leads to an NC-pair.

The blowup of an NC-pair $(X,D)$ at a point of $\supp D$, endowed with the reduced total transform of $D$, is again an NC-pair; the blowup is called \emph{inner} if its center is a node of $D$ and \emph{outer} otherwise. A blowdown is inner (resp.\ outer) if the inverse blowup is.
A \emph{birational map of NC-pairs} $(X,D)\dashrightarrow(X',D')$ is a birational map $X\dashrightarrow X'$ that restricts to an isomorphism $X\setminus\supp D\to X'\setminus\supp D'$; it is \emph{inner} if it decomposes into inner blowups, inner blowdowns, and isomorphisms of NC-pairs.
Any automorphism of $Y$ extends to a unique birational map between any two NC-completions of $Y$; in particular, the group $\Bir(X,D)$ of birational selfmaps of an NC-pair is naturally identified with $\Aut(Y)$, where $Y=X\setminus\supp D$.
We denote by $\Aut(X,D)$ the group of automorphisms of $X$ preserving $D$ and by $\Aut^\natural(X,D)$ its subgroup of automorphisms preserving every component and every node of $D$. The latter is the kernel of the natural homomorphism $\Aut(X,D)\to\Aut(\Gamma(D))$, see \cite[Remark~2.3]{PeZa26}.

Given a birational map of NC-pairs $\phi\colon(X_1,D_1)\dashrightarrow(X_2,D_2)$, a \emph{decomposition} of $\phi$ is a pair of birational morphisms $\psi_i\colon(\hat X,\hat D)\to(X_i,D_i)$ of NC-pairs such that $\phi=\psi_2\circ\psi_1^{-1}$. It is \emph{relatively minimal} if no $(-1)$-curve in $\hat D$ is contracted by both $\psi_1$ and $\psi_2$.

Finally, we recall the notion of a triangulation of a weighted circular graph, see \cite[Definition~A.2]{PeZa26}. A \emph{triangulation} of a weighted circular graph $B$ is a simplicial $2$-complex $\mathcal{C}$ homeomorphic to a disc such that all vertices of $\mathcal{C}$ lie on the boundary circle, the boundary of $\mathcal{C}$ equals $B$, and every vertex of degree $k$ in the $1$-skeleton of $\mathcal{C}$ has weight $-k+1$ in $B$. If a triangulation of $B$ exists, we say that $B$ is \emph{triangulable}. The \emph{adjacency graph} of a triangulation $\mathcal{C}$ has the triangles of $\mathcal{C}$ as vertices, two of them being adjacent if they share an edge; it is always a tree. If the adjacency tree is a path, we call $\mathcal{C}$ a \emph{chain of triangles}. For example, the $3$-cycle with all weights $-1$ is triangulated by a single triangle, whereas the $2$-cycle with both weights $0$ is not triangulable.

\section{Triangle completions}\label{sec:triangle-completions}

\begin{definition}\label{def:triangle-pair}
  An NC-pair $(X,D)$ is called a \emph{triangle pair} if $D$ consists of three smooth rational $(-1)$-curves and the dual graph $\Gamma(D)$ is a cycle of length 3.
  It is called a \emph{$(0,0)$-pair} if $D$ consists of two smooth rational $0$-curves and $\Gamma(D)$ is a cycle of length 2.
  A \emph{triangle completion} (resp.\ a \emph{$(0,0)$-completion}) of a surface $Y$ is an NC-completion of $Y$ that is a triangle pair (resp.\ a $(0,0)$-pair).
\end{definition}

\begin{remark}\label{rm:triangle-vs-00}
  Let $(X,D)$ be a triangle pair with $D=L_1+L_2+L_3$.
  The contraction of $L_1$ is an inner blowdown onto a $(0,0)$-pair: the images of $L_2$ and $L_3$ are $0$-curves meeting transversally at two points, namely the image of $L_1$ and the node $L_2\cap L_3$.
  Conversely, the inner blowup of a $(0,0)$-pair at any of its two nodes is a triangle pair.
  In particular, a surface admits a triangle completion if and only if it admits a $(0,0)$-completion.
  Note also that a $(0,0)$-pair is minimal, since its components are not $(-1)$-curves, whereas a triangle pair is not.
\end{remark}

We will describe birational maps between triangle and $(0,0)$-completions of a fixed surface $Y$ in terms of $\PGL(2,\ZZ)$.
In the following proposition we recall some additional results from \cite{PeZa26}.

\begin{proposition}\label{pr:00-completions-PZ}
  Let $(X_1,D_1)$ be a $(0,0)$-completion of a surface $Y$ and let $\phi\colon (X_1,D_1)\dashrightarrow (X_2,D_2)$ be a birational map of NC-pairs such that $(X_2,D_2)$ is minimal.
  Then the following hold.
  \begin{enumerate}[(i)]
    \item The NC-pair $(X_2,D_2)$ is a $(0,0)$-completion of $Y$.\label{pr:00-completions-PZ:00}
    \item There exists a decomposition $\phi=\psi_2\circ\psi_1^{-1}$, where each morphism
  $\psi_i\colon (\hat{X},\hat D)\to (X_i,D_i)$, $i=1,2$, is a composition of inner blowups. \label{pr:00-completions-PZ:dominating-pair}
    \item For any such decomposition the dual graph $\Gamma(\hat D)$ is circular.\label{pr:00-completions-PZ:circular}
    \item If $\phi$ is not an isomorphism, then $\Gamma(\hat D)$ is triangulable. 
    Moreover, if $(\hat{X},\hat D)$ is relatively minimal, then $\Gamma(\hat D)$ admits a triangulation which is a chain of triangles.\label{pr:00-completions-PZ:chain}
  \end{enumerate}
\end{proposition}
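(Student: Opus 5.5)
The plan is to reduce every item to the general results of \cite{PeZa26} on birational maps between NC-pairs. The paper says these facts are recalled from there, so I mostly need to check that the hypotheses fit.

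First I would take a relatively minimal decomposition $\phi=\psi_2\circ\psi_1^{-1}$ with $\psi_i\colon(\hat X,\hat D)\to(X_i,D_i)$. I would then use the factorization results of \cite{PeZa26}: a birational map between NC-pairs whose source has circular boundary is inner, and it factors through blowups only. This holds because a circular boundary has no tips, and outer blowdowns require a $(-1)$-curve that meets the rest of the boundary at most once. Concretely, I would argue by induction on the number of blowups in $\psi_1$. Every blowup center of $\psi_1$ lying over $D_1$ must be a node: otherwise the exceptional curve of an outer blowup has to be contracted by $\psi_2$, since $\phi$ is an isomorphism on $Y$ and the target has no extra components. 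This argument uses minimality of $(X_2,D_2)$ and relative minimality. It gives (ii). For (iii), note that inner blowups of a circular graph preserve circularity, since an inner blowup subdivides an edge of the cycle. Hence $\Gamma(\hat D)$ is circular. Contracting to $X_2$ by inner blowdowns keeps the boundary circular, and the number of components is preserved modulo the blowups. Then $D_2$ is a minimal circular boundary whose components are rational curves, since the components of $D_1$ are.

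For (i), I would use self-intersection bookkeeping on the cycle. Inner blowups and blowdowns preserve the invariant $\sum_i (C_i^2+3)$ over a circular chain of rational curves, which is a toric-type invariant. For a $(0,0)$-pair this sum equals $6$, with $n=2$ components. A minimal cycle has no $(-1)$-component whose contraction keeps NC. For a cycle of length at least $3$, any $(-1)$-curve can be contracted inner, so minimality forces no $(-1)$-curves. The classification of minimal circular boundaries with this invariant, via the triangulability/continued fraction description in \cite[Appendix~A]{PeZa26}, then forces $D_2$ to be a $2$-cycle of $0$-curves. The $1$-cycle case is excluded because $D$ must have smooth components.

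For (iv), when $\phi$ is not an isomorphism, $\hat D$ is obtained from the $2$-cycle of $0$-curves by at least one inner blowup. The first inner blowup yields the triangle of $(-1)$-curves, which is triangulable. Each further inner blowup at a node adds a vertex of weight $-1$ adjacent to both endpoints and lowers their weights by one. On the complex this amounts to gluing a new triangle on the boundary edge, so triangulability propagates by induction. In the relatively minimal case I would show the adjacency tree is a path. A triangle with three neighbors would correspond to an ear configuration that forces some $(-1)$-curve of $\hat D$ to be contracted by both $\psi_1$ and $\psi_2$. I expect this step to be the main obstacle. To handle it, I would first try to track the two contractions as removals of ears (degree-2 vertices) from the triangulated disc: $\psi_1$ and $\psi_2$ each remove ears down to a single edge, namely the two $0$-curves. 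If the two final edges are the opposite ends of a path, there are no common ears; a branching tree would yield a common ear, contradicting relative minimality. I would cite \cite[Appendix~A]{PeZa26} for the ear-removal correspondence.
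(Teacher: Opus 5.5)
Relying on \cite{PeZa26} is exactly what the paper does. Its proof consists only of citations: (i) and the chain statement of (iv) come from \cite[Proposition~A.4]{PeZa26} and Case~6 of its proof, and (ii)--(iii) come from \cite[Proposition~4.15, Theorem~4.17]{PeZa26}. The trouble is the justification you offer in place of the citation for (ii). The principle you attribute to \cite{PeZa26}, that a birational map of NC-pairs out of a circular boundary is inner, is false without minimality of the target. The outer blowup of the $(0,0)$-pair at a non-nodal boundary point is such a map, and it is not inner. So outer components must be excluded by the minimality of $(X_2,D_2)$ together with the fact that it completes the same $Y$, and your induction step simply assumes this. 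Saying that an outer exceptional curve ``has to be contracted by $\psi_2$ since the target has no extra components'' presupposes that every component of $D_2$ is an inner component of $D_1$. That is the whole content of (i)--(ii). The absence of tips in $\Gamma(D_1)$ only restricts blowdowns on the source side. It does not stop $\psi_1$ from performing outer blowups, and minimal NC-pairs whose boundary is a cycle with trees attached do exist. (Once you know that $\psi_2$ contracts every non-inner component of $\hat D$, relative minimality does finish the job: the blowups of $\psi_1$ creating non-inner curves can be moved to the end, and the last one yields a $(-1)$-curve contracted by both $\psi_i$.) This step has to be quoted from \cite[Theorem~4.17]{PeZa26} or genuinely proved.

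Your argument for (i) has a separate gap. The quantity $\sum(C_i^2+3)$ is indeed invariant under inner blowups, but it does not single out the $(0,0)$-cycle among minimal cycles. The $2$-cycle with weights $(2,-2)$ and the $3$-cycle with weights $(0,0,-3)$ contain no $(-1)$-curves and also have invariant $6$. What does work, once innerness is known, is the following. The class consisting of the $(0,0)$-cycle and the triangulable cycles is closed under inner blowups (glue a triangle, as in Remark~\ref{rm:canonical-triangulation}). It is also closed under inner blowdowns: a $(-1)$-vertex of a triangulation has degree $2$, so contracting it removes an ear, and contracting a vertex of a single triangle gives the $(0,0)$-cycle. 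Moreover, every triangulable cycle has a $(-1)$-ear whose contraction is again an NC-pair. Hence a minimal pair reached from a $(0,0)$-pair by inner moves is a $(0,0)$-pair. This also rules out a $1$-cycle, which your smoothness remark does not, since NC divisors here may have nodal components.

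Finally, the chain statement in (iv), which you single out as the main obstacle, is a short count. If $\hat D$ has more than three components, every leaf of the adjacency tree contains exactly one vertex of degree $2$. That vertex is a $(-1)$-curve, so by relative minimality it is a component of $D_1$ or of $D_2$. The two components of each $D_i$ span an edge of $T(\hat D)$, by the canonical triangulation along $\psi_i$ and Lemma~\ref{lm:triangulation-unique}. So two such ears coming from the same $D_i$ would lie in one triangle, forcing $\hat D$ to be a single triangle. Thus there are at most two leaves, and the adjacency tree is a path.
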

\begin{proof}
  Item \ref{pr:00-completions-PZ:00} follows from \cite[Proposition~A.4]{PeZa26}.
  Items \ref{pr:00-completions-PZ:dominating-pair} and \ref{pr:00-completions-PZ:circular} follow from \cite[Proposition~4.15, Theorem~4.17]{PeZa26}.
  Item \ref{pr:00-completions-PZ:chain} follows from Case~6 in the proof of \cite[Proposition~A.4]{PeZa26}.
\end{proof}

\begin{remark}\label{rm:canonical-triangulation}
  Consider an inner birational morphism $\psi\colon(\hat X,\hat D)\to(X,D)$ and assume that $\Gamma(D)$ is triangulable.
  Then there is a triangulation of $\Gamma(\hat D)$ obtained recursively by adding a triangle for each inner blowup in a decomposition of $\psi$ into inner blowups, starting with a triangulation of $\Gamma(D)$. 
  The added triangle is adjoined to the edge corresponding to the blown up node of the boundary divisor.
\end{remark}

\begin{lemma}\label{lm:triangulation-unique}
  A weighted circular graph admits at most one triangulation.
\end{lemma}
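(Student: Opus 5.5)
Uniqueness goes by induction on the number $n$ of vertices of the weighted circular graph $B$. The engine is a combinatorial fact about triangulated polygons with all vertices on the boundary: for $n\ge 4$ such a triangulation has at least two \emph{ears}, i.e.\ triangles two of whose edges are boundary edges. The apex of an ear is a vertex of degree $2$ in the $1$-skeleton. By the weight condition, degree $2$ means weight $-1$; conversely, weight $-1$ forces degree $2$. So in any triangulation $\mathcal C$ of $B$, the vertices of degree $2$ are exactly the vertices of weight $-1$, and these are determined by $B$ alone.

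For the base case, a triangulation needs $n\ge 3$. The case $n=3$ is forced: the complex is the single triangle, which exists only when all weights are $-1$. For the inductive step, take $n\ge 4$ and two triangulations $\mathcal C,\mathcal C'$ of $B$. Pick a vertex $v$ of weight $-1$; it exists because $\mathcal C$ has an ear. In both $\mathcal C$ and $\mathcal C'$, $v$ has degree $2$, so its only neighbours are its two boundary neighbours $u,w$. Hence both complexes contain the triangle $uvw$ and the diagonal $uw$.

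Now remove $v$ and the triangle $uvw$ from both. This yields triangulations of the circular graph $B_v$ obtained from $B$ by deleting $v$, joining $u$ and $w$ by an edge, and raising the weights of $u$ and $w$ by $1$. The weight shift is needed because $u$ and $w$ each lose one edge in the $1$-skeleton, namely $uv$ and $wv$ respectively, while $uw$ stays; it mirrors contracting a $(-1)$-curve at an inner blowup. One checks routinely that the result is still a disc with all vertices on the boundary circle and that the degree--weight condition holds at every vertex of $B_v$. By induction the two reduced triangulations coincide. Re-attaching the same triangle $uvw$ then gives $\mathcal C=\mathcal C'$.

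The main point requiring care is the ear lemma and the claim that degree $2$ occurs exactly at weight $-1$. I would prove the ear lemma by the standard argument: the adjacency graph of triangles is a tree, as recalled in the preliminaries; for $n\ge4$ this tree has at least two leaves, and a leaf triangle has only one interior edge, so it is an ear. The equivalence ``degree $2$ $\Leftrightarrow$ weight $-1$'' is immediate from the definition, weight $=-k+1$, so the only genuine input is the existence of some vertex of degree $2$ for the reduction. A minor subtlety is $n=4$: there $u$ and $w$ may become adjacent twice in $B_v$, but the reduction still produces a $3$-cycle, which is consistent.
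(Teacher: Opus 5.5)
Your proof is correct and follows the paper's argument essentially step for step: induction on the number of vertices, a weight-$(-1)$ vertex obtained from a leaf of the adjacency tree, the triangle $uvw$ forced in every triangulation, and its removal with the weights of $u,w$ raised by $1$. One small slip, which does not affect the argument: your closing remark about $n=4$ is off, since there $B_v$ is an ordinary $3$-cycle with $u,w$ adjacent only once; a doubled edge would only arise for $n=3$, where you do not perform the reduction.
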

\begin{proof}
  Let $\mathcal{C}$ be a triangulation of a weighted circular graph $B$; we proceed by induction on the number of vertices.
  If $B$ has three vertices, then $\mathcal{C}$ is the single triangle spanned by them.
  Otherwise, choose a vertex $v$ of weight $-1$; it exists, since the adjacency tree of $\mathcal{C}$ has a leaf, and a leaf triangle contains a vertex belonging to no other triangle, whose degree is thus $2$.
  Conversely, any vertex $v$ of weight $-1$ has degree $2$, so in every triangulation of $B$ it spans a triangle $\Delta_v$ with its two neighbours $u,w$ in $B$, and $\Delta_v$ is a leaf of the adjacency tree.
  Removing $\Delta_v$ yields a triangulation of the circular graph obtained from $B$ by removing $v$, joining $u$ to $w$, and increasing their weights by $1$.
  The latter triangulation is unique by the induction hypothesis, hence so is $\mathcal{C}$.
\end{proof}

Thus, if the dual graph $\Gamma(D)$ of an NC-pair $(X,D)$ is triangulable, then its triangulation is unique, and we denote it by $T(D)$.

\begin{definition}
 Let $p$ be a node of the boundary divisor $D$ of an NC-completion $(X,D)$. 
 The exceptional curves of all iterative blowups at $p$ and at infinitesimally near nodes of the boundary divisor are called \emph{inner components at $p$}; 
 the components of $D$ together with the inner components at all nodes of $D$ are the \emph{inner components of $D$}.
 
 Let $Y$ be an open surface admitting a triangle completion.
 Then by \emph{inner components} of $Y$ we mean the inner components of $D$ for an arbitrary triangle completion $(X,D)$ of $Y$.
 This is well defined due to Remark~\ref{rm:inner-identification} below.
\end{definition}

\begin{remark}\label{rm:inner-identification}
  Let $\phi\colon(X_1,D_1)\dashrightarrow(X_2,D_2)$ be an inner birational map of NC-pairs.
  Then there is a natural bijection between the inner components of $D_1$ and the inner components of $D_2$.
  Indeed, let $\psi_i\colon(\hat X,\hat D)\to(X_i,D_i)$ be inner morphisms for $i=1,2$ such that $\psi=\psi_2\circ\psi_1^{-1}$ is a decomposition of $\psi$.
  Then the inner components of $D_i$ are exactly the components of $\hat D$ up to taking strict transforms, $i=1,2$.
  The uniqueness of the bijection can also be seen from the fact that inner components of $D_i$ correspond to (distinct) divisorial valuations of $Y:=X_1\setminus\supp D_1$.
\end{remark}

\begin{lemma}\label{lm:completion-unique}
  A $(0,0)$- or triangle completion of a surface $Y$ is uniquely determined, up to a unique isomorphism over $Y$, by the set of inner components of $Y$ contained in the boundary divisor.
\end{lemma}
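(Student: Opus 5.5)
Let $(X_1,D_1)$ and $(X_2,D_2)$ be two such completions of $Y$ (each a $(0,0)$- or triangle completion) whose boundary components are the same set $S$ of inner components of $Y$. We must show that the birational map $\phi\colon(X_1,D_1)\dashrightarrow(X_2,D_2)$ extending $\id_Y$ is an isomorphism. Uniqueness of the isomorphism is then automatic, since any isomorphism over $Y$ restricts to the identity on the dense open set $Y$.

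First we check that $\phi$ is inner. If $(X_i,D_i)$ is a triangle pair, contract one boundary component as in Remark~\ref{rm:triangle-vs-00}. This inner blowdown gives a $(0,0)$-pair, which is minimal. So we may compose $\phi$ with such contractions and reduce to a map between minimal $(0,0)$-pairs. Proposition~\ref{pr:00-completions-PZ}\ref{pr:00-completions-PZ:dominating-pair} then gives a decomposition into inner blowups, and hence the original $\phi$ is inner as well. Take a relatively minimal decomposition $\psi_i\colon(\hat X,\hat D)\to(X_i,D_i)$ of $\phi$ with both $\psi_i$ inner; it is obtained from any decomposition by repeatedly contracting $(-1)$-curves of $\hat D$ that are contracted by both maps. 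By Remark~\ref{rm:inner-identification}, the components of $\hat D$ are inner components of $Y$. Each $\psi_i$ contracts exactly those components of $\hat D$ that do not lie in $S$.

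The key step is that $\psi_1$ and $\psi_2$ contract the same set of curves, namely $\hat D\setminus S$. Suppose $\hat D\neq$ the strict transform of $D_1$, so $\psi_1$ is not an isomorphism. Then $\psi_1$ contracts a curve $E$, and we may take $E$ to be a $(-1)$-curve of $\hat D$ that $\psi_1$ contracts first. The same $E$ is contracted by $\psi_2$. This contradicts relative minimality, and so $\psi_1$ is an isomorphism. Symmetrically $\psi_2$ is an isomorphism, and therefore $\phi=\psi_2\circ\psi_1^{-1}$ is an isomorphism.

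The main obstacle is the reduction to an inner map at the start. Proposition~\ref{pr:00-completions-PZ} only applies to maps starting from a $(0,0)$-completion and ending at a minimal pair. We must therefore confirm that pre- and post-composing with the contractions of Remark~\ref{rm:triangle-vs-00} keeps a decomposition with both sides inner, which holds because those contractions are themselves inner blowdowns. A lesser point is that relatively minimal decompositions exist with both morphisms still inner. This follows by contracting common $(-1)$-curves one at a time: each such curve is a boundary component, so the contraction is an inner blowdown, since the curves being contracted are exceptional curves of inner blowups.
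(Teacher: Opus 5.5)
Your proof is correct and is essentially the paper's argument: take a relatively minimal inner decomposition, observe that the first $(-1)$-curve contracted by $\psi_1$ is not one of the common boundary components, so it is also contracted by $\psi_2$, and this contradicts relative minimality. Your preliminary check that $\phi$ is inner is a step the paper leaves implicit, since its proof simply starts from an inner map; you obtain it from the contractions of Remark~\ref{rm:triangle-vs-00} and Proposition~\ref{pr:00-completions-PZ}\ref{pr:00-completions-PZ:dominating-pair}, which is a reasonable addition.
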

\begin{proof}
  Let $\phi\colon(X_1,D_1)\dashrightarrow(X_2,D_2)$ be an inner birational map of completions of $Y$. Assume that $D_1$ and $D_2$ define the same set of inner components of $Y$.
  Take a relatively minimal decomposition $\phi=\psi_2\circ\psi_1^{-1}$, $\psi_i\colon(\hat X,\hat D)\to(X_i,D_i)$. 
  Then the $(-1)$-curve of $\Gamma(\hat D)$ first contracted by $\psi_1$ is also contracted by $\psi_2$, a contradiction with the relative minimality.
\end{proof}

Now we are ready to introduce the main combinatorial object of this section.

\begin{definition}\label{def:complex-T}
  Let $Y$ be a surface admitting a $(0,0)$-completion.
  The \emph{triangle complex} $T(Y)$ is the two-dimensional abstract simplicial complex defined as follows.
  The vertices of $T(Y)$ are the inner components of $Y$.
  A pair (resp.\ triple) of vertices is an \emph{edge} (resp.\ a \emph{triangle}) of $T(Y)$ if it is the set of inner components of $Y$ defined by the boundary components of a $(0,0)$-completion (resp.\ triangle completion) of $Y$.

  Note that, by Lemma~\ref{lm:completion-unique}, the edges (resp.\ triangles) of $T(Y)$ are in natural bijection with the $(0,0)$-completions (resp.\ triangle completions) of $Y$ up to isomorphism over $Y$.
\end{definition}

\begin{lemma}\label{lm:complex-T-basic}
  The following assertions hold.
  \begin{enumerate}[(i)]
  \item $T(Y)$ is a well-defined simplicial complex, i.e., every pair of vertices of a triangle of $T(Y)$ is an edge of $T(Y)$.\label{lm:complex-T-basic:simplicial}
  \item Every edge of $T(Y)$ is contained in exactly two triangles of $T(Y)$.\label{lm:complex-T-basic:two-triangles}
  \end{enumerate}
\end{lemma}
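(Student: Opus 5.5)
For part (i), take a triangle $\{L_1,L_2,L_3\}$ of $T(Y)$, realized by a triangle completion $(X,D)$ with $D=L_1+L_2+L_3$. By Remark~\ref{rm:triangle-vs-00}, contracting $L_1$ is an inner blowdown onto a $(0,0)$-pair $(X',D')$ whose boundary components are the images of $L_2$ and $L_3$. Since the contraction is inner, Remark~\ref{rm:inner-identification} identifies the components of $D'$ with the inner components $L_2,L_3$ of $Y$. Hence $\{L_2,L_3\}$ is an edge of $T(Y)$, and by symmetry so are the other two pairs. We also need every triangle and edge to lie inside the vertex set of inner components of $Y$. This holds because any two triangle or $(0,0)$-completions of $Y$ are related by an inner birational map: by Proposition~\ref{pr:00-completions-PZ}\ref{pr:00-completions-PZ:00}--\ref{pr:00-completions-PZ:dominating-pair}, after contracting one $(-1)$-curve the map decomposes into inner blowups. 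So the notion of inner component is independent of the chosen completion.

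For part (ii), take an edge $\{C_1,C_2\}$ realized by a $(0,0)$-completion $(X,D)$, $D=C_1+C_2$, with nodes $p,q$.

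\emph{Existence of two triangles.} By Remark~\ref{rm:triangle-vs-00}, blowing up $p$ (resp.\ $q$) gives triangle completions with boundary $C_1+C_2+E_p$ (resp.\ $C_1+C_2+E_q$). The exceptional curves $E_p$ and $E_q$ are distinct inner components, being inner components at different nodes (distinct divisorial valuations). By Lemma~\ref{lm:completion-unique}, these give two distinct triangles containing the edge.

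\emph{At most two triangles.} Suppose $(X',D')$ is a triangle completion with $D'=C_1+C_2+E$. Contracting $E$ gives a $(0,0)$-completion whose inner components are $\{C_1,C_2\}$. By Lemma~\ref{lm:completion-unique}, it is isomorphic over $Y$ to $(X,D)$. Thus $(X',D')$ is the blowup of $(X,D)$ at one of its nodes, the image point of $E$. This inverse operation is inner by Remark~\ref{rm:triangle-vs-00}, since $E$ meets both $C_1$ and $C_2$. So $E\in\{E_p,E_q\}$.

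The main subtle point is the identification of $E$ with $E_p$ or $E_q$ as inner components: the blowup is determined by its center, so the exceptional valuation coincides.
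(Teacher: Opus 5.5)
Your proposal is correct and follows the paper's proof essentially step by step. For (i) you contract one boundary component of a triangle completion (Remark~\ref{rm:triangle-vs-00}); for (ii) you blow up the two nodes of the $(0,0)$-completion for existence, and for the converse you contract the third component and apply Lemma~\ref{lm:completion-unique}. Your extra paragraph on why inner components do not depend on the chosen completion is not needed for this lemma, since the paper settles that point in Remark~\ref{rm:inner-identification} when defining the vertices, but it does no harm.
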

\begin{proof}
  \ref{lm:complex-T-basic:simplicial} The contraction of a boundary component of a triangle completion of $Y$ is a $(0,0)$-completion of $Y$, see Remark~\ref{rm:triangle-vs-00}, whose boundary defines the corresponding pair of vertices.

  \ref{lm:complex-T-basic:two-triangles} Let an edge of $T(Y)$ be defined by a $(0,0)$-completion $(X,D)$ of $Y$.
  The inner blowups of $(X,D)$ at its two nodes are triangle completions of $Y$ whose triangles contain the given edge, and their exceptional curves are distinct inner components of $Y$; this gives two distinct triangles.
  Conversely, let a triangle completion $(X',D')$ of $Y$ define a triangle containing the given edge.
  Then the contraction of the third boundary component of $D'$ is a $(0,0)$-completion defining the same edge, and by Lemma~\ref{lm:completion-unique} $(X',D')$ is isomorphic over $Y$ to the inner blowup of $(X,D)$ at one of its two nodes.
\end{proof}

The following lemma relates the finite triangulations of Remark~\ref{rm:canonical-triangulation} to the infinite complex $T(Y)$.

\begin{lemma}\label{lm:complex-T-realization}
  Let $(X,D)$ be a $(0,0)$-completion of $Y$.
  \begin{enumerate}[(i)]
  \item For every inner birational morphism $\psi\colon(\hat X,\hat D)\to(X,D)$, the triangulation $T(\hat D)$ is a finite subcomplex of $T(Y)$ under the identification of the components of $\hat D$ with inner components of $Y$.\label{lm:complex-T-realization:sub}
  \item Let $\phi\colon(X_1,D_1)\dashrightarrow(X_2,D_2)$ be a birational map of $(0,0)$-completions of $Y$ that is not an isomorphism and $\psi_i\colon(\hat X,\hat D)\to(X_i,D_i)$, $i=1,2$ be a relatively minimal decomposition $\phi=\psi_2\circ\psi_1^{-1}$.
  Then $T(\hat D)$ is a chain of triangles connecting the edges $e_1$ and $e_2$ of $T(Y)$ corresponding to $(X_1,D_1)$ and $(X_2,D_2)$ respectively, i.e., these edges belong to the end triangles of the chain.\label{lm:complex-T-realization:chain}
  \end{enumerate}
\end{lemma}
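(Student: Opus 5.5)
For part (i) the plan is induction on the number $n$ of inner blowups in a decomposition $\psi=\beta_1\circ\dots\circ\beta_n$, with intermediate pairs $(X^{(k)},D^{(k)})$ and $(X^{(0)},D^{(0)})=(X,D)$. We must show that $T(D^{(k)})$ is a subcomplex of $T(Y)$. This means each of its triangles is the set of boundary components of some triangle completion of $Y$, and each of its edges comes from some $(0,0)$-completion; vertices are automatically inner components of $Y$. We construct $T(D^{(k)})$ by Remark~\ref{rm:canonical-triangulation}, which is legitimate by Lemma~\ref{lm:triangulation-unique}. There is one subtlety for $k=0$: the $2$-cycle $\Gamma(D)$ is not triangulable. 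We therefore start at $k=1$, where $D^{(1)}$ is a triangle. By Remark~\ref{rm:triangle-vs-00}, $T(D^{(1)})$ is a triangle of $T(Y)$, and its edges are edges of $T(Y)$ by Lemma~\ref{lm:complex-T-basic}(i). If $n=0$, $T(\hat D)$ is understood as the single edge $e$ of $(X,D)$ (or the statement is vacuous).

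For the inductive step, let $\beta_{k+1}$ blow up the node $p=C\cap C'$ of $D^{(k)}$ with exceptional curve $E$. The new triangle $\{C,C',E\}$ is adjoined along the edge $\{C,C'\}$. We must realize $\{C,C',E\}$ as a triangle completion. The key step is the following claim: for every edge $\{C,C'\}$ of $T(D^{(k)})$ there is a $(0,0)$-completion of $Y$ with boundary $C+C'$ obtained from $(X^{(k)},D^{(k)})$ by contracting components. The claim rests on the standard fact that contracting a degree-$2$ vertex (weight $-1$) of a triangulation removes a leaf triangle, as in the proof of Lemma~\ref{lm:triangulation-unique}. We repeatedly contract $(-1)$-components lying on one side of the diagonal $\{C,C'\}$ of the disc. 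Each such contraction is an inner blowdown of an NC-pair, because a $(-1)$-curve of a cycle meets exactly two components, transversally. We continue until only $C$, $C'$ and the components on the other side remain, and then contract that side as well. At the end only $C,C'$ survive with weights $0,0$, because in a triangulation with a single edge each vertex has degree $1$. The result is a $(0,0)$-pair, hence a $(0,0)$-completion. Its node corresponding to $p$ is still present, since the contractions happen away from $p$. Blowing that node up gives a triangle completion with boundary $\{C,C',E\}$, by Remark~\ref{rm:triangle-vs-00}. The remaining edges $\{C,E\}$ and $\{C',E\}$ then lie in $T(Y)$ by Lemma~\ref{lm:complex-T-basic}(i). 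The identification of components with inner components of $Y$ is consistent throughout, by Remark~\ref{rm:inner-identification}.

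For part (ii), Proposition~\ref{pr:00-completions-PZ}(iv) gives that $\Gamma(\hat D)$ admits a triangulation that is a chain of triangles. By Lemma~\ref{lm:triangulation-unique} this triangulation equals $T(\hat D)$, which by (i), applied to $\psi_1$, is a subcomplex of $T(Y)$. It remains to locate $e_1$ and $e_2$. Under $\psi_1$, the strict transforms of the two components of $D_1$ survive in $\hat D$ and form an edge of $T(\hat D)$. This edge is the initial edge of the canonical triangulation of Remark~\ref{rm:canonical-triangulation} applied to $\psi_1$: the first blowup creates a triangle containing $e_1$, and later triangles are glued to other edges. So $e_1$ lies in some triangle of the chain, and likewise $e_2$ via $\psi_2$, using uniqueness of the triangulation again.

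The main obstacle is showing that these triangles are the end triangles. We need that the triangle containing $e_1$ is a leaf of the adjacency tree. If it were interior, both of its other edges would be shared. The side of the diagonal $e_1$ opposite to that triangle would then carry further triangles, so $\hat D$ would contain components on both sides of $e_1$. For $\psi_1$ this cannot happen, because all blowups from $(X_1,D_1)$ take place at nodes of the $2$-cycle, so the disc is split by $e_1$ into two sides. I would argue instead via relative minimality. If $e_1$ lay in a non-end triangle, then a leaf $(-1)$-vertex on each side would be contracted by both $\psi_1$ and $\psi_2$, since $\psi_2$ must contract everything except the two components of $e_2$. So I would try to show that some leaf vertex is distinct from the components of both $e_1$ and $e_2$, and that it is contractible first in both decompositions, contradicting relative minimality. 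This is the step whose details I would need to check most carefully.
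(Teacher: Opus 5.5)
Your part (i) is fine. It is more explicit than the paper's one-line induction: you justify why the re-added leaf triangle $\{C,C',E\}$ is a triangle of $T(Y)$, by contracting everything else down to a $(0,0)$-completion that still has the node $p$, and you correctly start the induction at the first triangle.

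The genuine gap is in part (ii). You never prove that $e_1$ and $e_2$ lie in the end triangles, and your first argument for it is wrong. Nothing about $\psi_1$ alone keeps $\hat D$ on one side of $e_1$. Suppose $\psi_1$ blows up both nodes of $D_1$. Then the blowup of the second node glues a triangle along $e_1$ itself, so ``later triangles are glued to other edges'' is false. In that case $e_1$ becomes a diagonal, and neither of its endpoints is a $(-1)$-vertex. Only relative minimality excludes this. Note also that relative minimality, as defined here, has no ``contractible first'' condition: it simply forbids a $(-1)$-curve of $\hat D$ that is contracted by both $\psi_1$ and $\psi_2$.

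The missing argument is short.
\begin{enumerate}[(1)]
\item A $(-1)$-vertex of $\Gamma(\hat D)$ that is not a strict transform of a component of $D_1$ or $D_2$ is contracted by both $\psi_1$ and $\psi_2$. Hence all degree-$2$ vertices of $T(\hat D)$ lie in $e_1\cup e_2$.
\item Suppose a degree-$2$ vertex $A$ lies in $e_i$. You did show that $e_i$ is an edge of $T(\hat D)$, and the only edges at $A$ are its two edges in the cycle $\Gamma(\hat D)$. So $e_i$ is a boundary edge of the disc and lies in the leaf triangle at $A$.
\item A boundary edge lies in exactly one triangle. So leaf triangles whose ear vertices lie in the same $e_i$ coincide.
\end{enumerate}
The leaf triangles are exactly the triangles at degree-$2$ vertices. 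Therefore the adjacency tree has at most two leaves, one containing $e_1$ and the other containing $e_2$, or it is a single triangle containing both. This is the paper's argument. It also gives the chain property directly, so you do not need Proposition~\ref{pr:00-completions-PZ}(iv).
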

\begin{proof}
  \ref{lm:complex-T-realization:sub}
  The first blowdown in a decomposition of $\psi$ contracts a $(-1)$-component of $\hat D$, and this removes the leaf triangle of $T(\hat D)$ at the corresponding vertex. 
  The claim then follows by induction on the number of blowups in $\psi$, the base case being a single inner blowup of $(X,D)$, i.e., a triangle completion of $Y$.

  \ref{lm:complex-T-realization:chain}
  By the relative minimality, the only $(-1)$-vertices of $\Gamma(\hat D)$ are strict transforms of components of $D_1$ and $D_2$. Since the leaf triangles of $T(\hat D)$ are exactly those spanned by a $(-1)$-vertex and its two neighbours, the adjacency tree of $T(\hat D)$ is a path whose end triangles contain the edges $e_1$ and $e_2$.
\end{proof}

\begin{lemma}\label{lm:complex-T-tree}
  The adjacency graph of triangles of $T(Y)$, in which two triangles are adjacent if they share an edge, is an infinite $3$-regular tree.
\end{lemma}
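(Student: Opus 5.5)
The proof will establish three properties of the adjacency graph $A$ of triangles of $T(Y)$: it is $3$-regular, it is connected, and it has no cycles. Infiniteness then follows, since a finite $3$-regular graph always contains a cycle.

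\emph{Regularity.} A triangle of $T(Y)$ has three edges, and by Lemma~\ref{lm:complex-T-basic}\ref{lm:complex-T-basic:two-triangles} each of these edges lies in exactly one further triangle. Two of these neighbours cannot coincide: a triangle sharing two edges with the given one would have the same three vertices. By Lemma~\ref{lm:completion-unique} it would then be the same triangle. So every vertex of $A$ has degree exactly $3$.

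\emph{Connectedness.} Take two triangles $\Delta_1,\Delta_2$ and choose edges $e_i\subset\Delta_i$. These correspond to $(0,0)$-completions $(X_i,D_i)$ of $Y$. If $e_1=e_2$, the two triangles are either equal or adjacent. Otherwise the induced birational map $\phi\colon(X_1,D_1)\dashrightarrow(X_2,D_2)$ is not an isomorphism, because by Lemma~\ref{lm:completion-unique} distinct edges give non-isomorphic completions. Take a relatively minimal decomposition of $\phi$; it is inner by Proposition~\ref{pr:00-completions-PZ}. Lemma~\ref{lm:complex-T-realization}\ref{lm:complex-T-realization:chain} then gives a chain of triangles inside $T(Y)$ whose end triangles contain $e_1$ and $e_2$. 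Consecutive triangles in this chain share an edge, so they are adjacent in $A$. Each end triangle shares $e_i$ with $\Delta_i$, so it either equals $\Delta_i$ or is adjacent to it. Hence $\Delta_1$ and $\Delta_2$ lie in the same component of $A$.

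\emph{No cycles.} This is the main step. Suppose $\Delta_0,\Delta_1,\dots,\Delta_n=\Delta_0$ is a cycle in $A$ with $n\ge3$ and all $\Delta_i$ distinct for $0\le i<n$. The idea is to realise a whole path of triangles as a single finite triangulation $T(\hat D)$. Start from a triangle completion realising $\Delta_0$ and contract one of its components to get a $(0,0)$-completion $(X,D)$ whose edge lies in $\Delta_0$. Moving from $\Delta_i$ to $\Delta_{i+1}$ across a shared edge is realised by one inner blowup or blowdown: contract the component of $\Delta_i$ opposite the shared edge, then blow up the other node of the resulting $(0,0)$-pair. Following the path $\Delta_0,\dots,\Delta_{n-1}$ and keeping every exceptional curve, one obtains inner morphisms $(\hat X,\hat D)\to(X,D)$ such that every triangle on the path appears in $T(\hat D)$. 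One has to check this really produces a single dominating pair, i.e.\ that one can always blow up rather than blow down. For this I would argue via Remark~\ref{rm:inner-identification}: inner components are valuations, so all the required curves can be realised simultaneously on a common inner blowup of $(X,D)$. With Remark~\ref{rm:canonical-triangulation} and Lemma~\ref{lm:complex-T-realization}\ref{lm:complex-T-realization:sub}, this shows that $T(\hat D)$ is a subcomplex of $T(Y)$ containing all the $\Delta_i$.

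Now $\Delta_{n-1}$ and $\Delta_0$ share an edge of $T(Y)$, and both triangles lie in $T(\hat D)$. The adjacency graph of $T(\hat D)$ is a tree by the definition of a triangulation, so we need the cycle to live inside it. The key point is that an edge of $T(Y)$ contained in a triangle of $T(\hat D)$ is either a boundary edge of $\Gamma(\hat D)$ or an interior edge of the disc. If it is interior, then both triangles of $T(Y)$ containing it, which by Lemma~\ref{lm:complex-T-basic}\ref{lm:complex-T-basic:two-triangles} are exactly two, already lie in $T(\hat D)$. If it is a boundary edge, then only one of its two triangles in $T(Y)$ lies in $T(\hat D)$. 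So within $T(\hat D)$, a cycle in $A$ would have to cross a boundary edge, which is impossible since only one triangle on that edge is present. Hence every adjacency among the $\Delta_i$ is an adjacency in the tree of $T(\hat D)$, which is a contradiction.

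The delicate point is making the common dominating pair rigorous. If that fails, I would fall back on the following argument. Take a minimal cycle and apply Lemma~\ref{lm:complex-T-realization}\ref{lm:complex-T-realization:chain} to the edges $e_1,e_2$ shared by $\Delta_0$ with its two cycle-neighbours. It yields a unique chain between them, namely the single triangle $\Delta_0$. Relative minimality makes this chain unique, and comparing it with the other arc of the cycle gives the contradiction.
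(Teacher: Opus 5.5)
Your regularity and connectedness arguments are essentially the paper's. The paper simply cites Lemmas~\ref{lm:complex-T-basic} and~\ref{lm:complex-T-realization} and the existence of inner birational maps between $(0,0)$-completions. Deriving infiniteness from acyclicity plus $3$-regularity is also fine.

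For acyclicity your route differs. The paper argues locally: crossing an edge replaces the opposite vertex by the exceptional curve of an inner blowup, and since inner components are distinct valuations, a non-backtracking path never returns. You instead embed the whole cycle into one finite triangulation $T(\hat D)\subset T(Y)$ and use that the adjacency graph of a triangulated disc is a tree. This costs you a common dominating pair. In return it makes explicit why a freshly created vertex cannot be one visited earlier, which the paper's one-line argument takes for granted.

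The step you call delicate is settled by your own interior/boundary observation, used inductively while building $\hat D$ rather than afterwards. Put $(\hat X_0,\hat D_0)=(X_{\Delta_0},D_{\Delta_0})$ and assume $\Delta_0,\dots,\Delta_i\in T(\hat D_i)$. Let $e$ be the common edge of $\Delta_i$ and $\Delta_{i+1}$.
\begin{enumerate}[(a)]
\item If $e$ is interior in $T(\hat D_i)$, the other triangle of $T(\hat D_i)$ on $e$ lies in $T(Y)$ and differs from $\Delta_i$. So it is $\Delta_{i+1}$ by Lemma~\ref{lm:complex-T-basic}\ref{lm:complex-T-basic:two-triangles}.
\item If $e$ is a boundary edge, its endpoints meet at a node of $\hat D_i$. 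Blowing up that node adds on $e$ a triangle of $T(Y)$ with a new third vertex (Remark~\ref{rm:canonical-triangulation}, Lemma~\ref{lm:complex-T-realization}\ref{lm:complex-T-realization:sub}), which again must be $\Delta_{i+1}$.
\end{enumerate}
So no blowdown is ever needed.

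By contrast, your appeal to valuations only puts the \emph{vertices} of the $\Delta_i$ on a common $\hat D$. That the $\Delta_i$ are then triangles of $T(\hat D)$ would need a further argument, e.g.\ that $\hat X$ dominates each $X_{\Delta_i}$ by an inner morphism, together with Lemma~\ref{lm:triangulation-unique}.

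Once all $\Delta_i$ lie in $T(\hat D)$, consecutive ones share an edge and hence are adjacent in its adjacency tree by definition, which gives the contradiction. The later boundary-edge discussion is then unnecessary, and so is the fallback argument, which is too vague to stand alone.
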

\begin{proof}
  It is clear from Lemmas~\ref{lm:complex-T-basic} and Lemma~\ref{lm:complex-T-realization} that the adjacency graph, say $A$, is infinite and $3$-regular. 
  Since any two $(0,0)$-completions are connected by an inner birational map, the graph $A$ is connected as well.

  Finally, passing to an adjacent triangle replaces the vertex opposite to the shared edge by the exceptional component of an inner blowup, see the proof of Lemma~\ref{lm:complex-T-basic}\ref{lm:complex-T-basic:two-triangles}. Since the inner components of $Y$ are pairwise distinct, a non-backtracking path in the adjacency graph never returns to its initial triangle, and the graph is a tree.
\end{proof}

\begin{notation}\label{nt:markings}
  \begin{enumerate}[(i)]
  \item Denote by $\PP(\ZZ^2)$ the set of pairs $\{v,-v\}$ of opposite primitive vectors in $\ZZ^2$.
  We will write a primitive vector of $\ZZ^2$, meaning the corresponding element of $\PP(\ZZ^2)$.\label{nt:markings:PP}
  \item Let $(X,D)$ be an NC-pair and $p$ be a node of $D$ that is the intersection of two distinct components $D_1$ and $D_2$ marked with vectors $v_1,v_2\in\ZZ^2$ that comprise a basis.
  Then we mark the inner components at $p$ iteratively: the exceptional component of the blowup at $p$ is marked by $v_1+v_2$, and we apply the same procedure at the two nodes of the exceptional component.
  In particular, the pair of markings at every node of every iterated inner blowup over $p$ is again a basis of $\ZZ^2$.\label{nt:markings:node}
  \item Let now $(X,D)$ be a $(0,0)$-completion of $Y$, where $D$ consists of components $D_1,D_2$ intersecting at the nodes $p$ and $p'$.
  Firstly, we mark all inner components at $p$ starting with $(1,0)$ for $D_1$ and $(0,1)$ for $D_2$.
  Secondly, we mark all inner components at $p'$ starting with $(1,0)$ for $D_1$ and $(0,-1)$ for $D_2$.
  The two markings of $D_2$ agree up to multiplication by $-1$, so we consider all markings as elements of $\PP(\ZZ^2)$.
  Thus, we will denote vertices of $T(Y)$ and the corresponding elements of $\PP(\ZZ^2)$ by the same letters.\label{nt:markings:00}
  \item Since a pair $u,v\in\ZZ^2$ is a basis if and only if all four pairs $\pm u,\pm v$ are, it is well defined whether a pair of elements of $\PP(\ZZ^2)$ is a basis of $\ZZ^2$.\label{nt:markings:basis}
  \end{enumerate}
\end{notation}

\begin{lemma}\label{lm:marking-bijection}
  The marking of Notation~\ref{nt:markings}\ref{nt:markings:00} is a bijection between the inner components of $D$ and $\PP(\ZZ^2)$.
\end{lemma}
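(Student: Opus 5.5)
The marking at each node is the Stern–Brocot (Farey) construction, so I will reduce the lemma to the classical fact that every primitive vector of $\ZZ^2$ in an open quadrant appears exactly once in the Stern–Brocot tree.

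First I fix the setup. The inner components of $D$ are $D_1$, $D_2$, the inner components at $p$, and the inner components at $p'$. These sets are disjoint, since exceptional curves over different nodes are distinct divisorial valuations (Remark~\ref{rm:inner-identification}).

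Consider the node $p$, with initial basis $(1,0),(0,1)$. By induction along the iterated inner blowups, each node is the intersection of two components marked by a basis $\{u,v\}$ with nonnegative coordinates and $\det(u,v)=\pm1$. The new component is marked $u+v$. The set of inner components at $p$ is in bijection with the vertices of the infinite binary tree of iterated blowups. Correspondingly, the markings run through the Stern–Brocot tree generated from the pair $(1,0),(0,1)$.

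I then invoke, or reprove, the Stern–Brocot property. Every primitive $(a,b)$ with $a,b>0$ occurs exactly once. For existence, descend from the basis $\{u,v\}$: since $u,v$ is a basis, write $(a,b)=\alpha u+\beta v$ with $\alpha,\beta\ge0$ integers. If $\alpha=0$ or $\beta=0$, primitivity forces the target to equal $u$ or $v$. Otherwise replace $(u,v)$ by $(u+v,v)$ or $(u,u+v)$, according to whether $\alpha\ge\beta$ or not; this keeps nonnegative coefficients. The quantity $\alpha+\beta$ strictly decreases, so the descent terminates at the target. For uniqueness, the open cones spanned by $u$ and $v$ at distinct nodes of the same depth are disjoint. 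A new vector lies in the interior of the cone of its node, which is subdivided, so it cannot appear at any other place.

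Hence the inner components at $p$, other than $D_1$ and $D_2$, correspond bijectively to the primitive vectors with both coordinates positive. Similarly, the inner components at $p'$ correspond bijectively to the primitive vectors in the quadrant $a>0$, $b<0$. Together with $(1,0)$ and $(0,\pm1)$, these classes cover $\PP(\ZZ^2)$ exactly once. Indeed, every class $\{v,-v\}$ not on an axis has a unique representative with first coordinate positive, and that representative lies in exactly one of the two open quadrants.

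The main obstacle is the uniqueness in the Stern–Brocot step, which I would handle with the nested-cones argument above. Everything else is bookkeeping of the sign identification at $p'$.
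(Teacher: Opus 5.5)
Your proof is correct, but it takes a different route from the paper's.

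\textbf{The paper's route.} The paper works bottom-up, by induction on $a+b$. For $a+b\ge3$ it shows that the only way to write $(a,b)=u+v$, with $u,v$ a basis in the closed positive quadrant, is $\{u,v\}=\{w,(a,b)-w\}$. Here $w$ is the unique solution of $\det(w,(a,b))=1$ with $0\le w\le(a,b)$, found from the coset $w+\ZZ(a,b)$ of all solutions. So each primitive $(a,b)$ has exactly one possible ``parent node'', and existence and uniqueness both follow from the induction hypothesis applied to the two parent markings.

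\textbf{Your route.} You work top-down. Existence is the Euclidean algorithm run along the blowup tree, and it explicitly produces the chain of nodes ending at $(a,b)$. Uniqueness comes from the fact that the cones at a fixed depth tile the quadrant with disjoint interiors.

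\textbf{What each buys.} Your existence step is fully constructive. In particular, you never need to argue that the components marked $w$ and $(a,b)-w$ eventually meet at a node; the paper's existence step takes this from the induction. The cost is a separate small induction for the tiling claim, which you only sketch. It holds because each cone $C(u,v)$ is split along the ray through $u+v$, and that ray lies in the interior since $u,v$ are linearly independent. The paper's route is purely arithmetic and needs no cone geometry.

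\textbf{A slip to fix.} In your descent step the two cases are swapped. Since
$\alpha u+\beta v=(\alpha-\beta)u+\beta(u+v)=\alpha(u+v)+(\beta-\alpha)v$,
you must pass to $(u,u+v)$ when $\alpha\ge\beta$, and to $(u+v,v)$ when $\beta\ge\alpha$. As written, the coefficients become negative. With this correction the new coefficient sum is $\max(\alpha,\beta)<\alpha+\beta$, and the argument goes through.
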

\begin{proof}
  The components $D_1$ and $D_2$ are marked by $(1,0)$ and $(0,1)$.
  We claim that the inner components at $p$ are marked bijectively by the classes $\pm(a,b)$ with $a,b>0$.
  Then, symmetrically, the inner components at $p'$ are marked bijectively by the classes $\pm(a,b)$ with $a>0>b$, and the assertion follows, since any primitive vector with a zero coordinate is equal to $\pm(1,0)$ or $\pm(0,1)$.

  The markings at $p$ of any inner component and of the two components meeting it at a node form a basis of $\ZZ^2$ lying in the closed positive quadrant.
  We show by induction on $a+b$ that any primitive vector $(a,b)$ with $a,b>0$ is the marking of exactly one inner component at $p$.
  The base is $(1,1)$, which marks exactly the exceptional component of the blowup at $p$.
  For $a+b\geq3$ we have $a\neq b$, and $(a,b)=u+v$ for a basis $u,v$ of $\ZZ^2$ in the closed positive quadrant if and only if $\{u,v\}=\{(a,b)-w,w\}$, where $w$ is the unique vector satisfying $0\leq w\leq (a,b)$ coordinatewise and $\det(w,(a,b))=1$.
  Such $w$ exists and is unique: since $\gcd(a,b)=1$, the solutions $w=(x,y)$ of $xb-ya=1$ form a coset of $\ZZ(a,b)$ in $\ZZ^2$, so exactly one of them satisfies $1\leq x\leq a$, and for it $y=\frac{xb-1}{a}\in\left[\frac{b-1}{a},\,b-\frac1a\right]\subseteq[0,b)$.
  Hence there is exactly one node whose blowup produces the marking $(a,b)$, and the components meeting at this node are marked by $w$ and $(a,b)-w$ with smaller coordinate sums.
  By the induction hypothesis this node exists and is unique.
\end{proof}

We depict the simplicial complex $T(Y)$ marked by $\PP(\ZZ^2)$ and the adjacency tree of triangles in Fig.~\ref{fig:tree}.

\begin{figure}
\begin{tikzpicture}[scale=1.2,  every node/.style={font=\footnotesize}]
\def\L{2}
\pgfmathsetmacro{\h}{\L * sqrt(3)/2} 
\tikzset{vertex/.style={rectangle, fill=white, inner sep=1pt, fill opacity=0.8, text opacity=1}} 

\newcommand{\setcoordinates}[2]{%
    \pgfmathtruncatemacro{\up}{mod(#1 + #2, 2)}
    \ifnum\up=0
        \pgfmathsetmacro{\x}{#2 * \h}
        \pgfmathsetmacro{\y}{#1 * \L / 2}
        \coordinate (A#1#2) at (\x,\y);
        \coordinate (C#1#2) at ($(A#1#2)+(\h,\L/2)$);
    \else
        \pgfmathsetmacro{\x}{#2 * \h + \h}
        \pgfmathsetmacro{\y}{#1 * \L / 2}
        \coordinate (A#1#2) at (\x,\y);
        \coordinate (C#1#2) at ($(A#1#2)+(-\h,\L/2)$);
    \fi
    \coordinate (B#1#2) at ($(A#1#2)+(0,\L)$);
    \coordinate (T#1#2) at ($ (B#1#2)!0.333!(A#1#2) + (B#1#2)!0.333!(C#1#2) -(B#1#2) $);
}
\newcommand{\drawtriangle}[2]{%
    \setcoordinates{#1}{#2}
    \draw[gray, thin, -latex] (A#1#2) -- (B#1#2) -- (C#1#2) -- cycle;
}

\newcommand{\drawtreeedge}[4]{%
    \setcoordinates{#1}{#2}
    \setcoordinates{#3}{#4}
    \draw[red] (T#1#2) -- (T#3#4);
}

\newcommand{\drawtreeedgedashed}[4]{%
    \setcoordinates{#1}{#2}
    \setcoordinates{#3}{#4}
    \coordinate (end) at ($(T#1#2)!0.666!(T#3#4)$);
    \draw[red,dashed] (T#1#2) -- (end);
}

\drawtriangle{0}{0}
\drawtriangle{1}{0}
\drawtriangle{2}{0}
\drawtriangle{0}{-1}
\drawtriangle{2}{-1}
 
\drawtriangle{1}{1}
 \drawtriangle{2}{1}
 \drawtriangle{3}{1}
 \drawtriangle{0}{1}
 \drawtriangle{-1}{1}
 \drawtriangle{2}{2}
 \drawtriangle{0}{2}
 \draw[black, thick, -latex] (A11) -- (B11) -- cycle;

\node[vertex] at (A11) {$(0,1)$};
\node[vertex] at (B11) {$(1,0)$};
\node[vertex] at (C11) {$(1,1)$};
\node[vertex] at (C10) {$(1,-1)$};
\node[vertex] at (A00) {$(1,-2)$};
\node[vertex] at (B20) {$(2,-1)$};
\node[vertex] at (C0-1) {$(2,-3)$};
\node[vertex] at (C2-1) {$(3,-2)$};

\node[vertex] at (A-11) {$(1,3)$};
\node[vertex] at (A01) {$(1,2)$};
\node[vertex] at (B21) {$(2,1)$};
\node[vertex] at (B31) {$(3,1)$};
\node[vertex] at (C22) {$(3,2)$};
\node[vertex] at (C02) {$(2,3)$};

\drawtreeedge{0}{0}{1}{0}
 \drawtreeedge{1}{0}{2}{0}
 \drawtreeedge{1}{0}{1}{1}

 \drawtreeedge{0}{0}{0}{-1}
 \drawtreeedge{2}{-1}{2}{0}

 \drawtreeedge{2}{1}{1}{1}
 \drawtreeedge{1}{1}{0}{1}

\drawtreeedge{2}{1}{3}{1}
 \drawtreeedge{2}{1}{2}{2}

  \drawtreeedge{-1}{1}{0}{1}
 \drawtreeedge{0}{2}{0}{1}

\drawtreeedgedashed{0}{-1}{-1}{-1}
\drawtreeedgedashed{0}{-1}{1}{-1}
\drawtreeedgedashed{2}{-1}{1}{-1}
\drawtreeedgedashed{2}{-1}{3}{-1}

\drawtreeedgedashed{0}{0}{-1}{0}
\drawtreeedgedashed{2}{0}{3}{0}

\drawtreeedgedashed{-1}{1}{-2}{1}
\drawtreeedgedashed{-1}{1}{-1}{0}
\drawtreeedgedashed{3}{1}{4}{1}
\drawtreeedgedashed{3}{1}{3}{0}

\drawtreeedgedashed{2}{2}{3}{2}
\drawtreeedgedashed{2}{2}{1}{2}
\drawtreeedgedashed{0}{2}{-1}{2}
\drawtreeedgedashed{0}{2}{1}{2}

\end{tikzpicture}
\caption{The simplicial complex $T(Y)$ depicted in black.
Each vertex is an inner component of $Y$ marked by an element of $\PP(\ZZ^2)$.
The adjacency tree of triangles is depicted in red. 
The starting $(0,0)$-completion is emphasized by a thick edge.\label{fig:tree}}  
\end{figure}

\begin{figure}
\begin{tikzpicture}[scale=1.5]
    \draw[->] (-0.5, 0) -- (5, 0) node[right] {$x$};
    \draw[->] (0, -0.5) -- (0, 5) node[above] {$y$};

    \foreach \x in {0,1,...,4} {
        \foreach \y in {0,1,...,4} {
            \fill (\x,\y) circle (2pt);
            \node[above right, font=\tiny] at (\x,\y) {$(\x,\y)$};
        }
    }

    \def\e{5}
    \foreach \a in {0,...,\e} {
        \foreach \b in {0,...,\e} {
            \foreach \c in {0,...,\e} {
                \foreach \d in {0,...,\e} {
                    \pgfmathparse{(\a == \c && \b == \d) ? 1 : 0}
                    \ifnum\pgfmathresult=0
                        \pgfmathparse{int(\a*\d - \b*\c)}
                        \ifnum\pgfmathresult=1
                            \draw[gray, thick] (\a,\b) -- (\c,\d);
                        \fi
                    \fi
                }
            }
        }
    }
\end{tikzpicture}
\caption{The positive quadrant of $\ZZ^2$ tiled by triangles of the right half of $T(Y)$.\label{fig:quadrant}}  
\end{figure}

\begin{lemma}\label{lm:markings}
  Let $T(Y)$ be marked. Then the following assertions hold.
  \begin{enumerate}[(i)]
  \item Given an edge $(u,v)$ in $T(Y)$, the opposite vertices of adjacent triangles are $u+v,u-v$.
  \item The markings of $T(Y)$ are uniquely determined by markings of any triangle.
  \item Let us take an edge $(u,v)$ in $T(Y)$ and construct a new marking of $T(Y)$ by starting with $(1,0)$ and $(0,1)$ for $u$ and $v$ respectively.
  Then there exists a unique $g\in\PGL(2,\ZZ)$ such that for any vertex with the new marking $m$ the original marking equals $gm$.
  In particular, $g$ is the class of a matrix whose columns represent $u$ and $v$.
 \end{enumerate}
\end{lemma}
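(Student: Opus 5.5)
We prove the three items in order, each resting on the previous one.

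For (i), we work inside the marking of Notation~\ref{nt:markings}. An edge $\{u,v\}$ of $T(Y)$ is given by a $(0,0)$-completion $(X',D')$ whose components are marked $u,v$. By Lemma~\ref{lm:complex-T-basic}\ref{lm:complex-T-basic:two-triangles}, the two triangles containing this edge are the inner blowups of $(X',D')$ at its two nodes $q,q'$. By Lemma~\ref{lm:complex-T-realization}\ref{lm:complex-T-realization:sub} and Lemma~\ref{lm:completion-unique}, the components of $D'$ are inner components of $D$ obtained through some sequence of inner blowups of $(X,D)$. We then use the recursive rule: the component created by blowing up a node where components marked $w_1,w_2$ meet is marked $\pm w_1\pm w_2$, with signs compatible with the chosen representatives.

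In the neighbouring triangles, each opposite vertex is the exceptional curve of a blowup at a node between the components marked $u$ and $v$. It is therefore marked $u+v$ or $u-v$ for suitable representatives. The two exceptional curves are distinct, and the marking is a bijection by Lemma~\ref{lm:marking-bijection}. Since $u+v\neq\pm(u-v)$ when $u,v$ form a basis, the two opposite vertices are exactly $u+v$ and $u-v$.

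The main obstacle is making this rigorous in general. When $(X',D')$ is not the original completion, its two nodes lie over different parts of $D$, and one must check that the local marking rule still applies at both nodes $q,q'$. I would handle this by induction along the adjacency tree of Lemma~\ref{lm:complex-T-tree}, starting from the initial edge $\{(1,0),(0,1)\}$. At this initial edge, (i) holds by the definition in Notation~\ref{nt:markings}\ref{nt:markings:00}: the opposite vertices are $(1,1)$ and $(1,-1)$.

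For the inductive step, take a triangle $\{u,v,u+v\}$ created by blowing up a node. Two of its edges, $\{u,u+v\}$ and $\{v,u+v\}$, are new. For $\{u,u+v\}$, one adjacent triangle has third vertex $v=(u+v)-u$. The other is the blowup at the node of $u$ and $u+v$, so its third vertex is marked $2u+v=u+(u+v)$ by Notation~\ref{nt:markings}\ref{nt:markings:node}. The edge $\{v,u+v\}$ is handled symmetrically. Since the tree is connected, every edge is eventually reached, and (i) holds for all edges.

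For (ii), the adjacency graph of triangles is a connected tree by Lemma~\ref{lm:complex-T-tree}. Once the markings of one triangle are fixed, (i) determines the new vertex of each adjacent triangle. Here there is a subtlety: given two vertices $u,v$ and a third vertex $w\in\{u\pm v\}$ of one triangle, the other triangle on the edge $\{u,v\}$ is forced to have third vertex $u\mp v$. Propagating along the tree then determines every vertex marking.

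For (iii), let $g$ be the class of the matrix with columns representing $u$ and $v$. Since $\{u,v\}$ is a basis, $g\in\PGL(2,\ZZ)$. The map $m\mapsto gm$ is linear, so it preserves the rule of (i): if $m_1,m_2$ is an edge, then $g(m_1\pm m_2)=gm_1\pm gm_2$. Consider a triangle containing the edge $\{u,v\}$. The new marking of its third vertex is $(1,\pm1)$, and the original marking is $u\pm v$, which is $g(1,\pm1)$ with the sign chosen so that the original sign matches. If this sign does not match, compose $g$ with $\mathrm{diag}(1,-1)$, which fixes $(1,0)$ and $(0,1)$ in $\PP(\ZZ^2)$; equivalently, replace $v$ by $-v$, which is still the same column up to sign. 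Then $m\mapsto gm$ agrees with the original marking on one full triangle, so by (ii) the two markings agree everywhere.

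Uniqueness follows because an element of $\PGL(2,\ZZ)$ is determined by the images of $(1,0)$, $(0,1)$ and $(1,1)$ in $\PP(\ZZ^2)$. These three images fix the columns up to a common sign.
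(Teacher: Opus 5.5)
Your proposal is correct and follows the paper's proof. For (i), both arguments rest on every triangle having vertices $m_1,m_2,m_1+m_2$ for a basis $m_1,m_2$. Your induction along the adjacency tree identifies the second triangle on each new edge via Lemma~\ref{lm:complex-T-realization}\ref{lm:complex-T-realization:sub} and Lemma~\ref{lm:complex-T-basic}\ref{lm:complex-T-basic:two-triangles}, which just spells out what the paper asserts in one line before concluding from the distinctness of $u+v$ and $u-v$. Part (ii) is propagation along the connected tree. Part (iii) takes the matrix of representatives normalized on the third vertex, uses linearity and (ii), and your uniqueness argument via the images of $(1,0)$, $(0,1)$, $(1,1)$ is equivalent to the paper's appeal to surjectivity of the new marking.
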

\begin{proof}
  (i) Every triangle of $T(Y)$ has vertices marked by $m_1$, $m_2$, and $m_1+m_2$ for suitable representatives $m_1,m_2\in\ZZ^2$ comprising a basis.
  Thus each of the two vertices opposite to the edge $(u,v)$ equals $\pm u\pm v$, i.e., $u+v$ or $u-v$ in $\PP(\ZZ^2)$; these two classes are distinct, whence the assertion.

  (ii) By (i), the markings of a triangle determine the markings of the vertices opposite to its edges, i.e., of the adjacent triangles.
  Since the adjacency tree of triangles is connected and every vertex of $T(Y)$ belongs to a triangle, the markings of a single triangle determine the markings of all vertices.

  (iii) The new marking $\nu$ is Notation~\ref{nt:markings}\ref{nt:markings:00} applied to the $(0,0)$-completion corresponding to $(u,v)$, so assertion (i) and Lemma~\ref{lm:marking-bijection} hold for $\nu$ as well; denote the original marking by $\mu$.
  Let $o$ be the vertex with $\nu(o)=(1,1)$; by (i) we may choose representatives $m_u,m_v\in\ZZ^2$ of $\mu(u),\mu(v)$ with $\mu(o)=m_u+m_v$, and we let $g\in\PGL(2,\ZZ)$ be the class of the matrix with columns $m_u$ and $m_v$.
  Since $g$ is linear, the labeling $g\nu$ satisfies (i), and it agrees with $\mu$ on the triangle $(u,v,o)$; hence $\mu=g\nu$ by the reconstruction of (ii).
  Finally, if $g'\nu=g\nu$, then $g^{-1}g'$ fixes every element of $\PP(\ZZ^2)$, since $\nu$ is surjective; hence it is the identity of $\PGL(2,\ZZ)$ and $g=g'$.
\end{proof}
 
\begin{remark}\label{rm:elementary-moves}
  Let $(u,v)$ be an edge of the marked complex $T(Y)$.
  By Lemma~\ref{lm:markings}(i), the vertices opposite to it in the two adjacent triangles are $u+v$ and $u-v$.
  \begin{enumerate}[(i)]
  \item The \emph{elementary transformations} of $\ZZ^2$ mapping $v\mapsto v\pm u$ and fixing $u$, resp.\ mapping $u\mapsto u\pm v$ and fixing $v$, are given in the basis $(u,v)$ by the matrices
  \[
  \begin{pmatrix}1&0\\\pm1&1\end{pmatrix},
  \qquad
  \begin{pmatrix}1&\pm1\\0&1\end{pmatrix}
  \in\PSL(2,\ZZ).
  \]
  As automorphisms of $T(Y)$ they map the triangle $(u,v,u+v)$ to one of four adjacent ones, keeping the orientation of $T(Y)$.\label{rm:elementary-moves:elementary}
  \item There are two possibilities for the new marking in Lemma~\ref{lm:markings}(iii) depending on the choice of nodes of the boundary divisor of the $(0,0)$-completion by components $u,v$; the corresponding elements $g,g'\in \PGL(2,\ZZ)$ differ by right multiplication by $\left(\begin{smallmatrix}1&0\\0&-1\end{smallmatrix}\right)$, and exactly one of them belongs to $\PSL(2,\ZZ)$.
  The two markings differ by the mirror symmetry of $T(Y)$ relative to the edge $(u,v)$, which swaps the opposite vertices $u+v$ and $u-v$.\label{rm:elementary-moves:symmetry}
  \end{enumerate}
\end{remark}

\begin{lemma}\label{lm:basis-quadrant}
    If $u,v$ is a basis of $\ZZ^2$, then either $\{u,v\}$ or $\{u,-v\}$ belong to the same closed quadrant of $\ZZ^2$.
\end{lemma}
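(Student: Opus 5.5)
Write $u=(a,b)$ and $v=(c,d)$ with $ad-bc=\pm1$. Replacing $v$ by $-v$ changes only the sign of the determinant, and replacing $u$ by $-u$ moves it to the opposite quadrant. Neither change affects the conclusion, since each element of $\PP(\ZZ^2)$ stands for both $w$ and $-w$. So the claim to prove is this: for suitable signs $\epsilon,\delta\in\{\pm1\}$, the vectors $\epsilon u$ and $\delta v$ lie in a common closed quadrant. Equivalently, after possibly negating $v$, the vectors $u$ and $v$ satisfy $ac\ge 0$ and $bd\ge 0$. In that case $u$ and $v$ (or $-u$ and $-v$) lie in the same closed quadrant, taking care when some coordinate is zero.

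First I treat the degenerate cases. If some coordinate vanishes, say $a=0$, then $bc=\mp1$, so $b=\pm1$ and $c=\pm1$. I choose the sign of $v$ so that $bd\ge0$. Then $ac=0\ge0$ holds automatically, and both conditions are satisfied. The cases $b=0$, $c=0$ and $d=0$ are symmetric.

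Now suppose all four coordinates are nonzero. I choose the sign of $v$ so that $ac>0$ and must show that $bd>0$. Suppose instead that $bd<0$. Then $ad$ and $bc$ have opposite signs: $ac>0$ means $a$ and $c$ share a sign, while $b$ and $d$ differ in sign. Hence
\[
|ad-bc|=|ad|+|bc|\ge 2,
\]
which contradicts $|ad-bc|=1$. So $bd>0$ as well.

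Once $ac\ge0$ and $bd\ge0$ hold, the sign patterns of $u$ and $v$ agree coordinatewise wherever both entries are nonzero. A zero coordinate lies in both adjacent closed quadrants, so $u$ and $v$ lie in one common closed quadrant. If $ac\ge0$ and $bd\ge0$ hold only after replacing $v$ by $-v$, then $\{u,-v\}$ lies in one closed quadrant, which is the second alternative of the statement.

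The only real step is the determinant inequality $|ad|+|bc|\ge2$ under mixed signs; the rest is bookkeeping. The main obstacle is just handling the zero-coordinate cases cleanly, which the degenerate-case check above does.
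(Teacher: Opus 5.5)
Your proof is correct and follows essentially the same route as the paper. The paper also checks the zero-coordinate cases directly. In the remaining case it normalizes, up to swapping the vectors and changing signs, to $u=(a,b)$, $v=(-c,d)$ with $a,b,c,d>0$, and obtains $|\det(u,v)|=ad+bc\geq 2$. Your choice of sign of $v$ with $ac>0$, giving $|ad-bc|=|ad|+|bc|\geq 2$ when $bd<0$, is the same determinant argument with the sign bookkeeping made explicit.
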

\begin{proof}
  If some coordinate of $u$ or $v$ vanishes, the assertion is checked directly.
  Otherwise, assume the contrary; then, up to swapping the vectors and multiplying them by $-1$, we have $u=(a,b), v=(-c,d)$ for $a,b,c,d>0$.
  In this case $\det\left(\begin{smallmatrix}a&b\\-c&d\end{smallmatrix}\right)=ad+bc\ge2$, and the pair $u,v$ is not a basis of $\ZZ^2$.
\end{proof}

\begin{corollary}\label{cr:basis-edge}
   A pair of primitive vectors in $\ZZ^2$ is a basis if and only if the corresponding vertices of $T(Y)$ are adjacent.
\end{corollary}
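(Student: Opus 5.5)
One direction is immediate from the construction of the markings. If two vertices of $T(Y)$ are adjacent, the edge joining them lies in some triangle by Lemma~\ref{lm:complex-T-basic}\ref{lm:complex-T-basic:two-triangles}. By Lemma~\ref{lm:markings}(iii) we may re-mark $T(Y)$ starting from this edge, so that its endpoints receive the new markings $(1,0)$ and $(0,1)$. The original markings are then the images of these under some $g\in\PGL(2,\ZZ)$, namely the columns of a matrix representing $g$. These columns form a basis of $\ZZ^2$, and by Notation~\ref{nt:markings}\ref{nt:markings:basis} this property does not depend on the choice of signs. Alternatively, one can note that every node in Notation~\ref{nt:markings}\ref{nt:markings:node} is marked by a basis, and that edges of $T(Y)$ arise from nodes of the boundaries of $(0,0)$-completions.

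For the converse, let $u,v$ be primitive vectors forming a basis. By Lemma~\ref{lm:basis-quadrant}, after replacing $v$ by $-v$ (which does not change the element of $\PP(\ZZ^2)$), we may assume that $u$ and $v$ lie in a common closed quadrant. I split into cases according to which quadrant this is, using the markings of Notation~\ref{nt:markings}\ref{nt:markings:00} relative to a fixed $(0,0)$-completion with components $D_1=(1,0)$ and $D_2=(0,1)$. Up to an overall sign, the closed quadrant is either the first one ($a,b\ge0$) or the fourth one ($a\ge0\ge b$); the two cases are symmetric via the node $p'$, so it suffices to treat the first. There the marking restricts to a bijection between inner components at $p$ together with $D_1,D_2$ and the primitive vectors of the closed positive quadrant, by the proof of Lemma~\ref{lm:marking-bijection}.

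The key step is to show that any basis $u,v$ in the closed positive quadrant is the pair of markings of the two components at some node of an iterated inner blowup over $p$. I will prove this by induction on the sum of all four coordinates. If $\{u,v\}=\{(1,0),(0,1)\}$, this is the node $p$ itself. Otherwise, say $\det(u,v)=\pm1$ with both vectors nonnegative, the larger one, say $v$, satisfies $v-u\ge0$ coordinatewise. I expect this step to be the main obstacle: it is a Euclidean-algorithm-type claim, namely that the vector of larger coordinate sum dominates the other coordinatewise. It follows from the same estimate used in Lemma~\ref{lm:basis-quadrant}, since if $v-u$ had coordinates of opposite signs, the determinant would be too large. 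Then $\{u,v-u\}$ is again a basis in the closed quadrant with a smaller coordinate sum, so by induction it is a node. Blowing up that node produces the component $v$, adjacent to $u$ by Notation~\ref{nt:markings}\ref{nt:markings:node}. Therefore $u$ and $v$ meet at a node of an inner completion, and by Lemma~\ref{lm:complex-T-realization}\ref{lm:complex-T-realization:sub} they span an edge of $T(Y)$.
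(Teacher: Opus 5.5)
Your proof is correct. The easy direction matches the paper, but the converse takes a genuinely different route.

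\textbf{The paper's argument.} It proceeds by contraposition. For non-adjacent $u,v$ it takes a shortest path $\Delta_0,\dots,\Delta_n$ in the adjacency tree of triangles, from a triangle containing $u$ to one containing $v$. It then re-marks $T(Y)$ via Lemma~\ref{lm:markings}(iii) so that the edge $\Delta_0\cap\Delta_1$ becomes $((1,0),(0,1))$ and $u=(1,-1)$. Since the path cannot recross that edge, $\pm v$ must lie in the open positive quadrant, and Lemma~\ref{lm:basis-quadrant} concludes.

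\textbf{Your argument.} You argue directly and constructively. Lemma~\ref{lm:basis-quadrant} puts $u,v$ in a common closed quadrant, served by the node $p$ or $p'$. A Euclidean-algorithm descent $\{u,v\}\to\{u,v-u\}$ then reaches $\{(1,0),(0,1)\}$. This realizes the pair as two components meeting at a node of an explicit iterated inner blowup, and Lemma~\ref{lm:complex-T-realization}\ref{lm:complex-T-realization:sub} finishes, since boundary edges of $T(\hat D)$ are edges of $T(Y)$.

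Your dominance step holds. Write $u=(a,b)$, $v=(c,d)$, and suppose $v-u$ had strictly opposite-signed coordinates, e.g.\ $c>a$ and $d<b$. Then $|\det(u,v)|=bc-ad\ge a+d+1$, which equals $1$ only for $\{u,v\}=\{(0,1),(1,0)\}$, your base case; the other sign pattern is symmetric. The same observation shows that equal coordinate sums occur only in the base case, so ``the larger one'' is well defined.

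\textbf{Comparison.} Your route is essentially the pair version of the induction in Lemma~\ref{lm:marking-bijection}. It avoids the adjacency tree and the re-marking entirely, at the cost of a case split over the two quadrants. The paper's route is shorter given Lemmas~\ref{lm:complex-T-tree} and~\ref{lm:markings}. However, it leans on a somewhat implicit identification: the two sides of an edge of $T(Y)$ correspond to the inner components at the two nodes of the corresponding $(0,0)$-completion, i.e.\ to the two quadrants.

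In your first direction, the appeal to Lemma~\ref{lm:complex-T-basic}\ref{lm:complex-T-basic:two-triangles} is unnecessary, since Lemma~\ref{lm:markings}(iii) only needs the edge.
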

\begin{proof}
    If two vertices are adjacent, then they belong to a triangle of $T(Y)$, whose vertices are marked by $m_1$, $m_2$, $m_1+m_2$ for suitable representatives $m_1,m_2\in\ZZ^2$ comprising a basis, see the proof of Lemma~\ref{lm:markings}(i); any two of these three vectors comprise a basis of $\ZZ^2$.

    Let $u,v$ be non-adjacent vertices.
    Since no triangle of $T(Y)$ contains both, a shortest path $\Delta_0,\ldots,\Delta_n$ in the adjacency tree of triangles with $u\in\Delta_0$ and $v\in\Delta_n$ has length $n\geq1$.
    Denote by $e$ the common edge of $\Delta_0$ and $\Delta_1$; by minimality $u,v\notin e$, so $u$ is the vertex of $\Delta_0$ opposite to $e$.
    By Lemma~\ref{lm:markings}(iii) and Remark~\ref{rm:elementary-moves}\ref{rm:elementary-moves:symmetry} we may change the marking so that $e=((1,0),(0,1))$ and $u=(1,-1)$.
    Now, $v$ or $-v$ lies in the positive quadrant of $\ZZ^2$, since otherwise the path $\Delta_0,\ldots,\Delta_n$ would cross the edge $e$ again, which is impossible by the minimality of the path.
     Hence neither $\{u,v\}$ nor $\{u,-v\}$ lies in a common closed quadrant, and $u,v$ is not a basis by Lemma~\ref{lm:basis-quadrant}.
\end{proof}

\begin{corollary}\label{cr:aut-T-PGL}
  The identification of vertices of $T(Y)$ with $\PP(\ZZ^2)$ induces the isomorphism
  of the natural action of $\Aut(T(Y))$ on $T(Y)$ 
  with the natural action of $\PGL(2,\ZZ)$ on $\PP(\ZZ^2)$.
\end{corollary}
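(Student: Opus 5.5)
Via the marking of Lemma~\ref{lm:marking-bijection}, the vertices of $T(Y)$ are identified with $\PP(\ZZ^2)$. We will show that, under this identification, the simplicial automorphisms of $T(Y)$ are exactly the bijections of $\PP(\ZZ^2)$ induced by elements of $\PGL(2,\ZZ)$, and that the map $\PGL(2,\ZZ)\to\Aut(T(Y))$ is injective. Since $T(Y)$ is a simplicial complex, an automorphism is determined by its action on vertices. So it suffices to show two things: a bijection of $\PP(\ZZ^2)$ preserving edges and triangles comes from a unique $g\in\PGL(2,\ZZ)$, and conversely every $g$ preserves edges and triangles.

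\emph{$\PGL(2,\ZZ)$ acts by automorphisms.} Corollary~\ref{cr:basis-edge} shows that edges of $T(Y)$ are exactly the pairs of elements of $\PP(\ZZ^2)$ forming a basis. This condition is invariant under $\PGL(2,\ZZ)$, since a representing matrix $M\in\mathrm{GL}(2,\ZZ)$ sends bases to bases and commutes with $v\mapsto -v$. Next we characterize triangles. Every triangle of $T(Y)$ has the form $\{u,v,u+v\}$ with $u,v$ a basis (proof of Lemma~\ref{lm:markings}(i)). Conversely, if $u,v$ is an edge, the two triangles containing it are $\{u,v,u\pm v\}$ by Lemma~\ref{lm:markings}(i) and Lemma~\ref{lm:complex-T-basic}(ii). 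Hence the triangles are exactly the triples $\{u,v,w\}$ with $u,v$ a basis and $w\equiv u\pm v$, and equivalently the triples that are pairwise bases. Linear maps preserve this description, so every $g$ induces a simplicial automorphism. For injectivity we reuse the end of the proof of Lemma~\ref{lm:markings}(iii): an element of $\PGL(2,\ZZ)$ fixing every point of $\PP(\ZZ^2)$ is trivial. Concretely, fixing $(1,0)$, $(0,1)$ and $(1,1)$ forces $M=\pm I$.

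\emph{Every automorphism comes from $\PGL(2,\ZZ)$.} Let $\alpha\in\Aut(T(Y))$. It maps the triangle $\{(1,0),(0,1),(1,1)\}$ to a triangle $\{u',v',u'+v'\}$; after choosing signs of representatives we may write $\alpha(1,0)=u'$, $\alpha(0,1)=v'$, $\alpha(1,1)=u'+v'$. Let $g$ be the class of the matrix with columns $u'$ and $v'$. Then $g^{-1}\alpha$ is an automorphism fixing the base triangle pointwise. Such an automorphism preserves the adjacency tree and fixes each edge of the base triangle. It must therefore fix the opposite vertex of each adjacent triangle, since each edge lies in exactly two triangles, one of which is fixed. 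Propagating along the connected adjacency tree (Lemma~\ref{lm:complex-T-tree}) as in Lemma~\ref{lm:markings}(ii), $g^{-1}\alpha$ fixes all vertices, so $\alpha=g$. Equivariance of the two actions is immediate from the construction.

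The main obstacle is the characterization of triangles as images of the base triangle under linear maps, since it requires consistent sign choices in $\PP(\ZZ^2)$. I would handle it by fixing representatives exactly as in Lemma~\ref{lm:markings}(iii), choosing $m_u,m_v$ so that the third vertex is $m_u+m_v$ and not $m_u-m_v$.
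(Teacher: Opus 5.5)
Your proof is correct and follows essentially the same route as the paper. You show that $\PGL(2,\ZZ)$ preserves edges and triangles via Corollary~\ref{cr:basis-edge} and Lemma~\ref{lm:markings}(i), and get faithfulness from the base triangle. For surjectivity you compose with the class of the matrix whose columns are suitably signed images of $(1,0),(0,1)$, then propagate the pointwise-fixed base triangle along the connected adjacency tree, which is exactly the reconstruction of Lemma~\ref{lm:markings}(ii) that the paper invokes.
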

\begin{proof}
  Denote by $\mu$ the marking, which is a bijection between the vertices of $T(Y)$ and $\PP(\ZZ^2)$ by Lemma~\ref{lm:marking-bijection}.
  Since $g\in\PGL(2,\ZZ)$ preserves bases of $\ZZ^2$ and triples of classes $m_1$, $m_2$, $m_1+m_2$, the bijection $\mu^{-1}g\mu$ preserves edges and triangles of $T(Y)$ by Corollary~\ref{cr:basis-edge} and Lemma~\ref{lm:markings}(i).
  This action of $\PGL(2,\ZZ)$ is faithful: an element fixing $(1,0)$, $(0,1)$, $(1,1)$ in $\PP(\ZZ^2)$ is represented by $\pm E$.

  Conversely, for $\phi\in\Aut(T(Y))$ the labeling $\mu\phi$ satisfies Lemma~\ref{lm:markings}(i), so, as in the proof of Lemma~\ref{lm:markings}(iii), there is $g\in\PGL(2,\ZZ)$ such that $\mu\phi$ and $g\mu$ agree on the triangle $((1,0),(0,1),(1,1))$; by the reconstruction of Lemma~\ref{lm:markings}(ii) they coincide, i.e., $\phi=\mu^{-1}g\mu$.
\end{proof}

\begin{remark}\label{rm:farey}
  By Corollary~\ref{cr:basis-edge}, the marking identifies $T(Y)$ with the Farey tessellation of the hyperbolic plane: vertices are the classes $\pm(p,q)\in\PP(\ZZ^2)$, i.e., the elements $p/q$ of $\QQ\cup\{\infty\}$, and edges are the pairs comprising a basis of $\ZZ^2$, cf.~\cite[Sec.~3]{FKS12}.
  Corollary~\ref{cr:aut-T-PGL} thus recovers the well-known fact that the automorphism group of the Farey tessellation is $\PGL(2,\ZZ)$.
\end{remark}

\begin{theorem}\label{th:aut-action}
  Let $(X,D)$ be a $(0,0)$-completion or a triangle completion of $Y$.
  Then the action of $\Aut(Y)\cong\Bir(X,D)$ on the inner components of $Y$ induces an action on $T(Y)$ by automorphisms with kernel $\Aut^\natural(X,D)$.
  In particular, we have the exact sequence
  \[
  1\to \Aut^\natural(X,D) \to \Aut(Y) \to \Aut(T(Y))\cong\PGL(2,\ZZ).
  \]
\end{theorem}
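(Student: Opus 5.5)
First we show that $\Aut(Y)$ acts on $T(Y)$ by automorphisms. Let $\alpha\in\Aut(Y)$. It extends to a birational map of NC-pairs between any two NC-completions of $Y$. By Proposition~\ref{pr:00-completions-PZ}\ref{pr:00-completions-PZ:dominating-pair}, for $(0,0)$-completions this map is inner, so it permutes inner components: every divisorial valuation of $Y$ realized by an inner component is sent by $\alpha$ to a valuation realized by an inner component. Concretely, let $(X',D')$ be a $(0,0)$-completion of $Y$. Composing the isomorphism $X'\setminus\supp D'\cong Y$ with $\alpha$ yields a new $(0,0)$-completion of $Y$. The boundary components of this new completion are the images under $\alpha$ of those of $D'$, and the inner components transform accordingly (Remark~\ref{rm:inner-identification}). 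Hence $\alpha$ sends edges to edges. The same argument applied to triangle completions shows that it sends triangles to triangles. The action is compatible with composition, since valuations are pushed forward functorially. This gives a homomorphism $\Aut(Y)\to\Aut(T(Y))$, and by Corollary~\ref{cr:aut-T-PGL} we have $\Aut(T(Y))\cong\PGL(2,\ZZ)$.

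Next we identify the kernel. First take $(X,D)$ to be a $(0,0)$-completion with edge $e$. Let $\alpha$ act trivially on $T(Y)$. Then it fixes the two vertices of $e$, so the transported completion $\alpha_*(X,D)$ defines the same set of inner components. By Lemma~\ref{lm:completion-unique}, the induced birational selfmap of $(X,D)$ is then an isomorphism, i.e., $\alpha\in\Aut(X,D)$, and it preserves both components. It also fixes the two triangles containing $e$. These are the inner blowups at the two nodes, by Lemma~\ref{lm:complex-T-basic}\ref{lm:complex-T-basic:two-triangles}, so $\alpha$ fixes each node, and therefore $\alpha\in\Aut^\natural(X,D)$.

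Conversely, let $\alpha\in\Aut^\natural(X,D)$. Then $\alpha$ fixes the components of $D$ and the nodes of $D$, and hence, by induction on iterated blowups, every inner component at each node. So $\alpha$ fixes all vertices of $T(Y)$. Since $T(Y)$ is a simplicial complex determined by its vertices, $\alpha$ acts trivially.

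For a triangle completion $(X,D)$ with $D=L_1+L_2+L_3$, the argument is the same. A trivially acting $\alpha$ fixes the triangle, so by Lemma~\ref{lm:completion-unique} it lies in $\Aut(X,D)$ and fixes each $L_i$. The nodes are the pairwise intersections $L_i\cap L_j$, so they are fixed automatically. Conversely, an element of $\Aut^\natural(X,D)$ descends to the $(0,0)$-pair obtained by contracting $L_1$, fixing both of its nodes: these are the image of $L_1$ and the point $L_2\cap L_3$. We then conclude by the $(0,0)$ case.

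The main obstacle I expect is the first step: checking carefully that $\alpha$ maps inner components to inner components and preserves the simplicial structure. This should reduce to the fact that transporting a completion by $\alpha$ gives again a $(0,0)$- or triangle completion, combined with Proposition~\ref{pr:00-completions-PZ}\ref{pr:00-completions-PZ:00}, so I do not expect a genuine difficulty.
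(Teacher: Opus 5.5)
Your proof is correct and follows essentially the same route as the paper: you transport completions by $\alpha$ so that it permutes inner components and preserves edges and triangles, use Lemma~\ref{lm:completion-unique} to extend a trivially acting automorphism to $\Aut(X,D)$, pin down the nodes through the distinct exceptional components over them, and prove the converse by induction on inner blowups. The only difference is that you treat $(0,0)$- and triangle completions separately, reducing the triangle converse to the $(0,0)$ case by contracting $L_1$, whereas the paper handles both cases with a single uniform argument.
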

\begin{proof}
  An automorphism $g\in\Aut(Y)$ permutes the inner components of $Y$, regarded as divisorial valuations of $Y$, see Remark~\ref{rm:inner-identification}; composing $(0,0)$- and triangle completions of $Y$ with $g$, we see that this permutation preserves the sets of edges and triangles of $T(Y)$, and Corollary~\ref{cr:aut-T-PGL} yields the homomorphism $\mu\colon\Aut(Y)\to\Aut(T(Y))\cong\PGL(2,\ZZ)$.
  If $g\in\Aut^\natural(X,D)$, then, lifting $g$ to iterated inner blowups, we see by induction that $g$ preserves every inner component of $Y$, so $\mu(g)=\id$.
  Conversely, assume that $\mu(g)=\id$, i.e., $g$ fixes every inner component of $Y$.
  In particular, $g$ preserves the set of inner components defined by the components of $D$, hence extends to an automorphism of $(X,D)$ by Lemma~\ref{lm:completion-unique}.
  This automorphism preserves every component of $D$.
  It also preserves every node of $D$, since the exceptional component of the blowup at a node is a $g$-fixed inner component of $Y$ and differs from node to node.
  Hence $g\in\Aut^\natural(X,D)$.
\end{proof}

\section{Fibrations}\label{sec:fibrations}

In this section we attach to every vertex of the triangle complex a $\PP^1$-fibration.
Throughout the section, $Y$ is a normal surface admitting a $(0,0)$-completion and $T(Y)$ is its triangle complex, see Definition~\ref{def:complex-T}.
Note that every NC-completion $(X,D)$ of $Y$ is then normal, since $D$ lies in the regular locus of $X$.
We say that morphisms $\pi_1\colon Z\to B_1$ and $\pi_2\colon Z\to B_2$ of a surface $Z$ onto curves define the \emph{same fibration} if $\pi_2=\beta\circ\pi_1$ for an isomorphism $\beta\colon B_1\to B_2$; equivalently, if they have the same fibers.
By a \emph{$\PP^1$-fibration} of a projective surface $X$ we mean a morphism $X\to\PP^1$ whose general fiber is isomorphic to $\PP^1$.

Let us first describe the $\PP^1$-fibrations attached to a triangle completion.

\begin{lemma}\label{lm:fibrations}
Let $(X,D)$ be a triangle pair with $X$ normal and $D=L_1+L_2+L_3$. Then we have the $\PP^1$-fibrations
$$\pi_1,\pi_2,\pi_3\colon X\to \PP^1,$$
which are defined by the linear systems $|L_2+L_3|, |L_1+L_3|, |L_1+L_2|$ respectively.
In particular, a general fiber $F$ of $\pi_i$ meets $D$ only along $L_i$, and $F\cdot L_i=2$, $i=1,2,3$.
\end{lemma}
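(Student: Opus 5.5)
By the symmetry of the triangle it suffices to treat $\pi_3$, given by $|L_1+L_2|$. I would reduce to a $(0,0)$-pair. By Remark~\ref{rm:triangle-vs-00}, contracting $L_1$ gives a birational morphism $\sigma\colon X\to X'$ onto a $(0,0)$-pair $(X',D')$ with $D'=L_2'+L_3'$. Here $L_2'$ is a smooth rational $0$-curve in the regular locus of the normal projective surface $X'$, and it passes through the point $\sigma(L_1)$. Hence $\sigma^*L_2'=L_2+L_1$. It is therefore enough to show that $|L_2'|$ is a base point free pencil defining a $\PP^1$-fibration $X'\to\PP^1$ with $L_2'$ as a fiber; composing with $\sigma$ then gives $\pi_3$. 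Swapping the roles of $L_2,L_3$ and of the contracted curve gives $\pi_1$ and $\pi_2$.

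\textbf{Step 1: sections of $\mathcal O(nL_2')$.} Write $L=L_2'$. Since $L$ lies in the regular locus, $\mathcal O_{X'}(nL)$ is invertible near $L$ and $\mathcal O_L(nL)\cong\mathcal O_{\PP^1}$ because $L^2=0$. Consider the sequences
\[
0\to\mathcal O_{X'}((n-1)L)\to\mathcal O_{X'}(nL)\to\mathcal O_L\to 0 .
\]
The maps $H^1(\mathcal O((n-1)L))\to H^1(\mathcal O(nL))$ are surjective, since $H^1(\mathcal O_L)=0$. So $\dim H^1(\mathcal O(nL))$ is nonincreasing and stabilizes. For large $n$ the connecting map $H^0(\mathcal O_L)\to H^1(\mathcal O((n-1)L))$ is therefore zero. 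Hence there is a section of $\mathcal O(nL)$ that does not vanish identically on $L$, and in fact does not vanish anywhere on $L$, since its restriction is a nonzero constant. Together with the section $s_L^n$, this shows that $|nL|$ has no base points on $L$. Because $L^2=0$, this system also has no base points off $L$: a base point off $L$ would lie on the divisor of the constant-on-$L$ section, which is disjoint from $L$, and on $nL$, which it is not. So $|nL|$ defines a morphism to a curve.

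\textbf{Step 2: the fibration.} I would take the Stein factorization $f\colon X'\to B$. Then $L$ lies set-theoretically in a fiber, and since $L^2=0$ with $L$ connected and reduced, the fiber through $L$ is a multiple $mL$. The curve $L_3'$ meets $L$, with $L_3'\cdot L=2>0$, so it dominates $B$. As $L_3'$ is rational, Lüroth gives $B\cong\PP^1$.

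A general fiber $F$ avoids the finitely many singular points of $X'$ and is numerically $mL$. Since $L$ is smooth rational in the regular locus with $L^2=0$, adjunction near $L$ gives $K\cdot L=-2$. Hence $K\cdot F=-2m$ and $p_a(F)=1-m$, which forces $m=1$ and $F\cong\PP^1$. On $B\cong\PP^1$ all fibers are linearly equivalent, so $L$ is a fiber and $|L|$ is the pencil $f^*|\mathcal O_{\PP^1}(1)|$. This yields the $\PP^1$-fibration $\pi_3=f\circ\sigma$ defined by $|L_1+L_2|$.

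\textbf{Step 3: the intersection numbers.} For $F\in|L_1+L_2|$ general, a direct computation gives $F\cdot L_1=-1+1=0$, $F\cdot L_2=0$ and $F\cdot L_3=1+1=2$. Since $L_1+L_2$ is itself a fiber, a general fiber is disjoint from $L_1\cup L_2$, so it meets $D$ only along $L_3$.

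The main obstacle is Step 1: obtaining a section not vanishing on $L$ without knowing $H^1(X',\mathcal O)=0$, since $X'$ is only normal with isolated singularities. The stabilization trick above is what I would try first. Care is needed to work only with $\mathcal O(nL)$, which is Cartier because $L$ avoids the singular points.
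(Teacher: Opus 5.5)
Your argument is correct, and it shares the paper's outer frame. You contract $L_1$ via Remark~\ref{rm:triangle-vs-00} and show that the image of $L_2$ is a fiber of a $\PP^1$-fibration of the $(0,0)$-pair. You get $B\cong\PP^1$ from the other boundary curve, which meets it with intersection number $2$, then pull back by $\sigma$ and read off the intersection numbers.

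The difference lies in the core step. The paper passes to a resolution of $X'$, which is an isomorphism near $D'$, and cites \cite[Chapter~V, Proposition~4.3(i)]{BPV04} for a smooth rational $0$-curve on a smooth surface. It then descends the fibration to the normal surface $X'$, the exceptional curves being disjoint from the fiber and hence vertical. You instead run the classical $h^1$-stabilization argument, which is essentially the proof of the cited result, directly on $X'$. You use only that $L$ is Cartier because it lies in the regular locus. This makes your proof self-contained and avoids both the resolution and the descent. The price is that you must prove yourself that the fiber through $L$ has multiplicity one, which the citation gives the paper for free.

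Your adjunction computation does establish multiplicity one, with two points worth stating:
\begin{enumerate}[(1)]
\item It tacitly uses that a general fiber is smooth and connected, which follows from generic smoothness on the regular locus in characteristic zero plus Stein factorization. This matters because $p_a\ge0$ fails for nonreduced fibers, e.g.\ $p_a(mL)=1-m$.
\item The equality $K\cdot F=mK\cdot L$ is legitimate even if $K_{X'}$ is not $\QQ$-Cartier, because $F$ and $mL$ are numerically equivalent Cartier divisors.
\end{enumerate}

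Finally, you assert, just as the paper does, that $|L|=f^*|\mathcal O_{\PP^1}(1)|$ and $|L_1+L_2|=\sigma^*|L|$. Both follow from $f_*\mathcal O_{X'}=\mathcal O_{\PP^1}$, $\sigma_*\mathcal O_X=\mathcal O_{X'}$ and the projection formula.
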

\begin{proof}
By Remark~\ref{rm:triangle-vs-00}, the contraction of $L_1$ is an inner blowdown $\sigma\colon(X,D)\to(X',D')$ onto a $(0,0)$-pair.
Denote by $C$ and $C'$ the images of $L_2$ and $L_3$, so that $\sigma^*C=L_1+L_2$ and $C\cdot C'=2$.

A resolution of singularities of $X'$ is an isomorphism near $D'$, which lies in the regular locus.
By \cite[Chapter~V, Proposition~4.3(i)]{BPV04}, the 0-curve $C$ on the resolution is a fiber of a $\PP^1$-fibration over a smooth curve $B$ (possibly with reducible fibers).
The exceptional curves of the resolution do not meet the fiber $C$, hence lie in fibers, and the fibration descends to a $\PP^1$-fibration $\pi_C\colon X'\to B$, the surface $X'$ being normal.
The rational curve $C'$ is not contained in a fiber, since $C'\cdot C=2$. Thus, the image of $C'$ equals $B$, and $B\cong\PP^1$.

Now all fibers of $\pi_C$ are linearly equivalent; conversely, every element of the complete linear system $|C|$ is a fiber.
Hence $|C|=\pi_C^*|\cO_{\PP^1}(1)|$ is one-dimensional and defines $\pi_C$.
Its preimage in $X$ is $|L_1+L_2|$, which thus defines the $\PP^1$-fibration $\pi_3:=\pi_C\circ\sigma$ of $X$.
Similarly, we obtain the fibrations $\pi_1$ and $\pi_2$.
The assertions on a general fiber $F$ of $\pi_3$ follow from the equalities $F\cdot L_3=(L_1+L_2)\cdot L_3=2$ and $F\cdot L_1=F\cdot L_2=0$.
\end{proof}

\begin{example}\label{ex:markov-fibrations}
  Consider the Markov surface $M=\{xyz=x^2+y^2+z^2\}\subset \AA^3$ and its closure $\overline{M}\subset \PP^3$.
  Then the fibrations $\pi_1$, $\pi_2$, and $\pi_3$ of $\overline{M}$ are the extensions of the coordinate functions $x,y,z\colon M\to\AA^1\subset\PP^1$.
\end{example}

By Lemma~\ref{lm:fibrations}, each component $L_i$ of the boundary divisor of a triangle completion carries a $\PP^1$-fibration $\pi_i$, whose general fibers meet the boundary only along $L_i$.
We are going to show that the restriction of $\pi_i$ to $Y$ depends only on the vertex of $T(Y)$ defined by $L_i$ and not on the triangle completion.
We first describe the star of a vertex of $T(Y)$ in the following lemma that complements Lemma~\ref{lm:complex-T-tree}.

\begin{lemma}\label{lm:vertex-fan}
  Let $v$ be a vertex of $T(Y)$. Then the vertices adjacent to $v$ can be numbered as $u_a$, $a\in\ZZ$, so that the triangles of $T(Y)$ containing $v$ are exactly $\Delta_a:=(v,u_a,u_{a+1})$, $a\in\ZZ$.
  In particular, the triangles $\Delta_{a-1}$ and $\Delta_a$ are adjacent along the edge $(v,u_a)$, so any two triangles containing $v$ are connected by a sequence of triangles containing $v$ in which consecutive ones share an edge containing $v$.
\end{lemma}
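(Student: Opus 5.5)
The plan is to use the marking of $T(Y)$ by $\PP(\ZZ^2)$ to reduce the statement to an explicit computation. Corollary~\ref{cr:aut-T-PGL} says $\PGL(2,\ZZ)$ acts on $T(Y)$ by automorphisms. Lemma~\ref{lm:marking-bijection} says it acts transitively on vertices, since any primitive vector extends to a basis. So we may assume $v=(1,0)$; alternatively, we may re-mark $T(Y)$ by Lemma~\ref{lm:markings}(iii), starting from an edge containing $v$.

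With $v=(1,0)$, Corollary~\ref{cr:basis-edge} identifies the vertices adjacent to $v$ with the classes $w\in\PP(\ZZ^2)$ such that $\{(1,0),w\}$ is a basis, i.e. $\det((1,0),w)=\pm1$. These are exactly $w=\pm(a,1)$ with $a\in\ZZ$. We set $u_a:=(a,1)$; distinct $a$ give distinct elements of $\PP(\ZZ^2)$, so $a\mapsto u_a$ is a bijection from $\ZZ$ onto the neighbours of $v$.

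Next we determine the triangles containing $v$. Any such triangle has the form $(v,u,u')$ with $u,u'$ neighbours of $v$ that are adjacent to each other. By the proof of Lemma~\ref{lm:markings}(i), the three vertices are $m_1,m_2,m_1+m_2$ for suitable representatives forming a basis, so some signed combination relation holds. Concretely, $u_a$ and $u_b$ are adjacent iff $\det((a,1),(b,1))=a-b=\pm1$, again by Corollary~\ref{cr:basis-edge}. Conversely, for each $a$ the triple $(v,u_a,u_{a+1})$ is a triangle: $u_{a+1}=v+u_a$, and by Lemma~\ref{lm:markings}(i) the vertices opposite the edge $(v,u_a)$ in its two adjacent triangles are $u_a\pm v=u_{a\pm1}$.

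The remaining worry is that a triple of pairwise adjacent vertices might fail to be a triangle of $T(Y)$. This is ruled out by Lemma~\ref{lm:complex-T-basic}(ii): the edge $(v,u_a)$ lies in exactly two triangles, and Lemma~\ref{lm:markings}(i) identifies them as $\Delta_{a-1}$ and $\Delta_a$. Every triangle containing $v$ contains an edge $(v,u_a)$ for some $a$, so it is one of these. This gives both the description of the star and the adjacency of $\Delta_{a-1}$ and $\Delta_a$ along $(v,u_a)$. The final connectivity claim follows by walking through the indices from $a$ to $b$.

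The main obstacle is the bookkeeping of signs in $\PP(\ZZ^2)$, in particular checking that $u_a\pm v$ gives exactly $u_{a\pm1}$ rather than some other representative. This is handled by fixing the representatives $(a,1)$ throughout.
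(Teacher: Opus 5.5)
Your proposal is correct and is essentially the paper's proof: you normalize the marking so that $v=(1,0)$, read off the neighbours $u_a=(a,1)$ from Corollary~\ref{cr:basis-edge}, and use Lemma~\ref{lm:markings}(i) to see that the two triangles on the edge $(v,u_a)$ have third vertices $u_a\pm v=u_{a\pm1}$, i.e.\ they are $\Delta_{a-1}$ and $\Delta_a$. Your remarks that every triangle through $v$ contains some edge $(v,u_a)$, and that Lemma~\ref{lm:complex-T-basic}(ii) leaves room for no further triangles, make explicit what the paper leaves implicit.
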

\begin{proof}
  Consider a marking of $T(Y)$ such  that $v=(1,0)$, see Notation~\ref{nt:markings}\ref{nt:markings:00}.
  By Corollary~\ref{cr:basis-edge}, the vertices adjacent to $v$ are exactly the classes comprising a basis of $\ZZ^2$ with $(1,0)$, i.e., the classes $u_a:=(a,1)$, $a\in\ZZ$.
  By Lemma~\ref{lm:markings}(i), the two triangles containing the edge $(v,u_a)$ have opposite vertices $u_a\pm(1,0)$, i.e., they are $\Delta_{a-1}$ and $\Delta_a$.
\end{proof}

\begin{proposition}\label{pr:vertex-fibration}
  Let $v$ be a vertex of $T(Y)$.
  Given a triangle $\Delta$ of $T(Y)$ containing $v$, denote by $(X_\Delta,D_\Delta)$ the corresponding triangle completion of $Y$, by $L_v^\Delta\subseteq D_\Delta$ the component defining $v$, and by $\pi_v^\Delta\colon X_\Delta\to\PP^1$ the $\PP^1$-fibration defined by the linear system $|D_\Delta-L_v^\Delta|$, see Lemma~\ref{lm:fibrations}.
  Then, for any two triangles $\Delta$ and $\Delta'$ of $T(Y)$ containing $v$, the restrictions $\pi_v^\Delta|_Y$ and $\pi_v^{\Delta'}|_Y$ define the same fibration of $Y$.
\end{proposition}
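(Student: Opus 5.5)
By Lemma~\ref{lm:vertex-fan}, any two triangles containing $v$ are joined by a sequence of triangles containing $v$ in which consecutive ones share an edge through $v$. So it suffices to treat two adjacent triangles $\Delta=(v,u,w)$ and $\Delta'=(v,u,w')$ sharing the edge $(v,u)$. This reduces the statement to a local comparison through a single $(0,0)$-completion.

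By Lemma~\ref{lm:complex-T-basic}\ref{lm:complex-T-basic:two-triangles} and Remark~\ref{rm:triangle-vs-00}, the edge $(v,u)$ corresponds to a $(0,0)$-completion $(X',D')$ with $D'=C_v+C_u$. The two triangle completions are its inner blowups $\sigma\colon X_\Delta\to X'$ and $\sigma'\colon X_{\Delta'}\to X'$ at the two nodes $p,p'$ of $D'$. Their exceptional curves are $L_w^\Delta$ and $L_{w'}^{\Delta'}$, and $L_v^\Delta$, $L_u^\Delta$ are the strict transforms of $C_v$, $C_u$ (similarly for $\Delta'$).

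The key observation is that $|D_\Delta-L_v^\Delta|=|L_u^\Delta+L_w^\Delta|$ is the pullback $\sigma^*|C_u|$. Indeed, $\sigma^*C_u=L_u^\Delta+L_w^\Delta$, since $C_u$ passes through the blown-up node $p$ with multiplicity one. The proof of Lemma~\ref{lm:fibrations} shows that this linear system defines $\pi_{C_u}\circ\sigma$, where $\pi_{C_u}\colon X'\to\PP^1$ is the $\PP^1$-fibration given by $|C_u|$. Hence $\pi_v^\Delta=\pi_{C_u}\circ\sigma$ and, symmetrically, $\pi_v^{\Delta'}=\pi_{C_u}\circ\sigma'$.

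Since $\sigma$ and $\sigma'$ are isomorphisms over $Y=X'\setminus\supp D'$, compatible with the identifications with $Y$, both restrictions to $Y$ equal $\pi_{C_u}|_Y$ (up to an automorphism of $\PP^1$ from the choice of basis of the pencil). Therefore they define the same fibration.

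The only point needing care is the identification of linear systems: that $|L_u^\Delta+L_w^\Delta|$ on $X_\Delta$ is exactly the pencil $\sigma^*|C_u|$, rather than something larger. This follows because the proof of Lemma~\ref{lm:fibrations} was carried out precisely by contracting the exceptional curve and pulling back $|C|$, so I would simply invoke that construction with $L_1=L_w^\Delta$.

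One should also check the indexing convention. The fibration attached to $v$ is given by the complement of $L_v$, so it is the fibration whose fibre class contains $C_u$, and whose general fibers meet the boundary only along $L_v$. This class is the same in both blowups, which is what makes the argument work.
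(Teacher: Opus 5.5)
Your proposal is correct and matches the paper's proof: you reduce via Lemma~\ref{lm:vertex-fan} to two triangles sharing an edge $(v,u)$, contract the third component to the $(0,0)$-completion of that edge, and note that $|D_\Delta-L_v^\Delta|=\sigma^*|C_u|$ as in the proof of Lemma~\ref{lm:fibrations}, so both restrictions coincide with $\pi_{C_u}|_Y$. Your appeal to Lemma~\ref{lm:complex-T-basic}\ref{lm:complex-T-basic:two-triangles}, to see that $\Delta$ and $\Delta'$ are the two nodal blowups of one $(0,0)$-completion, makes explicit what the paper leaves implicit in ``the same argument for $\Delta'$''.
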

\begin{proof}
  By Lemma~\ref{lm:vertex-fan}, we may assume that $\Delta=(v,u,w)$ and $\Delta'$ share an edge containing $v$, say $e:=(v,u)$.
  Contraction, say $\sigma$, of the component $L_w^\Delta$ yields a $(0,0)$-completion $(X_e,C_v+C_u)$ of $Y$, where $C_u, C_v$ correspond to $u,v$ respectively.
  Since $\sigma(L_w^\Delta)$ is a node of $C_v+C_u$, we have $\sigma^*|C_u|=|D_\Delta-L_v^\Delta|$ as in the proof of Lemma~\ref{lm:fibrations}, so $\pi_v^\Delta=\pi_e\circ\sigma$ up to an isomorphism of the base, where $\pi_e\colon X_e\to\PP^1$ is the $\PP^1$-fibration with fiber $C_u$.
  As $\sigma$ restricts to the identity on $Y$, we get $\pi_v^\Delta|_Y=\pi_e|_Y$, and the same argument for $\Delta'$ gives $\pi_v^{\Delta'}|_Y=\pi_e|_Y$.
\end{proof}

\begin{definition}\label{def:vertex-fibration}
  Let $v$ be a vertex of $T(Y)$.
  The \emph{fibration attached to $v$} is $\pi_v:=\pi_v^\Delta|_Y$ for a triangle $\Delta$ of $T(Y)$ containing $v$; by Proposition~\ref{pr:vertex-fibration}, it is well defined up to an isomorphism of the base.
\end{definition}

\begin{corollary}\label{cr:vertex-fibration-properties}
  In the notation of Proposition~\ref{pr:vertex-fibration}, let $v$ be a vertex of $T(Y)$ and let $\Delta=(v,u,w)$ be a triangle of $T(Y)$ containing $v$. Then the following assertions hold.
  \begin{enumerate}[(i)]
  \item The divisor $D_\Delta-L_v^\Delta$ is a fiber of $\pi_v^\Delta$, and $\pi_v$ maps $Y$ surjectively onto the complement $\AA^1\subseteq\PP^1$ of the image of this fiber.\label{cr:vertex-fibration-properties:image}
  \item A general fiber of $\pi_v^\Delta$ meets $D_\Delta$ exactly in two distinct points of $L_v^\Delta$; in particular, a general fiber of $\pi_v$ is isomorphic to $\PP^1$ with two points removed.\label{cr:vertex-fibration-properties:fibers}
  \item If $\pi_v$ and $\pi_{v'}$ define the same fibration of $Y$ for vertices $v$ and $v'$ of $T(Y)$, then $v=v'$.\label{cr:vertex-fibration-properties:injective}
  \item Every $g\in\Aut(Y)$ maps the fibers of $\pi_v$ onto the fibers of $\pi_{g(v)}$, where $g(v)$ is the image of $v$ under the action of Theorem~\ref{th:aut-action}; in other words, $\pi_{g(v)}\circ g$ and $\pi_v$ define the same fibration.\label{cr:vertex-fibration-properties:equivariance}
  \end{enumerate}
\end{corollary}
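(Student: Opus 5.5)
I will prove the four items in order, each time working on a single triangle completion $(X_\Delta,D_\Delta)$ and the fibration $\pi_v^\Delta$ given by $|L_u+L_w|$, where I abbreviate $L_u:=L_u^\Delta$ and so on.

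For \ref{cr:vertex-fibration-properties:image}, the divisor $L_u+L_w$ is an element of the linear system defining $\pi_v^\Delta$, so it is a fiber; call its image $\infty\in\PP^1$. The curve $L_v$ is not contained in a fiber, since $F\cdot L_v=2$ for a fiber $F$ by Lemma~\ref{lm:fibrations}. Hence $L_v$ dominates $\PP^1$, and
\[
Y=X_\Delta\setminus(L_u\cup L_w\cup L_v)\subseteq (\pi_v^\Delta)^{-1}(\PP^1\setminus\{\infty\}).
\]
For surjectivity onto $\AA^1$, take $t\neq\infty$. The fiber $(\pi_v^\Delta)^{-1}(t)$ is disjoint from $L_u\cup L_w$ and meets $L_v$ in finitely many points, since $L_v$ is not in a fiber. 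The fiber is a curve, so it has points off $L_v$, and these points lie in $Y$.

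For \ref{cr:vertex-fibration-properties:fibers}, restrict $\pi_v^\Delta$ to $L_v\cong\PP^1$. This restriction has degree $F\cdot L_v=2$, and in characteristic zero it is separable. Hence a general fiber $F$ meets $L_v$ transversally in exactly two distinct points, and $F$ misses $L_u,L_w$. A general fiber is $\PP^1$ and lies in the smooth locus, so $F\cap Y\cong\PP^1\setminus\{2\text{ points}\}$.

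For \ref{cr:vertex-fibration-properties:injective}, suppose $\pi_v$ and $\pi_{v'}$ define the same fibration with $v\neq v'$. I would distinguish two cases.

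\textbf{Case 1: $v$ and $v'$ are adjacent.} Pick a triangle $\Delta\ni v,v'$. Then $\pi_v^\Delta$ and $\pi_{v'}^\Delta$ are two morphisms $X_\Delta\to\PP^1$ whose fibers agree on the dense open set $Y$, so they define the same fibration of $X_\Delta$. But a general fiber of $\pi_v^\Delta$ has intersection $2$ with $L_v$ and $0$ with $L_{v'}$, while for $\pi_{v'}^\Delta$ these numbers are swapped. This is a contradiction.

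\textbf{Case 2: $v$ and $v'$ are non-adjacent.} This is the main obstacle, because no single triangle completion contains both. I would compare the two fibrations on a common resolution: take $(\hat X,\hat D)$ dominating triangle completions $\Delta\ni v$ and $\Delta'\ni v'$ by inner blowups (Proposition~\ref{pr:00-completions-PZ}). On $\hat X$, the closure of a general fiber of the common fibration meets $\hat D$ only in the component corresponding to $v$, because inner blowups are centred at nodes, which general fibers avoid. By the same argument for $\Delta'$, it meets $\hat D$ only in the component corresponding to $v'$. These components are distinct, and a general fiber does meet the boundary (its part in $Y$ is not complete), so this is a contradiction. The same argument also covers Case 1. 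The point to verify carefully is that pullbacks of general fibers avoid the exceptional components; this holds because general fibers miss the finitely many blown-up points.

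For \ref{cr:vertex-fibration-properties:equivariance}, extend $g$ to an isomorphism of NC-pairs $(X_\Delta,D_\Delta)\to(X_{g\Delta},D_{g\Delta})$. Such an extension exists by Lemma~\ref{lm:completion-unique}, since $g$ carries the inner components of $\Delta$ to those of $g\Delta$. This isomorphism sends $L_v$ to $L_{g(v)}$ and $D-L_v$ to $D'-L_{g(v)}$, so it transports the linear system $|D_\Delta-L_v|$ to $|D_{g\Delta}-L_{g(v)}|$. Therefore $\pi_{g(v)}^{g\Delta}\circ g=\beta\circ\pi_v^\Delta$ for some $\beta\in\Aut(\PP^1)$. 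Restricting to $Y$ and applying Definition~\ref{def:vertex-fibration} gives the claim.
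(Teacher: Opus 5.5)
Your proposal is correct and follows essentially the paper's proof: (i), (ii) and (iv) are argued the same way. Your Case~2 of (iii) is the paper's argument read from the fiber side: on a model where both components appear, general fibers meet the boundary only along the component of $v$, resp.\ of $v'$. Case~1 is subsumed by it, as you note. The only imprecision is that Proposition~\ref{pr:00-completions-PZ} concerns $(0,0)$-completions, so a model dominating both $X_\Delta$ and $X_{\Delta'}$ by inner blowups needs one more step: dominate their $(0,0)$-contractions, then blow up at most one further node on each side. The paper avoids this by dominating only $X_\Delta$ and identifying the generic points of $V'$ and $L_{v'}^{\Delta'}$ via Remark~\ref{rm:inner-identification}.
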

\begin{proof}
  Write $L_v=L_v^\Delta$, $L_u=L_u^\Delta$, $L_w=L_w^\Delta$ for the components of $D_\Delta$.

  \ref{cr:vertex-fibration-properties:image}
  The divisor $L_u+L_w$ belongs to the pencil $|D_\Delta-L_v|$ defining $\pi_v^\Delta$, hence is a fiber, say over $b_\infty\in\PP^1$. Since $(L_u+L_w)\cdot L_v=2$, the component $L_v$ is not contained in a fiber, so every fiber over $b\neq b_\infty$ meets $Y$, and $\pi_v(Y)=\PP^1\setminus\{b_\infty\}$.

  \ref{cr:vertex-fibration-properties:fibers}
  By Lemma~\ref{lm:fibrations}, a general fiber $F$ of $\pi_v^\Delta$ satisfies $F\cap D_\Delta=F\cap L_v$ and $F\cdot L_v=2$. The two points of $F\cap L_v$ are distinct since the degree-2 morphism $\pi_v^\Delta|_{L_v}$ is generically unramified, $\KK$ being of characteristic zero.

  \ref{cr:vertex-fibration-properties:injective}
  Realize $v$ and $v'\neq v$ by distinct components $V$ and $V'$ of a completion obtained from $(X_\Delta,D_\Delta)$ by inner blowups. By \ref{cr:vertex-fibration-properties:fibers}, the extension of $\pi_v$ to this completion contracts $V'$ to a point, whereas $\pi_{v'}^{\Delta'}$ is non-constant on $L_{v'}^{\Delta'}$ by $(D_{\Delta'}-L_{v'}^{\Delta'})\cdot L_{v'}^{\Delta'}=2$; this is a contradiction, since the birational map of completions extending $\id_Y$ identifies the generic points of $V'$ and $L_{v'}^{\Delta'}$, see Remark~\ref{rm:inner-identification}.

  \ref{cr:vertex-fibration-properties:equivariance}
  By Theorem~\ref{th:aut-action} and Lemma~\ref{lm:completion-unique}, $g$ extends to an isomorphism $\tilde g\colon(X_\Delta,D_\Delta)\to(X_{g(\Delta)},D_{g(\Delta)})$ of triangle pairs with $\tilde g(L_v^\Delta)=L_{g(v)}^{g(\Delta)}$. Then $\tilde g^*|D_{g(\Delta)}-L_{g(v)}^{g(\Delta)}|=|D_\Delta-L_v^\Delta|$, and restricting to $Y$ yields the assertion.
\end{proof}

The next proposition characterizes the fibrations of Lemma~\ref{lm:fibrations} among all $\PP^1$-fibrations of a triangle completion.

\begin{proposition}\label{pr:fibrations-characterization}
  Let $(X,D)$ be a triangle completion of $Y$ with $D=L_1+L_2+L_3$ and let $\pi\colon X\to\PP^1$ be a $\PP^1$-fibration.
  Then the following conditions are equivalent:
  \begin{enumerate}[(i)]
  \item $\pi$ is the same fibration as $\pi_i$ of Lemma~\ref{lm:fibrations} for some $i\in\{1,2,3\}$;\label{pr:fibrations-characterization:vertex}
  \item some fiber of $\pi$ is supported on $\supp D$;\label{pr:fibrations-characterization:fiber}
  \item a general fiber of $\pi$ meets exactly one component of $D$;\label{pr:fibrations-characterization:one-component}
  \item $\pi(Y)\neq\PP^1$.\label{pr:fibrations-characterization:image}
  \end{enumerate}
\end{proposition}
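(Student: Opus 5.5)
I will prove the cycle of implications (i)$\Rightarrow$(ii)$\Rightarrow$(iv)$\Rightarrow$(iii)$\Rightarrow$(i). Throughout, let $F$ denote a general fiber of $\pi$. Since $F$ is an irreducible curve moving in a pencil, $F\cdot C\ge 0$ for every curve $C$, and $F\cdot C=0$ exactly when $C$ lies in a fiber.

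\textbf{The easy implications.}

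(i)$\Rightarrow$(ii): By Corollary~\ref{cr:vertex-fibration-properties}\ref{cr:vertex-fibration-properties:image}, $L_j+L_k$ is a fiber of $\pi_i$, and it is supported on $\supp D$.

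(ii)$\Rightarrow$(iv): Let $F_0$ be a fiber with $\supp F_0\subseteq\supp D$, lying over $b_0$. Then $\pi^{-1}(b_0)\cap Y=\emptyset$, so $b_0\notin\pi(Y)$.

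(iv)$\Rightarrow$(iii): Pick $b\in\PP^1\setminus\pi(Y)$. The fiber $\pi^*(b)$ is supported in $\supp D$, so it is a sum $\sum a_jL_j$ with $a_j\ge0$ and not all zero. Since $L_j^2=-1$ and $L_j\cdot L_k=1$ for $j\ne k$, we get $\pi^*(b)\cdot L_j=-a_j+\sum_{k\ne j}a_k$.

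Not all $L_j$ can lie in fibers: the curve $D$ is connected, so it would lie in a single fiber, and then $D^2\le 0$. But $D^2=3\cdot(-1)+2\cdot 3=3>0$, a contradiction.

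Now suppose the support of $\pi^*(b)$ is a single component, say $a_1L_1$. Then $\pi^*(b)^2=-a_1^2<0$, which is impossible for a fiber. Hence at least two of the $a_j$ are positive.

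Fiber components have zero intersection with the whole fiber. For a component $L_j$ of the fiber this gives $a_j=\sum_{k\ne j}a_k$. If all three $a_j$ were positive, summing these equations would give $\sum a_j=2\sum a_j$, which is absurd. So the fiber is $a(L_j+L_k)$ for some $j\ne k$, and the relations force $a_j=a_k=a$.

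Let $i$ be the remaining index. Then $F\cdot L_j=F\cdot L_k=0$ and $F\cdot L_i=2a>0$. So the general fiber meets exactly one component, namely $L_i$. Moreover $F\equiv a(L_j+L_k)$, and $F^2=0$ is consistent with this.

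\textbf{The main step, (iii)$\Rightarrow$(i).}

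Suppose $F$ meets only $L_i$, with $F\cdot L_i>0$. Then $L_j$ and $L_k$ lie in fibers, since $F\cdot L_j=0$ means $L_j$ is vertical. They meet each other, so they lie in the same fiber $F_0$, which contains $L_j+L_k$.

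I need $F_0=a(L_j+L_k)$ exactly, possibly after showing there are no further components. For this I would use Zariski's lemma on the fiber. The intersection form on $\langle L_j,L_k\rangle$ is $\left(\begin{smallmatrix}-1&1\\1&-1\end{smallmatrix}\right)$, which is negative semidefinite with kernel $L_j+L_k$. A fiber's components span a negative semidefinite lattice whose radical is spanned by the fiber itself. Since $L_j+L_k$ lies in this radical, Zariski's lemma forces $L_j+L_k$ to be a rational multiple of $F_0$. As $L_j+L_k$ is reduced, $F_0=m(L_j+L_k)$ and the fiber has no other components.

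It follows that $\pi$ is defined by a pencil containing $m(L_j+L_k)$. Since $|L_j+L_k|$ is a base-point-free pencil defining $\pi_i$, the morphisms $\pi$ and $\pi_i$ share the fiber $\supp(L_j+L_k)$.

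The remaining obstacle is to conclude that they are the same fibration. My plan is to show that every fiber of $\pi_i$ is contained in a fiber of $\pi$. A general fiber $G$ of $\pi_i$ satisfies $G\cdot F=mG\cdot(L_j+L_k)=0$. Hence $G$ is contracted by $\pi$, and $G$ lies in a fiber of $\pi$. The two fibrations then factor through each other, so they have the same fibers up to an automorphism of $\PP^1$, which by the paper's convention means they are the same fibration.

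For the Zariski step, I would alternatively rederive it directly: an extra component $C$ of $F_0$ would need to satisfy $C\cdot F_0=0$. Connectedness of $F_0$ forces $C$ to meet $L_j+L_k$ positively, while no other component can compensate, because $(L_j+L_k)\cdot(L_j+L_k)=0$ already.
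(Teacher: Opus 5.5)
Your argument is correct. It uses the paper's core computations, arranged as a single cycle and with one step done by a different tool.

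The paper proves (i)$\Rightarrow$(iii),(iv), (iv)$\Rightarrow$(ii), (iii)$\Rightarrow$(ii) and (ii)$\Rightarrow$(i). Your coefficient analysis in (iv)$\Rightarrow$(iii) (one nonzero coefficient gives a negative square, three are impossible) is exactly the first half of the paper's (ii)$\Rightarrow$(i). Your factorization via $G\cdot F=0$ is its second half.

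The genuine difference is how you show that the fiber $F_0$ containing $L_j\cup L_k$ has no other components. The paper writes $F_0=bL_j+cL_k+R$ and uses $F_0\cdot(L_j+L_k)=0=(bL_j+cL_k)\cdot(L_j+L_k)$ to get $R\cdot(L_j+L_k)=0$. Hence $R$ is disjoint from $L_j\cup L_k$, and $R=0$ by connectedness of $F_0$. This only intersects curves with the Cartier divisors $L_j,L_k$, so it works verbatim on the possibly singular normal surface $X$.

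Your appeal to Zariski's lemma is more conceptual, but it needs some care:
\begin{enumerate}
\item The lemma is normally stated for smooth surfaces with connected fibers. You should pass to a resolution, which is an isomorphism near $\supp D$, and note connectedness of fibers via Stein factorization.
\item The reason $L_j+L_k$ lies in the radical is $(L_j+L_k)^2=0$ together with negative semidefiniteness (the equality case of Zariski's lemma). It is not that $L_j+L_k$ spans the kernel of the form restricted to $\langle L_j,L_k\rangle$.
\end{enumerate}

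Your closing ``direct'' sketch is the paper's computation stated vaguely.

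Finally, apply $G\cdot F=0$ to every member $G\in|L_j+L_k|$, each of which is connected, not only to a general one. Then $\pi$ is constant on all fibers of $\pi_i$ and factors as $\beta\circ\pi_i$ with $\deg\beta=1$.
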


\begin{proof}
  Denote by $F$ the class of a fiber of $\pi$.
  For a complete irreducible curve $C\subseteq X$ we have $C\cdot F\geq0$, with equality if and only if $\pi(C)$ is a point; in particular, $F^2=0$.
  Moreover, all fibers of $\pi$ are connected: in the Stein factorization $X\to B'\to\PP^1$ the finite morphism $B'\to\PP^1$ is birational, a general fiber being irreducible, hence an isomorphism onto $\PP^1$.

  \ref{pr:fibrations-characterization:vertex}$\Rightarrow$\ref{pr:fibrations-characterization:one-component},\ref{pr:fibrations-characterization:image} is Corollary~\ref{cr:vertex-fibration-properties}\ref{cr:vertex-fibration-properties:image},\ref{cr:vertex-fibration-properties:fibers}, and \ref{pr:fibrations-characterization:image}$\Rightarrow$\ref{pr:fibrations-characterization:fiber} holds since the fiber over any point of $\PP^1\setminus\pi(Y)$ does not meet $Y$.

  \ref{pr:fibrations-characterization:one-component}$\Rightarrow$\ref{pr:fibrations-characterization:fiber}:
  if a general fiber meets only $L_i$, then $L_j\cdot F=L_k\cdot F=0$. So, $\pi$ contracts $L_j$ and $L_k$, and their images coincide since $L_j\cap L_k\neq\emptyset$.
  Hence some fiber equals $F_0=bL_j+cL_k+R$ with $b,c\geq1$ and $R$ effective without components $L_j,L_k$.
  From $F_0\cdot(L_j+L_k)=0$ and $(bL_j+cL_k)\cdot(L_j+L_k)=0$ we get $R\cdot(L_j+L_k)=0$, so $R$ is disjoint from $L_j\cup L_k$ and $R=0$ by the connectedness of $F_0$.

  \ref{pr:fibrations-characterization:fiber}$\Rightarrow$\ref{pr:fibrations-characterization:vertex}:
  let $F_0=aL_1+bL_2+cL_3$ be a fiber; then $F_0\cdot L_i=0$ whenever the coefficient at $L_i$ is nonzero.
  A single nonzero coefficient gives $F_0^2=-a^2\neq0$, and three nonzero ones give $-a+b+c=a-b+c=a+b-c=0$, i.e., $a=b=c=0$; hence exactly two are nonzero, say $F_0=bL_j+cL_k$.
  Every fiber $G$ of $\pi_i$ is a member of $|L_j+L_k|$, so $G\cdot F=0$ and $\pi$ contracts each component of the connected curve $G$; thus $\pi$ is constant on the fibers of $\pi_i$ and factors as $\pi=\beta\circ\pi_i$ for a morphism $\beta\colon\PP^1\to\PP^1$.
  Since a general fiber of $\pi$ is irreducible, while the fibers of $\pi_i$ are nonempty and pairwise disjoint, $\deg\beta=1$, i.e., $\pi$ and $\pi_i$ define the same fibration.
\end{proof}

\begin{corollary}\label{cr:vertex-fibration-characterization}
  The fibrations of $Y$ attached to the vertices of $T(Y)$ are exactly the restrictions to $Y$ of the $\PP^1$-fibrations $\pi\colon X\to\PP^1$ of triangle completions $(X,D)$ of $Y$ such that $\pi(Y)\neq\PP^1$.
\end{corollary}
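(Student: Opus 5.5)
The plan is to prove the two inclusions separately; both reduce almost immediately to Definition~\ref{def:vertex-fibration} and Proposition~\ref{pr:fibrations-characterization}.

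For the inclusion ``attached $\subseteq$ restrictions'', let $v$ be a vertex of $T(Y)$ and choose a triangle $\Delta$ containing $v$. Such a triangle exists: by Definition~\ref{def:complex-T} every vertex comes from a boundary component of some $(0,0)$- or triangle completion, and every edge lies in two triangles by Lemma~\ref{lm:complex-T-basic}. By definition $\pi_v=\pi_v^\Delta|_Y$, where $\pi_v^\Delta$ is a $\PP^1$-fibration of the triangle completion $(X_\Delta,D_\Delta)$. Corollary~\ref{cr:vertex-fibration-properties}\ref{cr:vertex-fibration-properties:image} gives $\pi_v^\Delta(Y)=\AA^1\neq\PP^1$, so $\pi_v$ belongs to the right-hand class.

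For the converse, let $(X,D)$ be a triangle completion of $Y$ and let $\pi\colon X\to\PP^1$ be a $\PP^1$-fibration with $\pi(Y)\neq\PP^1$. Proposition~\ref{pr:fibrations-characterization}, via the implication (iv)$\Rightarrow$(i), shows that $\pi$ is the same fibration as some $\pi_i$ of Lemma~\ref{lm:fibrations}, that is, $\pi=\beta\circ\pi_i$ for an automorphism $\beta$ of $\PP^1$. Let $\Delta$ be the triangle of $T(Y)$ given by $(X,D)$, and let $v$ be the vertex defined by $L_i$. Under the identification of Lemma~\ref{lm:completion-unique}, $(X,D)\cong(X_\Delta,D_\Delta)$ over $Y$ with $L_i=L_v^\Delta$, so $\pi_i=\pi_v^\Delta$. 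Restricting to $Y$ then shows that $\pi|_Y$ and $\pi_v$ define the same fibration.

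The only point needing care is this last identification: we must check that the isomorphism over $Y$ carries $L_i$ to $L_v^\Delta$ and the pencil $|L_j+L_k|$ to $|D_\Delta-L_v^\Delta|$. This holds because the isomorphism is unique and matches inner components, and because both fibrations are defined intrinsically by linear systems of boundary components. Equality ``up to isomorphism of the base'' is exactly the notion of same fibration used in the statement, so beyond this bookkeeping the argument is immediate.
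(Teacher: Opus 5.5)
Your proposal is correct and is essentially the paper's argument: the paper proves this in one line by citing the equivalence (i)$\Leftrightarrow$(iv) of Proposition~\ref{pr:fibrations-characterization}, whose direction (i)$\Rightarrow$(iv) is exactly Corollary~\ref{cr:vertex-fibration-properties}\ref{cr:vertex-fibration-properties:image}, as you use. One small precision: the vertices of $T(Y)$ are all inner components of $Y$, so they are not boundary components of some completion by definition; the existence of a triangle containing $v$ is better justified by Lemma~\ref{lm:complex-T-realization}\ref{lm:complex-T-realization:sub} or by Lemma~\ref{lm:vertex-fan}.
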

\begin{proof}
  Follows from Proposition~\ref{pr:fibrations-characterization}(i,iv).
\end{proof}

\begin{remark}\label{rm:interior-conics}
  The condition $\pi(Y)\neq\PP^1$ in Corollary~\ref{cr:vertex-fibration-characterization} cannot be dropped.
  For example, consider a smooth cubic surface $X\subseteq\PP^3$ and a hyperplane section $D=L_1+L_2+L_3$ that is a triangle of lines, and let $L$ be one of the $24$ lines on $X$ not contained in $\supp D$.
  Then $L$ meets exactly one component of $D$, say $L_3$, since $L\cdot D=1$ for the hyperplane section $D$.
  The planes containing $L$ cut out on $X$ a pencil of conics (hyperplane sections minus $L$). It defines a $\PP^1$-fibration $\pi\colon X\to\PP^1$ whose general fiber $F$ satisfies
  \[
  F\cdot L_1=F\cdot L_2=1,\qquad F\cdot L_3=0.
  \]
  Since $F$ meets two components of $D$, Proposition~\ref{pr:fibrations-characterization} shows that $\pi$ is not attached to a vertex of $T(Y)$ and $\pi|_Y$ is surjective onto $\PP^1$.
\end{remark}

We conclude this section with an intrinsic characterization of the fibrations attached to the vertices of $T(Y)$: they are exactly the fibrations of $Y$ over an affine base whose general fibers are rational curves.
In particular, this description does not refer to a completion of $Y$.
We start with a standard observation.

\begin{lemma}\label{lm:units}
  Every invertible regular function on $Y$ is constant.
\end{lemma}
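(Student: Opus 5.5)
Let $f$ be an invertible regular function on $Y$. We fix a triangle completion $(X,D)$ of $Y$, with $D=L_1+L_2+L_3$, and regard $f$ as a rational function on the normal projective surface $X$. Since $f$ and $f^{-1}$ are both regular on $Y$, the principal divisor $\operatorname{div}(f)$ is supported on $\supp D$. Hence
\[
\operatorname{div}(f)=a_1L_1+a_2L_2+a_3L_3,\qquad a_i\in\ZZ.
\]
The $L_i$ lie in the regular locus, so their intersection numbers with this principal divisor make sense. Moreover, $\operatorname{div}(f)\cdot L_i=0$ for each $i$, because a principal divisor restricted to a complete curve in the smooth locus has degree zero.

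These equalities give a linear system in $(a_1,a_2,a_3)$. Using $L_i^2=-1$ and $L_i\cdot L_j=1$ for $i\neq j$, it reads
\[
-a_1+a_2+a_3=a_1-a_2+a_3=a_1+a_2-a_3=0.
\]
This is the same computation already carried out in the proof of Proposition~\ref{pr:fibrations-characterization}. Adding the equations pairwise gives $2a_3=2a_2=2a_1=0$, so all $a_i$ vanish and $\operatorname{div}(f)=0$ on $X$.

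It remains to deduce that $f$ is constant. On a normal projective surface, a rational function with trivial divisor is regular in codimension one, hence regular everywhere by normality. It is therefore a global regular function on the irreducible projective variety $X$, and so a constant. Here normality of $X$ is Section~\ref{sec:fibrations}'s standing observation that any NC-completion of the normal surface $Y$ is normal.

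The main point of care is the claim that $\operatorname{div}(f)\cdot L_i=0$ is legitimate even though $X$ may be singular. The singular points lie in $Y$, away from $D$, and $\operatorname{div}(f)$ is Cartier near $D$. Then $\operatorname{div}(f)\cdot L_i=\deg\bigl(\cO_X(\operatorname{div} f)|_{L_i}\bigr)=\deg \cO_{L_i}=0$, since $\cO_X(\operatorname{div}f)\cong\cO_X$ via multiplication by $f$. Alternatively, one can pass to a resolution of $X$ that is an isomorphism near $D$ and argue there.
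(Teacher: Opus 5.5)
Your proposal is correct and follows the paper's proof essentially step for step: the divisor of $f$ is supported on $L_1+L_2+L_3$, intersecting it with each $L_i$ forces all coefficients to vanish, and $f$ is then a regular function on the projective surface $X$, hence constant. Your appeal to normality in the last step is valid, though it is not needed: $f$ is already regular on $Y$, and $X$ is smooth along $\supp D$, so $\operatorname{div}(f)=0$ there already makes $f$ regular on all of $X$.
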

\begin{proof}
  Let $(X,D)$ be a triangle completion of $Y$ with $D=L_1+L_2+L_3$ and let $f$ be an invertible regular function on $Y$.
  Since $f$ has neither zeros nor poles on $Y=X\setminus\supp D$, we have $\operatorname{div}_X(f)=aL_1+bL_2+cL_3$ for some $a,b,c\in\ZZ$.
  A principal divisor is Cartier and has degree zero on every complete curve, so $\operatorname{div}_X(f)\cdot L_i=0$ for $i=1,2,3$, i.e., $-a+b+c=a-b+c=a+b-c=0$, whence $a=b=c=0$ as in the proof of Proposition~\ref{pr:fibrations-characterization}.
  Thus $f$ is an invertible regular function on the connected projective surface $X$, hence constant.
\end{proof}

\begin{proposition}\label{pr:affine-fibrations}
  Let $\pi\colon Y\to B$ be a surjective morphism onto a smooth affine curve whose general fiber is isomorphic to $\AA^1$ or to $\AA^1\setminus\{0\}$.
  Then $B\cong\AA^1$, a general fiber of $\pi$ is isomorphic to $\AA^1\setminus\{0\}$, and $\pi$ and $\pi_v$ define the same fibration of $Y$ for a unique vertex $v$ of $T(Y)$.
  Consequently, $v\mapsto\pi_v$ is a bijection between the vertices of $T(Y)$ and the fibrations of $Y$ as above, considered up to an isomorphism of the base.
\end{proposition}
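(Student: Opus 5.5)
Fix a triangle completion $(X,D)$ of $Y$ with $D=L_1+L_2+L_3$. The plan is to extend $\pi$ to a $\PP^1$-fibration of $X$ and then apply Proposition~\ref{pr:fibrations-characterization}.

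First, embed $B$ into its smooth completion $\bar B$. Then $\pi$ extends to a rational map $X\dashrightarrow\bar B$. We resolve its indeterminacy by blowups $\tau\colon\tilde X\to X$; since $\pi$ is a morphism on $Y$, all centers lie over $\supp D$, which is in the regular locus. This gives a morphism $\tilde\pi\colon\tilde X\to\bar B$. The general fiber of $\pi$ is rational, so the general fiber of $\tilde\pi$ is a smooth rational curve. Since $X$ is rational, $\bar B$ is dominated by a rational surface, hence $\bar B\cong\PP^1$ by Lüroth. Next, $B$ is affine and $\pi$ is surjective, so $\bar B\setminus B$ is nonempty. The fiber of $\tilde\pi$ over a point of $\bar B\setminus B$ is then supported on the total transform of $D$. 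By Lemma~\ref{lm:units}, $\mathcal O(B)^*\hookrightarrow\mathcal O(Y)^*=\KK^*$, and this forces $\bar B\setminus B$ to be a single point, so $B\cong\AA^1$. Indeed, if there were two points at infinity, a nonconstant unit on $B\subset\PP^1$ would pull back to a nonconstant unit on $Y$.

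The main obstacle is showing that $\pi$ already extends to a morphism $X\to\PP^1$, i.e. that no blowups are needed. My first attempt would be numerical. Let $\Phi$ be the closure in $X$ of a general fiber of $\pi$; then $\Phi$ is a rational curve with $\Phi^2\ge0$, and its pencil has its base points on $D$. The fiber at infinity pushed down to $X$ is an effective divisor $aL_1+bL_2+cL_3$ that is linearly equivalent to $\Phi$. I would compute intersections against this divisor. Since $\Phi\cdot L_i\ge0$ and the intersection matrix of $D$ has signature such that a combination $aL_1+bL_2+cL_3$ with nonnegative intersections with all $L_i$ is nef only in limited ways, one can push toward $\Phi^2=0$. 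Concretely, $(aL_1+bL_2+cL_3)^2=-a^2-b^2-c^2+2(ab+bc+ca)$, and the constraints $\Phi\cdot L_i\ge0$ give $-a+b+c\ge0$ and its permutations. With $\Phi^2=0$ the pencil is base point free, so $\pi$ extends to a morphism $X\to\PP^1$.

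If a base point survived, the fiber of $\tilde\pi$ at infinity would contain exceptional curves over a point of $D$. Then some component of $\tau^{-1}(D)$ would be horizontal and would meet the general fiber. I would use the rationality of the fibers and the Euler characteristic of the fiber at infinity to rule this out, and alternatively pass to an inner-blowup model via Lemma~\ref{lm:complex-T-realization}. Once a $\PP^1$-fibration $\bar\pi\colon X'\to\PP^1$ is obtained, possibly on a triangle completion $X'$ obtained through inner blowups, it has a fiber supported on the boundary. Proposition~\ref{pr:fibrations-characterization} (ii)$\Rightarrow$(i), applied to that completion, then gives $\pi=\pi_v$ for some vertex $v$. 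By Corollary~\ref{cr:vertex-fibration-properties}\ref{cr:vertex-fibration-properties:fibers}, the general fiber of $\pi$ is $\AA^1\setminus\{0\}$, which excludes the $\AA^1$ case. Uniqueness of $v$ follows from Corollary~\ref{cr:vertex-fibration-properties}\ref{cr:vertex-fibration-properties:injective}. The bijection follows because each $\pi_v$ itself satisfies the hypotheses by Corollary~\ref{cr:vertex-fibration-properties}\ref{cr:vertex-fibration-properties:image},\ref{cr:vertex-fibration-properties:fibers}.
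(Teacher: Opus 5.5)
There is a genuine gap at the step you flag as the main obstacle. The claim that $\pi$ extends to a morphism $X\to\PP^1$ without blowups is false for all but three of the fibrations in question. Suppose $\pi_v$ extended to a $\PP^1$-fibration of $X$. Since $\pi_v(Y)=\AA^1$, Proposition~\ref{pr:fibrations-characterization} would identify it with one of $\pi_1,\pi_2,\pi_3$, and Corollary~\ref{cr:vertex-fibration-properties}\ref{cr:vertex-fibration-properties:injective} would force $v\in\{L_1,L_2,L_3\}$. But $T(Y)$ has infinitely many vertices.

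Concretely, on the Markov surface take $v=E_z$, so that $\pi_v=z\circ\sigma_z=xy-z$. The closure of its general fiber is $\Phi\sim L_x+L_y+2L_z$. This satisfies all your inequalities ($\Phi\cdot L_x=\Phi\cdot L_y=2$, $\Phi\cdot L_z=0$), yet $\Phi^2=4$. The pencil has a base point at the node $p_z$, where its general member has multiplicity $2$, and it becomes free only after blowing up $p_z$, cf.\ Lemma~\ref{lemma: sigma_action}. So your numerics cannot force $\Phi^2=0$, and no Euler characteristic argument can exclude surviving base points.

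Your fallback, passing to an inner-blowup model via Lemma~\ref{lm:complex-T-realization}, presupposes what has to be proved. That lemma applies only once you know the resolution of indeterminacy $\sigma\colon(\hat X,\hat D)\to(X,D)$ consists of inner blowups only.

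The paper proves this with an idea your proposal lacks. Riemann--Roch gives an effective $\Theta\sim K_X+D$. Since $K_{X_1}+D_1=\mu^*(K_{X_0}+D_0)+\delta E$ with $\delta=1$ precisely for outer blowups, one gets $K_{\hat X}+\hat D\sim\hat\Theta\ge0$, with every non-inner component of $\hat D$ in $\supp\hat\Theta$. Adjunction on a general fiber $F\cong\PP^1$ of $\hat\pi$ gives $\hat D\cdot F=2+\hat\Theta\cdot F\ge2$. On the other hand, $\hat D\cdot F=\#(F\cap\supp\hat D)\le2$ by the hypothesis on the fibers of $\pi$. Hence equality holds: the $\AA^1$ case is excluded directly, and every horizontal component of $\hat D$ is inner, so $\sigma$ is inner.

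A second step is still needed, which you also do not supply. For a horizontal component $S$, the fibration $\pi_s$ extends to $\hat X$, contracting every component of $\hat D$ except $S$. Since $\hat\pi^*(\infty)$ is supported on vertical boundary components, $F\cdot G=0$ for a general fiber $G$ of this extension, so $\pi$ and $\pi_s$ coincide. Your closing appeal to Proposition~\ref{pr:fibrations-characterization} needs a triangle completion on which $\pi$ is a morphism, and your argument never produces one.

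The parts on $B\cong\AA^1$, the uniqueness of $v$, and the final bijection are fine.
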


\begin{proof}
  Fix a triangle completion $(X,D)$ of $Y$ with $D=L_1+L_2+L_3$.
  We may assume $X$, and hence $Y$, to be smooth: the minimal resolution of $(X,D)$ is a triangle completion of the minimal resolution $r\colon\widetilde Y\to Y$ (as in the proof of Proposition~\ref{pr:triangle-surface-anticanonical} below), and $r$ identifies $T(\widetilde Y)$ with $T(Y)$ and the fibrations $\pi$, $\pi_v$ with $\pi\circ r$, $\pi_v\circ r$, mapping a general fiber of $\pi\circ r$ isomorphically onto a general fiber of $\pi$.

  The curve $B$ is rational, being the image of the rational surface $Y$; since a nonconstant invertible function on $B$ would pull back to one on $Y$, contradicting Lemma~\ref{lm:units}, we get $B\cong\AA^1$ and identify $\overline B\cong\PP^1$ with $\AA^1\cup\{\infty\}$.

  Let $\sigma\colon(\hat X,\hat D)\to(X,D)$ be a minimal composition of blowups resolving the indeterminacy of the rational map $X\dashrightarrow\PP^1$ defined by $\pi$; its centers lie over $\supp D$, so $\hat X\setminus\supp\hat D=Y$, and the resulting morphism $\hat\pi\colon\hat X\to\PP^1$ is a $\PP^1$-fibration with $\hat\pi|_Y=\pi$ and $\hat\pi(Y)=\AA^1$.
  Recall that a component of $\hat D$ is \emph{vertical} if it is contained in a fiber of $\hat\pi$, and \emph{horizontal} otherwise.

  \begin{claim}\label{cl:two-inner-points}
    A general fiber $F$ of $\hat\pi$ meets $\supp\hat D$ in exactly two points, which lie on horizontal components of $\hat D$ that are inner components of $Y$.
  \end{claim}
  \begin{claimproof}
    Since $(K_X+D)\cdot L_i=0$ for each $i$, the Riemann--Roch theorem on the rational surface $X$ gives $\chi(K_X+D)=1$, while $h^2(K_X+D)=h^0(-D)=0$; hence $K_X+D\sim\Theta$ for an effective divisor $\Theta$.
    
    Recall that for the blowup $\mu\colon(X_1,D_1)\to(X_0,D_0)$ of an NC-pair at a boundary point, with exceptional curve $E$, we have $K_{X_1}+D_1=\mu^*(K_{X_0}+D_0)+\delta E$, where $\delta=0$ at a node and $\delta=1$ otherwise.
    By induction along $\sigma$, applying this at each step, we derive that every exceptional component that is not inner for $Y$ appears in $\hat\Theta:=\sigma^*\Theta+\sum\delta_kE_k$ with coefficient at least $1$. 
    Thus, $K_{\hat X}+\hat D\sim\hat\Theta$ for an effective divisor $\hat\Theta$ whose support contains every component of $\hat D$ that is not an inner component of $Y$.

    Since $F\cong\PP^1$ and $F^2=0$, adjunction gives $\hat D\cdot F=(K_{\hat X}+\hat D)\cdot F+2=\hat\Theta\cdot F+2\geq2$, the general fiber $F$ being no component of $\hat\Theta$.
    On the other hand, $F$ is disjoint from the vertical components of $\hat D$ and meets every horizontal component $H$ transversally, since the finite morphism $\hat\pi|_H$ is generically unramified.
    So, $\hat D\cdot F$ equals the number of points of $F\cap\supp\hat D$, which is at most $2$ by the assumption on $\pi$.
    Hence $F$ meets $\supp\hat D$ in exactly two points and $\hat\Theta\cdot F=0$, so every component of $\hat\Theta$ is vertical and every horizontal component of $\hat D$ is inner for $Y$.
  \end{claimproof}

  Thus a general fiber of $\pi$ is isomorphic to $\AA^1\setminus\{0\}$, and a general fiber of $\hat\pi$ meets $\supp\hat D$ along one or two inner components of $Y$, and nothing else.
  Then $\sigma$ is a composition of inner blowups of $(X,D)$, and the triangulation $T(\hat D)$ is, by Lemma~\ref{lm:complex-T-realization}\ref{lm:complex-T-realization:sub}, a finite subcomplex of $T(Y)$.
  Now we may forget about the initial completion $(X,D)$ and work with the NC-completion $(\hat X,\hat D)$ of $Y$ and the $\PP^1$-fibration $\hat\pi$ of $\hat X$ extending $\pi$.

  \begin{claim}\label{cl:vertex-fibration-extends}
    Let $S$ be a component of $\hat D$ and let $s$ be the vertex of $T(\hat D)$ it defines.
    Then $\pi_s$ extends to a $\PP^1$-fibration $\hat\pi_s\colon\hat X\to\PP^1$, and $S$ is the only component of $\hat D$ not contracted by $\hat\pi_s$.
  \end{claim}
  \begin{claimproof}
    It is enough to consider the triangle completion $(X_\Delta,D_\Delta)$ defined by a triangle $\Delta$ of $T(\hat D)$ containing $s$.
    Indeed, $(\hat X, \hat D)$ is obtained from $(X_\Delta,D_\Delta)$ by a sequence of inner blowups, so the $\PP^1$-fibration $\pi_s$ of $(X_\Delta,D_\Delta)$ extends to a $\PP^1$-fibration of $X_\Delta$, all exceptional components being vertical.
  \end{claimproof}

  \begin{claim}\label{cl:single-horizontal}
    The horizontal part of $\hat D$ is a single component $S$, and $\pi$ and $\pi_s$ define the same fibration of $Y$, where $s$ is the vertex of $T(Y)$ defined by $S$.
  \end{claim}
  \begin{claimproof}
    By Claim~\ref{cl:two-inner-points}, the divisor $\hat D$ has a horizontal component $S$; let $\hat\pi_s$ be the corresponding $\PP^1$-fibration and  $G$ be its general fiber. 
    Then $C\cdot G=0$ for every component $C\neq S$ of $\hat D$.
    Since $\hat\pi(Y)=\AA^1$, the fiber $\hat\pi^*(\infty)$ is supported on $\supp\hat D$, and its components are vertical, hence different from $S$.
    So a general fiber $F$ of $\hat\pi$ satisfies $F \cdot G = 0$.
    Then linear systems $|F|$ and $|G|$ coincide, and the claim follows.
  \end{claimproof}

   The final bijectivity claim follows from Corollary~\ref{cr:vertex-fibration-properties}: each $\pi_v$ maps $Y$ onto $\AA^1$ with general fiber isomorphic to $\AA^1\setminus\{0\}$ by \ref{cr:vertex-fibration-properties:image},\ref{cr:vertex-fibration-properties:fibers}, and the uniqueness of $v$ is \ref{cr:vertex-fibration-properties:injective}.
\end{proof}

\begin{remark}\label{rm:affine-fibrations-sharpness}
  The hypothesis that the base is affine cannot be dropped: the interior conic pencils of Remark~\ref{rm:interior-conics} restrict to fibrations $Y\to\PP^1$ whose general fibers are isomorphic to $\AA^1\setminus\{0\}$, but which are attached to no vertex of $T(Y)$.
  Note also that the construction of $\hat\pi$ and Claim~\ref{cl:two-inner-points} of the proof do not use the affineness of the base and show that $Y$ admits no fibration over a smooth curve whose general fiber is isomorphic to $\AA^1$.
\end{remark}

\section{Triangle surfaces}\label{sec:triangle-surfaces}
In this section we restrict to affine surfaces and show that affine triangle surfaces are in fact cubic surfaces.

\begin{definition}\label{def:triangle-surface}
A normal affine surface $Y$ is called a \textit{triangle surface} if it admits a completion $(X,D)$ that is a triangle pair.
\end{definition}

\begin{remark}\label{rm:affine-boundary}
  Let $(X,D)$ be an NC-completion of an affine surface $Y$ and $C\subset X$ be a complete irreducible curve with $C\not\subseteq\supp D$.
  Since $Y$ contains no complete curves, $D\cdot C\geq 1$.
\end{remark}

\begin{proposition}\label{pr:triangle-surface-anticanonical}
  Assume that $(X,D)$ is a triangle completion of a triangle surface $Y$.
  Then $X$ is a (possibly singular) del Pezzo surface of degree 3, and 
  the divisor $D$ is anticanonical.
\end{proposition}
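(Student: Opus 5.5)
Write $D=L_1+L_2+L_3$. The plan is to show three things in turn: $D\sim -K_X$, $D$ is ample, and $D^2=3$. Since $D$ lies in the regular locus of $X$, all intersection numbers with its components make sense, and the singularities of $X$ are isolated points of $Y$.

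\emph{Step 1: $K_X+D\sim 0$.} For each component, $L_i^2=-1$, $L_i\cdot(D-L_i)=2$, and adjunction gives $K_X\cdot L_i=-1$. Hence $(K_X+D)\cdot L_i=0$ for $i=1,2,3$. As in Claim~\ref{cl:two-inner-points}, I would use Riemann--Roch to write $K_X+D\sim\Theta$ with $\Theta\geq 0$. I expect this to go through on the minimal resolution, which is an isomorphism near $D$ and makes $X$ a rational surface. The goal is then $\Theta=0$.
\begin{itemize}
\item No component of $\Theta$ lies in $\supp D$. If $\Theta=aL_1+bL_2+cL_3+\Theta'$ with $\Theta'$ avoiding $\supp D$, then $\Theta'\cdot L_i\geq0$. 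The equations $\Theta\cdot L_i=0$ give $a-b-c=\Theta'\cdot L_1\geq 0$ and the two cyclic analogues. Summing yields $-(a+b+c)\geq0$, so $a=b=c=0$.
\item $\Theta$ has no component at all. Now $\Theta\cdot L_i=0$ for all $i$ and $\Theta$ has no boundary component, so $\Theta\cap\supp D=\emptyset$. Every component of $\Theta$ would therefore be a complete curve in the affine surface $Y$, which Remark~\ref{rm:affine-boundary} rules out.
\end{itemize}
So $\Theta=0$, and $D$ is anticanonical; in particular $-K_X$ is Cartier.

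\emph{Step 2: $D$ is ample.} I would use Nakai--Moishezon. First, $D^2=\sum L_i^2+2\sum_{i<j}L_i\cdot L_j=-3+6=3>0$. Next, $D\cdot L_i=1>0$ for each boundary component. Finally, Remark~\ref{rm:affine-boundary} gives $D\cdot C\geq1$ for every other complete curve $C$. Nakai--Moishezon applies to Cartier divisors on a projective surface even with isolated singularities, or one can argue via the normalization. Hence $-K_X=D$ is ample and $K_X^2=3$.

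\emph{Step 3: singularities.} To call $X$ a possibly singular del Pezzo surface, the singularities must be controlled, e.g.\ shown to be rational (Gorenstein, since $K_X$ is Cartier). This is the step I expect to be the main obstacle, because a priori $X$ is only assumed to have isolated singularities.
\begin{itemize}
\item Normality comes from Section~\ref{sec:fibrations}: $Y$ is normal and $D$ lies in the regular locus.
\item For the singular points, I would first try the following argument. Let $r\colon\tilde X\to X$ be the minimal resolution and write $K_{\tilde X}=r^*K_X-Z$ with $Z\geq0$ supported on exceptional curves; $Z\geq 0$ because the resolution is minimal.
\item Consider $-K_{\tilde X}=r^*D+Z$. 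The triangle $\tilde D\cong D$ gives $h^0(-K_{\tilde X})\geq1$. Taking $\tilde D$ itself as a member of $|-K_{\tilde X}|$ should force $Z=0$, provided $|{-K_{\tilde X}}|$ contains $\tilde D$. Checking that is the delicate point.
\item Alternatively, the fibration structure of Lemma~\ref{lm:fibrations} shows $\tilde X$ is rational. For a rational surface with an elliptic-type cycle at infinity, the standard argument that a normal Gorenstein surface with ample anticanonical divisor has only rational double points, or else is a cone over an elliptic curve, excludes the elliptic cone. That cone is not rational, and its boundary is not a triangle of $(-1)$-curves.
\end{itemize}
With ADE singularities established, $X$ is a Gorenstein del Pezzo surface of degree $K_X^2=3$.
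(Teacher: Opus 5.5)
Your Steps 1 and 2 are essentially the paper's proof. Riemann--Roch on a resolution $\rho\colon\widetilde X\to X$, which is rational by the $\PP^1$-fibration of Lemma~\ref{lm:fibrations}, gives an effective divisor linearly equivalent to $K_{\widetilde X}+\widetilde D$. The relation $(K+D)\cdot D=0$ kills its boundary coefficients, and the remaining part misses the boundary and vanishes by Remark~\ref{rm:affine-boundary}. You push forward to $X$ first and then argue there, while the paper argues on $\widetilde X$ and then pushes forward; either order works, as long as the affineness step is done on $X$, where no $\rho$-exceptional curves remain. Ampleness is Nakai--Moishezon exactly as in the paper.

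Your Step 3 is not in the paper, which reads ``possibly singular del Pezzo'' as $-K_X\sim D$ being ample and Cartier. If you do want du Val singularities, your ``delicate point'' is already settled by running Step 1 on the minimal resolution. There the effective member $M$ of $|K_{\widetilde X}+\widetilde D|$ satisfies $M\sim\rho^*(K_X+D)-Z\sim -Z$ with $M,Z\ge 0$, which forces $Z=0$, so the resolution is crepant.
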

\begin{proof}
  Write $D=L_1+L_2+L_3$. Since $\Gamma(D)$ is a $3$-cycle of $(-1)$-curves, we have $L_i^2=-1$ and $L_i\cdot L_j=1$ for $i\neq j$, so that
  \[
    D^2=3,\qquad -K_X\cdot L_i=1\quad(i=1,2,3),\qquad (K_X+D)\cdot L_i=0,
  \]
  the second equality by the adjunction formula $(K_X+L_i)\cdot L_i=-2$. In particular $(K_X+D)\cdot D=0$.

  Let us show that the divisor $D$ is ample and Cartier.
  As $\supp D$ is disjoint from the singular locus of $X$, the divisor $D$ is Cartier. We verify the Nakai--Moishezon criterion. We have $D^2=3>0$. Let $C\subset X$ be an irreducible curve. If $C=L_i$ for some $i$, then $D\cdot C=1>0$. Otherwise $D\cdot C\geq1$ by Remark~\ref{rm:affine-boundary}. Thus $D$ is ample.

  In the rest of the proof we show that $K_X+D\sim0$.
  By Lemma~\ref{lm:fibrations}, the surface $X$ admits a $\PP^1$-fibration over $\PP^1$, hence $X$ is rational. Let $\rho\colon\widetilde X\to X$ be a resolution of singularities. Since $\rho$ is an isomorphism over $X_{\reg}\supseteq\supp D$, the total transform $\widetilde D:=\rho^{-1}(D)=\widetilde L_1+\widetilde L_2+\widetilde L_3$ is again a cycle of three $(-1)$-curves on the smooth rational surface $\widetilde X$, with the same intersection numbers; in particular $(K_{\widetilde X}+\widetilde D)\cdot\widetilde L_i=0$ for each~$i$.

  By Riemann--Roch on the smooth rational surface $\widetilde X$,
  \[
    \chi\bigl(K_{\widetilde X}+\widetilde D\bigr)=\chi(\O_{\widetilde X})+\tfrac12\,(K_{\widetilde X}+\widetilde D)\cdot\widetilde D=1,
  \]
  and $h^2(K_{\widetilde X}+\widetilde D)=h^0(-\widetilde D)=0$ by Serre duality, since $\widetilde D$ is effective and nonzero. Hence $h^0(K_{\widetilde X}+\widetilde D)\geq1$, and we may choose an effective divisor $M\in|K_{\widetilde X}+\widetilde D|$.

  Let us show that $M\cap\widetilde D=\varnothing$. Write $M=\sum_ia_i\widetilde L_i+B$, where $a_i\geq0$ and the effective divisor $B$ contains no $\widetilde L_i$. Since $\widetilde D\cdot\widetilde L_i=1$ for each $i$ and $M\cdot\widetilde D=(K_{\widetilde X}+\widetilde D)\cdot\widetilde D=0$, we obtain
  \[
    \sum\nolimits_ia_i=\Bigl(\sum\nolimits_ia_i\widetilde L_i\Bigr)\cdot\widetilde D=-B\cdot\widetilde D\leq0,
  \]
  so $a_i=0$ for all $i$ and $B\cdot\widetilde D=0$. Now each $B\cdot\widetilde L_i\geq0$ vanishes, and $M=B$ shares no component with $\widetilde D$; hence $M\cap\widetilde D=\varnothing$.

  Pushing forward, $\rho_*M$ is an effective divisor with $\rho_*M\sim\rho_*(K_{\widetilde X}+\widetilde D)=K_X+D$. Since $M$ is disjoint from $\widetilde D$ and $\rho$ contracts its exceptional curves to points of $Y$, the support of $\rho_*M$ is contained in $Y$. As $Y$ contains no complete curve (Remark~\ref{rm:affine-boundary}), we get $\rho_*M=0$ and $K_X+D\sim0$.

  So, we have $-K_X\sim D$, and the class $-K_X$ is ample. Hence $X$ is a del Pezzo surface of degree $(-K_X)^2=D^2=3$, and $D=L_1+L_2+L_3$ is anticanonical.
\end{proof}

\begin{corollary}\label{cr:triangle-cubic}
  A triangle surface $Y$ is isomorphic to a surface $xyz=f$ in $\AA^3$ for some $f\in\KK[x,y,z]$ of degree at most two.
\end{corollary}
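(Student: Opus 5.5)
We plan to use Proposition~\ref{pr:triangle-surface-anticanonical}: $X$ is a del Pezzo surface of degree $3$ with $D=L_1+L_2+L_3\sim -K_X$ ample and Cartier. The first step is to show that $-K_X$ is very ample and embeds $X$ into $\PP^3$ as a cubic surface. We will invoke the standard fact that the anticanonical system of a (Gorenstein, possibly singular) del Pezzo surface of degree $3$ is very ample with $h^0(-K_X)=4$. Singularities are isolated and away from $D$; if needed we check that they are rational double points via the minimal resolution $\widetilde X$, on which $K_{\widetilde X}+\widetilde D\sim 0$ as in the proof of Proposition~\ref{pr:triangle-surface-anticanonical}, so $-K_{\widetilde X}$ is nef and big.

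Riemann--Roch on $\widetilde X$ gives $h^0(-K)=1+K^2=4$. Very ampleness in degree $3$ is classical for Gorenstein del Pezzo surfaces; for instance, the image is a normal cubic surface and the map is birational onto it. Under this embedding $D$ is a hyperplane section $\{w=0\}$. Since $L_i\cdot D=1$, each $L_i$ is a line, and the hyperplane section consists of three lines forming a triangle, which are not concurrent because $\Gamma(D)$ is a $3$-cycle with three distinct nodes.

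Next we choose coordinates. The plane $\{w=0\}$ contains three non-concurrent lines, so after a linear change of $x,y,z$ these lines become $x=0$, $y=0$, $z=0$ in that plane. The cubic form $F(x,y,z,w)$ then satisfies $F(x,y,z,0)=\lambda xyz$ with $\lambda\neq0$, and we normalize $\lambda=1$. Hence
\[
F=xyz - w\,g(x,y,z,w)
\]
with $g$ quadratic. Dehomogenizing at $w=1$ gives $Y=X\setminus\{w=0\}\cong\{xyz=f(x,y,z)\}\subset\AA^3$, where $f=g(x,y,z,1)$ has degree at most two. This is an isomorphism of varieties, since $Y=X\cap\{w\neq0\}$ scheme-theoretically and $X$ is the reduced irreducible cubic.

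The main obstacle is the very ampleness of $-K_X$ for a possibly singular $X$. The first approach is to cite the classification of Gorenstein del Pezzo surfaces, after confirming that the singularities are canonical: $K_{\widetilde X}=\rho^*K_X$ holds because $K_{\widetilde X}+\widetilde D\sim0$ and $\rho^*D=\widetilde D$. As a fallback, we would prove directly that $|D|$ separates points and tangents. The idea is to use the restriction sequence $0\to\cO_X\to\cO_X(D)\to\cO_D(D)\to0$ with $h^1(\cO_X)=0$, which shows that $|D|$ restricts surjectively to $|\cO_D(D)|$, a degree-$1$ system on each line. We would then combine this with the ampleness of $D$ to argue that $|D|$ is base point free and defines a birational morphism of degree $3$ onto a cubic, which is then an isomorphism by normality.
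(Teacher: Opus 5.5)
Your proposal is correct and follows the paper's route. The paper likewise embeds $X$ anticanonically as a cubic in $\PP^3$ via Proposition~\ref{pr:triangle-surface-anticanonical}, taking very ampleness of $-K_X$ as standard, so that $D$ becomes a hyperplane section by three non-concurrent lines and the cubic form can be written $xyz-t\,g$; the only difference is that you do this last coordinate computation by hand, where the paper cites Dolgachev.

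Your extra checks are optional, with two small caveats. First, the proof of Proposition~\ref{pr:triangle-surface-anticanonical} only shows that $K_{\widetilde X}+\widetilde D$ is linearly equivalent to an effective $\rho$-exceptional divisor, so getting $K_{\widetilde X}+\widetilde D\sim 0$ on the minimal resolution needs a short negativity argument (there $K_{\widetilde X}$ is $\rho$-nef). Second, in your fallback the final isomorphism requires proving that the image cubic is normal, not just that $X$ is.
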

\begin{proof}
  By Proposition~\ref{pr:triangle-surface-anticanonical}, the anticanonical map $\phi_{|-K_X|}$ of the triangle completion $(X,D)$ is an embedding into $\PP^3$ that maps the triangle $D$ of $(-1)$-curves to a triangle of lines.
    Then we may choose homogeneous coordinates $(x:y:z:t)$ on $\PP^3$ so that $\phi_{|-K_X|}(D)$ is the hyperplane section $H=\{t=0\}\cap \phi(X)$, 
    and lines $\phi(L_1),\phi(L_2),\phi(L_3)$ are given  in $H$ by equations $x=0$, $y=0$, $z=0$ respectively.
    Since we have $H^2=D^2=3$, $\phi_{|-K_X|}(X)$ is a cubic surface in $\PP^3$, and the assertion follows from \cite[Theorem~8.3.2]{Dolg_CAG12}.
\end{proof}

\section{Cubic surfaces}\label{sec: cubic}

\begin{lemma}
\label{lemma: after_affine_transform}
Let $Y\subset\AA^3$ be the surface defined by an equation
\begin{equation} \label{eq: cubic equation}
xyz = f(x,y,z),
\end{equation}
where $f$ is a polynomial of degree at most two.
Then, after a suitable affine transformation of $\AA^3$, the equation~\eqref{eq: cubic equation} takes the form
\begin{equation}
\label{eq: delta_equation}
xyz = \delta_x x^2 + \delta_y y^2 + \delta_z z^2 + ax + by + cz + d
\end{equation}
with $\delta_x, \delta_y, \delta_z \in \{0,1\}$.
Moreover, letting $X:=\overline{Y}\subset\PP^3$ and $D:=X\setminus Y$, if $(X,D)$ is a triangle pair, then $\delta_x = \delta_y = \delta_z = 1$, i.e.\ the equation~\eqref{eq: cubic equation} takes the form
\begin{equation}
\label{eq: general_equation}
xyz = x^2 + y^2 + z^2 + ax + by + cz + d.
\end{equation}
\end{lemma}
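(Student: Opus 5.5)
The plan is to normalize the equation by translations and rescalings, and then, for the second part, to read off the boundary at infinity and check the triangle-pair condition.

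\textbf{Normalizing $f$.} Write
\[
f=\alpha x^2+\beta y^2+\gamma z^2+pxy+qyz+rxz+(\text{linear})+\text{const}.
\]
The mixed monomials are absorbed by translations. Substituting $x\mapsto x+s$, $y\mapsto y+t$, $z\mapsto z+u$ turns the left-hand side $xyz$ into
\[
xyz+u\,xy+t\,xz+s\,yz+(\text{terms of degree}\le1).
\]
Choosing $u=p$, $t=r$, $s=q$ cancels $pxy$, $rxz$, $qyz$ on the right-hand side. The pure squares $\alpha x^2$, $\beta y^2$, $\gamma z^2$ and the linear and constant parts change only in their lower-order terms, so the equation becomes
\[
xyz=\alpha x^2+\beta y^2+\gamma z^2+ax+by+cz+d.
\]
Next rescale $x\mapsto\lambda x$, $y\mapsto\mu y$, $z\mapsto\nu z$. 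The equation becomes
\[
\lambda\mu\nu\,xyz=\alpha\lambda^2x^2+\beta\mu^2y^2+\gamma\nu^2z^2+\dots,
\]
so after dividing by $\lambda\mu\nu$ the coefficient of $x^2$ is $\alpha\lambda/(\mu\nu)$, and similarly for the others. We want each nonzero coefficient among $\alpha,\beta,\gamma$ to become $1$. This means solving $\lambda/(\mu\nu)=\alpha^{-1}$, $\mu/(\lambda\nu)=\beta^{-1}$, $\nu/(\lambda\mu)=\gamma^{-1}$ for those coefficients that are nonzero. Multiplying two of these equations gives $\nu^{-2}=(\alpha\beta)^{-1}$, so $\nu^2=\alpha\beta$, and likewise for the other two. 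Since $\KK$ is algebraically closed, square roots exist, and we adjust signs so that the equations themselves (not only their products) hold. When some coefficients vanish there are fewer conditions and the free parameters suffice. This yields \eqref{eq: delta_equation}.

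\textbf{The triangle-pair case.} Homogenize with $t$. Then $X=\{xyz=t(\delta_xx^2+\delta_yy^2+\delta_zz^2)+t^2(\dots)\}$, and $D=X\cap\{t=0\}$ is the triangle of lines $xyz=0$ in the plane $t=0$. We compute the behaviour at a vertex, say $(0:0:1:0)$, in the affine chart $z=1$. There the equation reads
\[
xy=t(\delta_xx^2+\delta_yy^2+\delta_z)+t^2(\dots)+\dots
\]
If $\delta_z=1$, the right-hand side starts with the term $t$, so $X$ is smooth at this point and $D$ has a node there. If $\delta_z=0$, every monomial has degree at least $2$, so the point is singular on $X$. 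Since an NC-pair requires $\supp D$ to lie in the regular locus, this contradicts the triangle-pair assumption, hence $\delta_z=1$. The same argument at the other two vertices gives $\delta_x=\delta_y=1$.

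\textbf{Main obstacle.} The delicate step is the degree-$\le1$ terms in the chart computation. In that chart $x,y,t$ are local coordinates and the linear term $t$ must be isolated. Apart from $t\,\delta_z$, the dehomogenized right-hand side contains $t\,\delta_xx^2$, $t\,\delta_yy^2$, $t^2(ax+by+c)$ and $d\,t^3$, all of degree at least $2$. So the linear part is exactly $\delta_z t$, and it vanishes precisely when $\delta_z=0$, which settles the case split above. A secondary point to handle with some care is the choice of signs when extracting the square roots in the rescaling step.
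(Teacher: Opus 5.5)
Your proposal is correct and follows the paper's proof. You translate to kill the mixed terms, rescale so the nonzero square coefficients become $1$, and observe that $\delta_z=0$ makes the vertex $(0:0:1:0)$ of the triangle a singular point of $X$, which a triangle pair forbids; the paper reads this off from the partial derivatives on $\{t=0\}$, you from the missing linear term in the chart $z=1$. The one step you leave implicit is that the affine change of coordinates extends to an automorphism of $\PP^3$ preserving the plane $\{t=0\}$, so the triangle-pair hypothesis carries over to the normalized equation; the paper states this in a single phrase.
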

\begin{proof}
By shifting the origin we get rid of the mixed terms of $f$, and after rescaling the variables we arrive at the equation~\eqref{eq: delta_equation} with $\delta_x, \delta_y, \delta_z \in \{0,1\}$. The surface $X$ is given by the homogeneous equation
\begin{equation*}
xyz = t(\delta_x x^2 + \delta_y y^2 + \delta_z z^2) + t^2 (ax + by + cz) + t^3 d,
\end{equation*}
and the equations of its singular locus at infinity take the form $xy=xz=yz=0$ and $\delta_x x^2+ \delta_y y^2+\delta_z z^2=0$. 

Assume now that $(X,D)$ is a triangle pair; this property is preserved by the above affine transformations. 
Then $D$ lies in the regular locus of $X$ by definition. 
Either of $\delta_x, \delta_y, \delta_z$ being equal to zero gives a singular point on $D$. Hence $\delta_x = \delta_y = \delta_z =1$.
\end{proof}

By Corollary~\ref{cr:triangle-cubic}, every triangle surface is isomorphic to a surface~\eqref{eq: cubic equation}, and its triangle completion is the closure of that surface in $\PP^3$; so by Lemma~\ref{lemma: after_affine_transform} its equation can be brought to the form~\eqref{eq: general_equation}. Conversely, Proposition~\ref{prop: NC_is_triangle} below shows that every surface~\eqref{eq: general_equation} is a triangle surface. Together these identify the class of triangle surfaces with the class of such cubics.

From now on we assume that $Y$ is defined by the equation~\eqref{eq: general_equation}. Its completion $X$ in $\PP^3$ is given by the corresponding homogeneous equation
\begin{equation}
\label{eq: general_equation_homogeneous}
xyz = t(x^2 + y^2 + z^2) + t^2 (ax + by + cz) + t^3 d
\end{equation}
with respect to homogeneous coordinates $(x:y:z:t)$ in $\PP^3$. The boundary divisor $D=X\backslash Y$ consists of three lines
\begin{align*}
L_x = \{x = t = 0\}, \\
L_y = \{y = t = 0\}, \\
L_z = \{z = t = 0\}.
\end{align*}

\begin{lemma}\label{lm:cubic-normal}
  The surface $X\subset\PP^3$ given by \eqref{eq: general_equation_homogeneous} is irreducible and normal.
  Its singular locus is finite and does not meet the plane $\{t=0\}$.
\end{lemma}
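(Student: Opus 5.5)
First I will show that the singular locus of $X$ misses the plane $\{t=0\}$. Write $F=xyz-t(x^2+y^2+z^2)-t^2(ax+by+cz)-t^3d$. On $\{t=0\}$ the partial derivatives reduce to
\[
F_x=yz,\quad F_y=xz,\quad F_z=xy,\quad F_t=-(x^2+y^2+z^2).
\]
Their common zeros force two of $x,y,z$ to vanish, so the point is $(1:0:0:0)$ up to permutation. There $F_t=-1\neq0$. Hence $X$ is smooth along $D=L_x+L_y+L_z$; this is exactly the computation in the proof of Lemma~\ref{lemma: after_affine_transform} with $\delta_x=\delta_y=\delta_z=1$.

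Next comes irreducibility. Suppose $F=GH$ with $\deg G=1$. Then the intersection of $X$ with the plane $\{t=0\}$ is the triangle $xyz=0$. If $G$ were $t$ up to scalar, $t$ would divide $xyz$, which is impossible, so $\{G=0\}$ meets $\{t=0\}$ in a line contained in $X$. That line is one of $L_x,L_y,L_z$, say $L_x$, so $G=x+\lambda t$. The quadric $\{H=0\}$ contains $L_y$ and $L_z$, so the two components $\{G=0\}$ and $\{H=0\}$ meet along a curve that meets $\{t=0\}$. Points of this curve are singular points of $X$. This contradicts the first step, so $X$ is irreducible.

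Finiteness of the singular locus then follows from irreducibility. An irreducible hypersurface is reduced because $F$ is squarefree, so its singular locus is a proper closed subset and has dimension at most $1$. A curve in $\PP^3$ always meets the plane $\{t=0\}$, and by the first step no singular point lies there. Hence the singular locus has dimension $0$ and is finite.

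Normality follows from Serre's criterion. A hypersurface is Cohen--Macaulay, so it satisfies $S_2$, and isolated singularities on a surface give $R_1$. I expect no serious obstacle. The only delicate step is ruling out reducibility, and the decisive observation there is that any two components of a reducible surface meet along a curve, which necessarily reaches the plane at infinity.
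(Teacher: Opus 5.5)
Your proof is correct, but it obtains finiteness and irreducibility by a different route than the paper.

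After the same check at infinity, the paper bounds the affine part of $X_{\sing}$ by a Gröbner basis computation. It treats $a,b,c$ as variables and finds an element with leading term $z^5t$ that involves only $z,t$ and survives every specialization. Hence $z/t$ takes finitely many values on $X_{\sing}$, and the same holds for $x/t$ and $y/t$. Irreducibility is then deduced from finiteness, since two components would meet along a curve of singular points.

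You reverse the order. Irreducibility comes from the triangle at infinity: a linear factor $G$ must cut out one of $L_x,L_y,L_z$, and the cofactor $H$ then vanishes on the other two lines. So $G=H=0$ already at the vertices of the triangle, e.g.\ $(0:0:1:0)$, which would be singular points on $\{t=0\}$. Finiteness then follows because every curve in $\PP^3$ meets the plane $\{t=0\}$.

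Your argument is conceptual and uniform in $(a,b,c,d)$. It avoids both the machine computation and the care needed to pass from generic to specialized parameters. The paper's computation adds only explicit polynomial constraints on the possible singular points.

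You could shorten the argument further. $X_{\sing}$ is a closed subset of $\PP^3$ disjoint from the plane $\{t=0\}$, so it is finite with no irreducibility input at all. Irreducibility then follows from either your argument or the paper's.
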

\begin{proof}
  Consider the polynomial $F=xyz-t(x^2+y^2+z^2)-t^2(ax+by+cz)-t^3d$ from the equation~\eqref{eq: general_equation_homogeneous}.

  We first treat the plane at infinity. On $\{t=0\}$ the four partial derivatives of $F$ reduce to
  \begin{equation*}
  \frac{\partial F}{\partial x}=yz,\qquad
  \frac{\partial F}{\partial y}=xz,\qquad
  \frac{\partial F}{\partial z}=xy,\qquad
  \frac{\partial F}{\partial t}=-(x^2+y^2+z^2),
  \end{equation*}
  so a singular point of $X$ on $\{t=0\}$ would satisfy $xy=xz=yz=0$ together with $x^2+y^2+z^2=0$, which forces $x=y=z=0$. There is no such point of $\PP^3$, so $X$ has no singularities at infinity.

  Away from $\{t=0\}$ we bound the singular locus using only the first three partial derivatives; this suffices, since their common zero locus contains the singular locus $X_{\sing}$.
  The Gr\"obner basis of the ideal $\left(\frac{\partial F}{\partial x},\frac{\partial F}{\partial y},\frac{\partial F}{\partial z}\right)$ with respect to the lexicographic order $x\succ y\succ z\succ t\succ a\succ b\succ c$ in the polynomial ring $\QQ[x,y,z,t,a,b,c]$, in which $a,b,c$ are treated as variables, contains a polynomial $P$ with leading monomial $z^5t$ and leading coefficient $1$.
  In particular $P$ involves neither $x$ nor $y$, and, being monic, it stays nonzero under any specialization of $a,b,c$ to scalars of $\KK$.
  Since the ideal is homogeneous in $x,y,z,t$, so is $P$, and a nonzero homogeneous polynomial in the two variables $z,t$ has finitely many zeros in $\PP^1$; as $t\neq0$ in the region under consideration, the coordinate $\frac{z}{t}$ admits only finitely many values at singular points of $X$.
  Proceeding in the same way for the two other coordinates, we conclude that $X_{\sing}$ is finite.

  In particular, $X$ is irreducible: otherwise its irreducible components would be of dimension two and would meet in curves, all of whose points are singular on $X$.

  Finally, a hypersurface in $\PP^3$ is Cohen--Macaulay, hence satisfies Serre's condition $S_2$. As its singular locus is finite, it satisfies $R_1$ as well. By Serre's criterion \cite[Chap.~II, Prop.~8.23(b)]{hartshorne1977}, $X$ is normal. 
\end{proof}

\begin{proposition}
\label{prop: NC_is_triangle}
  The NC-pair $(X,D)$ is a triangle completion.
\end{proposition}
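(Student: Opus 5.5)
We need to check that $(X,D)$ satisfies Definition~\ref{def:triangle-pair}: $X$ is a projective surface with isolated singularities, $D$ lies in the regular locus, $D$ is a reduced normal crossing divisor, its three components are smooth rational $(-1)$-curves, and $\Gamma(D)$ is a $3$-cycle. By Lemma~\ref{lm:cubic-normal}, $X$ is an irreducible normal projective surface with finite singular locus disjoint from $\{t=0\}$, so the first two conditions hold.

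Setting $t=0$ in \eqref{eq: general_equation_homogeneous} gives $xyz=0$. Hence the hyperplane section $H=X\cap\{t=0\}$ is, set-theoretically, the union of the three lines $L_x,L_y,L_z$, and each line is smooth and rational. As divisors, $H$ has degree $3$ and its support consists of three lines, so $H=L_x+L_y+L_z$ with multiplicity one each; thus $D$ is reduced.

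The lines meet pairwise in the three distinct points $(1:0:0:0)$, $(0:1:0:0)$, $(0:0:1:0)$, and no point lies on all three lines. So $\Gamma(D)$ is a $3$-cycle. To show these points are nodes of $D$, it suffices that in the smooth plane $\{t=0\}$ the curve $D$ is the triangle of lines $xyz=0$, whose singular points are ordinary double points. Since $X$ is smooth along $\{t=0\}$ and the plane section is reduced, the plane is not tangent to $X$ generically along the lines. The local intersection $L_x\cdot L_y$ at $(0:0:1:0)$ then equals $1$. I would verify this directly: in the chart $z=1$, the equation becomes $xy=t(x^2+y^2+1)+\dots$, so $t$ is a local coordinate function with $t\equiv xy\cdot(\text{unit})$ on $X$. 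Therefore $x,y$ are local coordinates on $X$ at that point, and $L_x=\{x=0\}$, $L_y=\{y=0\}$ meet transversally.

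For the self-intersections, $H\sim L_x+L_y+L_z$ is a hyperplane class, so $H\cdot L_x=1$. Then $L_x^2=1-L_x\cdot L_y-L_x\cdot L_z=1-2=-1$, and the same holds for $L_y$ and $L_z$. Alternatively, adjunction with $K_X=-H$ on the normal cubic (valid near $D$) gives $L^2=-2-K_X\cdot L=-1$. The main point requiring care is the transversality at the three vertices, which the local chart computation handles.
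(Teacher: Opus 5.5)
Your proof is correct and follows the same route as the paper: Lemma~\ref{lm:cubic-normal} puts $D$ in the regular locus, the three distinct pairwise intersection points give the $3$-cycle, and $1=L_x\cdot(L_x+L_y+L_z)=L_x^2+2$ gives the self-intersections. Your chart computation at $(0:0:1:0)$, where $xy=t\cdot u$ with $u$ a unit, shows that $x,y$ are local coordinates there; it is $x,y$ and not $t$ that serve as coordinates, so your phrase ``$t$ is a local coordinate function'' should be corrected. With that fix, the computation makes explicit the transversality of the branches at the nodes, which the paper uses without comment.
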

\begin{proof}
  By Lemma~\ref{lm:cubic-normal}, the surface $X$ has at most finitely many singular points, none of which lies on the boundary divisor $D=L_x+L_y+L_z$, which is a hyperplane section of $X$.
  The three lines meet pairwise in the three distinct points $(1:0:0:0)$, $(0:1:0:0)$, and $(0:0:1:0)$, and no point lies on all three; hence $D$ has normal crossings and its dual graph $\Gamma(D)$ is a $3$-cycle.
  The line $L_x$ has intersection index 1 with a hyperplane section, hence we have
  \[
  1= L_x\cdot(L_x+L_y+L_z)=L_x^2+2,
  \]
  and $L_x^2=-1$. Similarly, $L_y^2=L_z^2=-1$.
\end{proof}

\begin{notation}
Let us introduce the following involutions on $Y$:
\begin{align*}
\sigma_x: (x, y, z) \mapsto (yz - x - a, y, z), \\
\sigma_y: (x, y, z) \mapsto (x, xz - y - b, z), \\
\sigma_z: (x, y, z) \mapsto (x, y, xy - z - c).
\end{align*}
They are automorphisms of $Y$. 
Indeed, the equation \eqref{eq: general_equation} may be represented as $x(yz-x-a)=y^2+z^2+by+cz+d$, hence $\sigma_x$ belongs to $\Aut(Y)$. 
The same holds for $\sigma_y$ and $\sigma_z$.
Let us also denote by $G_\sigma$ the group generated by $\sigma_x$, $\sigma_y$, and $\sigma_z$. 
\end{notation}

Let us take a closer look at how these involutions act on $X$ as the elements of $\Bir(X, D)$.

\begin{lemma}
\label{lemma: sigma_action}
Let $\widetilde{X}$ be the blowup of $X$ at the point $p_z=L_x\cap L_y$, and $E$ be the exceptional curve. Then the involution $\sigma_z$ is regular on $\widetilde{X}$. It interchanges $L_z$ and $E$ and fixes $L_x$ and $L_y$.
\end{lemma}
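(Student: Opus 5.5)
We prove the lemma by viewing $\sigma_z$ as the deck involution of the double cover $X\dashrightarrow\PP^2$ given by projection from $p_z=(0:0:1:0)$. This projection is $(x:y:z:t)\mapsto(x:y:t)$. Write the homogeneous equation \eqref{eq: general_equation_homogeneous} as a quadratic in $z$:
\[
t z^2 - (xy - ct)z + \bigl(t(x^2+y^2) + t^2(ax+by) + t^3 d\bigr)=0 .
\]
Each line through $p_z$ meets $X$ in $p_z$ and two further points (with $z$ finite when $t\neq0$). The map $\sigma_z$ swaps these two roots, so in homogeneous form it is
\[
(x:y:z:t)\mapsto\bigl(xt : yt : xy - zt - ct^2 : t^2\bigr).
\]
Equivalently, by Vieta, $z' = \bigl(x^2+y^2+t(ax+by)+t^2 d\bigr)/z$ on the locus $t\neq 0$. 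Blowing up $p_z$ resolves the projection from $p_z$: the map $\widetilde X\to\PP^2$ becomes a morphism that is generically $2:1$, and $\sigma_z$ is its covering involution. Since $p_z$ is a smooth point of $X$ (Lemma~\ref{lm:cubic-normal}), $\widetilde X$ is normal and $E\cong\PP^1$ is a $(-1)$-curve, which matches the triangle picture.

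The key steps are as follows.

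\begin{enumerate}
\item Compute the indeterminacy of $\sigma_z$ on $X$. The homogeneous formula above vanishes identically only where $t=0$ and $xy=0$, which is $L_x\cup L_y$. Dividing by common factors on each chart, using the equation of $X$, shows that the only genuine indeterminacy point is $p_z$. For example, on $L_x$ near a point with $y\neq0$, rewrite via the second Vieta expression and check that it extends.
\item Show that $\sigma_z$ extends regularly over $E$. A cleaner alternative is to note that $\widetilde X\to\PP^2$ is finite of degree $2$ away from finitely many contracted curves, and use normality of $\widetilde X$: a birational involution of a normal surface commuting with a generically finite morphism to $\PP^2$ is regular wherever the morphism is finite. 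Then handle the possibly contracted curves directly; these are the lines through $p_z$ on $X$, namely $L_x$ and $L_y$.
\item Determine the action on the boundary curves. The points of $L_z=\{z=t=0\}$ map to the locus $t^2=0$ with $z\mapsto xy\neq0$, which under the proper limit on $\widetilde X$ is the direction at $p_z$, i.e.\ a point of $E$. Hence $\sigma_z(L_z)=E$, and since $\sigma_z$ is an involution, $\sigma_z(E)=L_z$. For $L_x$ and $L_y$, note that $\sigma_z$ preserves the projection fibers and $x$, $y$. Restrict the formula on the chart $t=1$ and take limits; a computation of the limit of the image of a curve approaching a general point of $L_x$ shows that it approaches $L_x$. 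Alternatively, argue that $\sigma_z$ preserves $D+E$ (it is an automorphism of $Y$) together with the intersection pattern, and $L_x,L_y$ are the only components meeting both $E$ and $L_z$.
\end{enumerate}

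The main obstacle is the local regularity check near $E\cap L_x$ and $E\cap L_y$, where both the numerator and the denominator of the formula vanish. I would first try the coordinate chart of the blowup $x=uy$, $t=wy$ (with $z=1$) and verify explicitly that the pulled-back map is given by polynomials without common zeros. If that gets messy, I would fall back on the finiteness and normality argument of step 2.
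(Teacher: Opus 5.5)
Your route is sound in outline but differs from the paper's. The paper works only from the explicit formula. It notes that $\sigma_z$ contracts $L_z$ to $p_z$. Near $p_z$ it writes $\sigma_z$ as $(x,y)\mapsto\frac{t}{xy-t-ct^2}(x,y)$, which preserves the ratio $x:y$, so a general point of $L_z$ goes to a point of $E$ off $\tilde L_x\cup\tilde L_y$. It then asserts, without examining the corner points, that one blowup makes $\sigma_z$ regular.

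Your double-cover picture is the global form of the same observation: $\sigma_z$ preserves $(x:y:t)$, and on $E$ the projection $\phi\colon\widetilde X\to\PP^2$ from $p_z$ records exactly $x:y$. But it buys more than the paper's argument:
\begin{itemize}
\item Since $\widetilde X$ is normal, $\phi^{-1}(U)$ is the normalization of $U=\PP^2\setminus\{(0:1:0),(1:0:0)\}$ in $\KK(X)$, so $\sigma_z$ is regular there for free.
\item $L_z\leftrightarrow E$ follows because both curves map isomorphically onto the line $\{t=0\}$, and a deck involution exchanges the two points of a general fibre.
\item $L_x$ and $L_y$ are preserved individually because $\phi(\tilde L_x)=(0:1:0)\neq(1:0:0)=\phi(\tilde L_y)$. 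Your argument via ``the only components meeting both $E$ and $L_z$'' shows only that the pair $\{\tilde L_x,\tilde L_y\}$ is preserved.
\end{itemize}
Note that ``fixes'' is meant setwise: your Vieta form $(xz:yz:x^2+y^2+t(ax+by)+t^2d:tz)$ gives $(0:y:z:0)\mapsto(0:z:y:0)$ on $L_x$.

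The real weak spot is your fallback. The corner points $E\cap\tilde L_x$ and $E\cap\tilde L_y$ lie on $\tilde L_x=\phi^{-1}(0:1:0)$ and $\tilde L_y=\phi^{-1}(1:0:0)$. So do $p_y$ and $p_x$, which are their images. These are exactly the places where $\phi$ is not finite, so step~2 says nothing there, and the chart computation is not optional.

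It does go through. In the chart $z=1$, $x=uy$, $t=wy$, the proper transform of $X$ gives $uy-w-cw^2y=wy^2h$ with $h=1+u^2+w(au+b)+w^2d$. After dividing by $wy^2$, the formula becomes $(u:1:yh:w)$.
\begin{itemize}
\item It is regular on the whole chart, which covers $E$ minus $E\cap\tilde L_y$.
\item It sends $E=\{y=w=0\}$ to $L_z$ via $u\mapsto(u:1:0:0)$.
\item Near $E\cap\tilde L_x$ it reads $(u,y)\mapsto(u,yh)$ in the local coordinates $(x,z)$ at $p_y$. This is a local isomorphism since $h(0)=1$, and by involutivity it settles $p_y$ as well.
\item The symmetric chart handles $\tilde L_y$.
\end{itemize}

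Alternatively, you can avoid charts. Since $\phi\circ\sigma_z=\phi$, any curve contracted by the lift of $\sigma_z$ must be $\tilde L_x$ or $\tilde L_y$. The lift is non-constant on both, by the formula above and its analogue $(x:0:z:0)\mapsto(z:0:x:0)$. So it contracts no curve, and by Zariski's main theorem its inverse, which is itself, has no indeterminacy point on the normal surface $\widetilde X$.

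Minor: the coefficient of $z$ in your quadratic should be $-(xy-ct^2)$, not $-(xy-ct)$.
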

\begin{proof}
We have
$$
\sigma_z:
(x:y:z:t)\mapsto (tx:ty:xy-tz-ct^2:t^2)=(x:y:xy/t-z-tc:t),
$$
hence the map fixes $L_x$ and $L_y$, while $L_z$ is sent to $p_z=(0:0:1:0)$. In the affine chart $\{z=1\}$ one can choose $x$ and $y$ to be the local coordinates at the origin $p_z$ which is to be blown up. In the local coordinates the involution reads as
$$
(x, y) \mapsto \frac{t}{xy-t-ct^2}(x, y).
$$
Any point of $L_z$ other than its intersections with $L_x$ and $L_y$ can be obtained as the limit of a curve $x(\tau)=\tau$, $y(\tau)=\alpha \tau$ for some $\alpha\in\KK$ as $\tau\to\infty$. The above shows that the image of this curve under $\sigma_z$ approaches $p_z$ along neither of the coordinate axes, therefore meeting $E$ outside of the points $E\cap L_x$ and $E\cap L_y$. Then a single blow up at $p_z$ is enough to make $\sigma$ a regular automorphism.
\end{proof}

Theorem~\ref{th:aut-action} provides a homomorphism $\mu\colon\Aut(Y)\to\Aut(T(Y))\cong\PGL_2(\ZZ)$, the isomorphism being that of Corollary~\ref{cr:aut-T-PGL}. Lemma~\ref{lemma: sigma_action} determines $\mu$ on the involutions $\sigma_x,\sigma_y,\sigma_z$.

\begin{lemma}\label{lm:sigma-tree-action}
Mark the inner components of $Y$ as in Notation~\ref{nt:markings}, so that the boundary components of the standard triangle completion are $L_x=(1,0)$, $L_y=(0,1)$, and $L_z=(1,1)$. Then
\begin{equation*}
\mu(\sigma_x)=C_x:=\begin{pmatrix}1&0\\2&-1\end{pmatrix},\qquad
\mu(\sigma_y)=C_y:=\begin{pmatrix}-1&2\\0&1\end{pmatrix},\qquad
\mu(\sigma_z)=C_z:=\begin{pmatrix}1&0\\0&-1\end{pmatrix}.
\end{equation*}
\end{lemma}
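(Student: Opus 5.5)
Since $\mu(g)$ lies in $\PGL(2,\ZZ)\cong\Aut(T(Y))$, the plan is to find out where each involution sends the standard triangle $(L_x,L_y,L_z)=((1,0),(0,1),(1,1))$ of $T(Y)$, and then read off the matrix.

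By Lemma~\ref{lm:markings}(ii) and the faithfulness in Corollary~\ref{cr:aut-T-PGL}, an element of $\PGL(2,\ZZ)$ is determined by its action on the three vertices of one triangle. So it suffices to compute $\mu(\sigma)$ on $(1,0)$, $(0,1)$ and $(1,1)$. Whichever candidate matrix we get then only has to be checked against those three values.

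For $\sigma_z$ we use Lemma~\ref{lemma: sigma_action}. The involution is regular on the blowup $\widetilde X$ at $p_z=L_x\cap L_y$. It fixes $L_x$ and $L_y$ as curves and swaps $L_z$ with $E$. By Notation~\ref{nt:markings}\ref{nt:markings:node}, the exceptional curve $E$ of the inner blowup at the node between $(1,0)$ and $(0,1)$ is the other neighbour of the edge $((1,0),(0,1))$. By Lemma~\ref{lm:markings}(i) this neighbour is $(1,-1)$, since $(1,1)$ is already $L_z$. Hence $\mu(\sigma_z)$ fixes $(1,0)$ and $(0,1)$ in $\PP(\ZZ^2)$ and sends $(1,1)\mapsto(1,-1)$. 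The matrix $C_z=\mathrm{diag}(1,-1)$ does exactly this.

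For $\sigma_x$ and $\sigma_y$ we do not redo the geometry. Permuting the coordinates $x,y,z$ (together with $a,b,c$) turns the statement of Lemma~\ref{lemma: sigma_action} into the analogous one. For instance, $\sigma_x$ is regular on the blowup at $p_x=L_y\cap L_z$, fixes $L_y$ and $L_z$, and swaps $L_x$ with the exceptional curve $E_x$ there. Since $L_y=(0,1)$ and $L_z=(1,1)$, the third vertex adjacent to the edge $((0,1),(1,1))$ is $(1,1)+(0,1)=(1,2)$, as opposed to the difference $(1,0)$. So $\mu(\sigma_x)$ must fix $(0,1)$ and $(1,1)$ and send $(1,0)\mapsto(1,2)$. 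Checking $C_x$: its columns are $(1,2)$ and $(0,-1)$, so $(1,0)\mapsto(1,2)$, $(0,1)\mapsto\pm(0,1)$, and $(1,1)\mapsto(1,1)$, as required. Similarly, $\sigma_y$ fixes $(1,0)$ and $(1,1)$ and sends $(0,1)$ to the third vertex over the edge $((1,0),(1,1))$, which is $(2,1)$. The matrix $C_y$ sends $(1,0)\mapsto\pm(1,0)$, $(0,1)\mapsto(2,1)$ and $(1,1)\mapsto(1,1)$.

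The main obstacle is justifying that ``the exceptional curve of the blowup at a node is the vertex $u\pm v$ other than the existing one.'' This can be done in either of two ways. First, Lemma~\ref{lm:complex-T-basic}\ref{lm:complex-T-basic:two-triangles} identifies the two triangles over an edge with the two inner blowups of the corresponding $(0,0)$-completion. Alternatively, one can pass to a marking based at the edge via Lemma~\ref{lm:markings}(iii). We also need that the statements about curves are compatible with the action on vertices. This holds because $\mu$ is defined through the action on inner components, viewed as valuations (Remark~\ref{rm:inner-identification}), and a regular automorphism of $\widetilde X$ maps each boundary curve to its image curve.
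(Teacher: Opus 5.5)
Your proposal is correct and follows essentially the paper's own proof. You identify $E_z=(1,-1)$, $E_x=(1,2)$ and $E_y=(2,1)$ as the vertices opposite the existing ones across the three boundary edges, and then apply Lemma~\ref{lemma: sigma_action} together with its coordinate-permuted analogues. The only cosmetic difference is that you check the given matrices on the base triangle $(1,0),(0,1),(1,1)$ and invoke uniqueness through the faithfulness of the $\PGL(2,\ZZ)$-action, whereas the paper solves for the matrix entries from the same constraints.
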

\begin{proof}
By Remark~\ref{rm:farey}, the vertices of $T(Y)$ adjacent to an edge $\{u,v\}$ are $u+v$ and $u-v$.
The edge $\{L_x,L_y\}=\{(1,0),(0,1)\}$ carries the two triangles with third vertices $L_z=(1,1)$ and $(1,-1)$; since blowing up the node $p_z=L_x\cap L_y$ produces the inner component opposite to $L_z$, the exceptional curve of Lemma~\ref{lemma: sigma_action} is $E=E_z=(1,-1)$.
Applying the same rule to the two other nodes, the exceptional curve over $p_x=L_y\cap L_z$ is $E_x=(1,2)$ and the one over $p_y=L_x\cap L_z$ is $E_y=(2,1)$.

Lemma~\ref{lemma: sigma_action} says that $\sigma_z$ fixes $L_x$ and $L_y$ and interchanges $L_z$ and $E_z$.
An element of $\PGL_2(\ZZ)$ fixing $(1,0)$ and $(0,1)$ is diagonal, and interchanging $(1,1)$ with $(1,-1)$ forces it to be $C_z=\operatorname{diag}(1,-1)$.
By symmetry, $\sigma_x$ fixes $L_y=(0,1)$ and $L_z=(1,1)$ and interchanges $L_x=(1,0)$ with $E_x=(1,2)$: writing $\left(\begin{smallmatrix}p&q\\r&s\end{smallmatrix}\right)$ for a representative, fixing $(0,1)$ gives $q=0$, sending $(1,0)$ to $(1,2)$ gives $r=2p$, and fixing $(1,1)$ then gives $s=-p$, so the class is $C_x$.
The same computation for $\sigma_y$, which fixes $L_x$ and $L_z$ and interchanges $L_y=(0,1)$ with $E_y=(2,1)$, gives $C_y$.
Each of $C_x,C_y,C_z$ is an involution of determinant $-1$, as it must be.
\end{proof}

\begin{proposition}[{\cite[Theorems 1 and 2]{ElHu74}}]
\label{prop: generators_El-Huti}
\begin{enumerate}[(i)]
  \item $G_\sigma = \langle\sigma_x\rangle * \langle\sigma_y\rangle * \langle\sigma_z\rangle$.\label{prop: generators_El-Huti:free}
  \item The group $\Aut(Y)$ is generated by $G_\sigma$ and the linear automorphisms of $Y$.\label{prop: generators_El-Huti:generation}
\end{enumerate}
\end{proposition}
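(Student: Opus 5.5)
We will derive both assertions from the action of $\Aut(Y)$ on $T(Y)$ given by Theorem~\ref{th:aut-action} together with Lemma~\ref{lm:sigma-tree-action}, rather than reproducing El-Huti's original argument.

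\emph{Step 1: freeness.} By Lemma~\ref{lm:sigma-tree-action}, $\mu(\sigma_x),\mu(\sigma_y),\mu(\sigma_z)$ are the matrices $C_x,C_y,C_z$. In the Farey picture of Remark~\ref{rm:farey}, these are the reflections of $T(Y)$ in the three edges of the standard triangle $\Delta_0=(L_x,L_y,L_z)$. Indeed, $\sigma_z$ fixes the edge $\{L_x,L_y\}$ and swaps the two triangles containing it, and similarly for the other two. Consider the $3$-regular adjacency tree of Lemma~\ref{lm:complex-T-tree}, and a reduced word $w=\sigma_{i_1}\cdots\sigma_{i_n}$ with $i_k\neq i_{k+1}$. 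We show by induction on $n$ that $w(\Delta_0)$ lies at distance exactly $n$ from $\Delta_0$ in the tree. The key point is that $w$ sends the three neighbours of $\Delta_0$ to the three neighbours of $w(\Delta_0)$. The neighbour reached by the last letter's edge goes back towards $\Delta_0$, while applying a different reflection moves further away. This is the standard ping-pong argument for a tree with edge colouring by $\{x,y,z\}$. Concretely, the tree is the Cayley graph of $\ZZ/2*\ZZ/2*\ZZ/2$, and the $\sigma$'s act as the generators. It follows that $w\neq\id$ for $n\geq1$, so $G_\sigma$ is the free product. The only care needed is to check that $\sigma_z$ acts on the edge-coloured tree compatibly: conjugating, $w\sigma_i w^{-1}$ is the reflection in the edge of $w(\Delta_0)$ coloured $i$. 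This holds because colours are preserved by construction, since each triangle's edges are labelled by which vertex is opposite, i.e.\ by the involution fixing it.

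\emph{Step 2: generation.} By Step 1, $G_\sigma$ acts simply transitively on the triangles of $T(Y)$. Given $g\in\Aut(Y)$, choose $h\in G_\sigma$ with $h g(\Delta_0)=\Delta_0$. Then $hg$ preserves the triangle completion $(X,D)$ up to isomorphism over $Y$, by Lemma~\ref{lm:completion-unique}, so $hg\in\Aut(X,D)$. An automorphism of the cubic $X\subset\PP^3$ preserving the anticanonical class is induced by a linear map of $\PP^3$, by Proposition~\ref{pr:triangle-surface-anticanonical}. Since it preserves the plane $\{t=0\}$, it is an affine linear automorphism of $\AA^3$ preserving $Y$. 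It remains to see that it is linear, i.e.\ fixes the origin. For this we will use the normal form~\eqref{eq: general_equation}. The automorphism permutes the lines $x,y,z$ at infinity, so it has the form $x\mapsto \lambda x+\alpha$ up to permutation. Comparing the coefficients of the mixed quadratic terms in the equation forces $\alpha=\beta=\gamma=0$. Therefore $g\in G_\sigma\cdot\mathrm{Lin}(Y)$.

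The main obstacle is Step 1's bookkeeping: verifying that $\mu(G_\sigma)$ acts on the tree as the reflection group. Equivalently, one must show that $\sigma_x,\sigma_y,\sigma_z$ act freely enough, without relying on kernel issues; here freeness in $\PGL(2,\ZZ)$ already implies freeness in $\Aut(Y)$. I would first verify directly that $C_x,C_y,C_z$ generate the well-known free product of three $\ZZ/2$ inside $\PGL(2,\ZZ)$, namely the congruence-type reflection group of the Farey triangle.
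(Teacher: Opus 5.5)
Your proof is correct, but it takes a different route from the paper. The paper proves nothing here: it imports both assertions from El-Huti's Theorems~1 and~2 and only remarks that his setting matches. You derive them from the paper's own machinery instead.

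For (i), you do not actually need the edge colouring. Write $w_k=\sigma_{i_1}\cdots\sigma_{i_k}$ for the prefixes of a reduced word. Then $w_{k-1}\Delta_0=w_k\sigma_{i_k}\Delta_0\neq w_k\sigma_{i_{k+1}}\Delta_0=w_{k+1}\Delta_0$, simply because $\sigma_{i_k}\Delta_0$ and $\sigma_{i_{k+1}}\Delta_0$ are distinct neighbours of $\Delta_0$. So the path is non-backtracking in the tree of Lemma~\ref{lm:complex-T-tree}. If you keep the colouring, note that ``label an edge by its opposite vertex'' is only globally well defined after reducing markings mod~$2$: this uses $u+v\equiv u-v$ and $C_x\equiv C_y\equiv C_z\equiv I$. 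The same induction shows that the neighbours of $w\Delta_0$ are the triangles $w\sigma_i\Delta_0$. That gives the transitivity you invoke in Step~2, and you should state it explicitly.

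For (ii), simple transitivity together with Lemma~\ref{lm:completion-unique} gives $\Aut(Y)=G_\sigma\cdot\Aut(X,D)$. Elements of $\Aut(X,D)$ are linear because $X$ is embedded by the complete system $|D|$: $D$ is a hyperplane section of a cubic, so $h^0(\O_X(1))=4$. This, rather than Proposition~\ref{pr:triangle-surface-anticanonical} itself, is the relevant fact. Your last step compares the $xy,yz,zx$ coefficients in $F\circ A=\kappa F$, which uses the irreducibility of $F$. That is exactly the paper's remark after Lemma~\ref{lemma: linear_automorphisms}.

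What your route buys:
\begin{enumerate}
\item The section becomes self-contained modulo \cite{PeZa26}.
\item You get Corollary~\ref{cr:sigma-faithful} and $G_\sigma\cap\mathrm{Lin}(Y)=1$ at no extra cost, since the stabilizer of $\Delta_0$ in $G_\sigma$ is trivial. The paper proves these separately, via ping-pong on $\RR\PP^1$ and a mod-$2$ argument.
\item It shows that the paper's own proof of Corollary~\ref{cr:sigma-faithful} already contains (i). Once $\langle C_x,C_y,C_z\rangle$ is known to be a free product, the surjection $\ZZ/2*\ZZ/2*\ZZ/2\to G_\sigma$ must be injective, with no appeal to El-Huti.
\end{enumerate}
The citation, in turn, costs the paper nothing and attributes the generation theorem to its original source.
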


El-Huti works with a projective cubic surface together with a triangle of lines contained in its regular locus, allowing the surface itself to be singular; this coincides with our setting. 

\begin{corollary}\label{cr:sigma-faithful}
The restriction of $\mu$ to $G_\sigma$ is injective. Equivalently, $G_\sigma\cap\Aut^\natural(X,D)=1$, and no nontrivial element of $G_\sigma$ acts trivially on $T(Y)$.
\end{corollary}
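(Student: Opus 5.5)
Since $\mu$ is a homomorphism and $G_\sigma$ is the free product $\langle\sigma_x\rangle*\langle\sigma_y\rangle*\langle\sigma_z\rangle$ by Proposition~\ref{prop: generators_El-Huti}\ref{prop: generators_El-Huti:free}, injectivity of $\mu|_{G_\sigma}$ amounts to showing that the images $C_x,C_y,C_z$ of Lemma~\ref{lm:sigma-tree-action} generate a free product of three groups of order two in $\PGL(2,\ZZ)$. That is, every nonempty reduced word $C_{i_1}C_{i_2}\cdots C_{i_n}$ with $i_k\neq i_{k+1}$ must be nontrivial. The two reformulations in the statement then follow at once: by Theorem~\ref{th:aut-action} the kernel of $\mu$ is $\Aut^\natural(X,D)$, so $\ker(\mu|_{G_\sigma})=G_\sigma\cap\Aut^\natural(X,D)$; and an element acts trivially on $T(Y)$ exactly when it lies in $\ker\mu$.

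To prove that the reduced words are nontrivial, I would use a ping-pong argument on the adjacency tree of triangles of $T(Y)$, which by Lemma~\ref{lm:complex-T-tree} is a $3$-regular tree. Let $\Delta_0=(L_x,L_y,L_z)$ be the standard triangle. Its three edges are $e_x=\{L_y,L_z\}$, $e_y=\{L_x,L_z\}$ and $e_z=\{L_x,L_y\}$. By Lemma~\ref{lm:sigma-tree-action} and its proof, $C_z$ fixes $L_x$ and $L_y$ and swaps $L_z$ with $E_z$. Hence $C_z$ is the reflection of $T(Y)$ across the edge $e_z$, and it interchanges $\Delta_0$ with the neighbouring triangle $\Delta_z=(L_x,L_y,E_z)$. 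In the same way, $C_x$ and $C_y$ are the reflections across $e_x$ and $e_y$.

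Removing the edge of the adjacency tree that joins $\Delta_0$ and $\Delta_i$ splits the tree into two parts. Let $B_i$ be the branch containing $\Delta_i$, for $i=x,y,z$. The three branches are pairwise disjoint and none of them contains $\Delta_0$. The reflection $C_i$ fixes $e_i$ and swaps its two sides, so it maps the complement of $B_i$ onto $B_i$; in particular it maps $\Delta_0\cup B_j\cup B_k$ into $B_i$ for $\{i,j,k\}=\{x,y,z\}$.

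Now take a reduced word $w=C_{i_1}\cdots C_{i_n}$ and apply it to $\Delta_0$, letting the rightmost letter act first. By induction on the length, the image lies in $B_{i_1}$: at each step the letter being applied differs from the index of the branch the current triangle lies in (or the triangle is $\Delta_0$ itself), so the triangle is sent into the branch of that letter. Hence $w(\Delta_0)\neq\Delta_0$ and $w\neq\id$. The main point requiring care is to justify from Lemma~\ref{lm:sigma-tree-action} that each $C_i$ genuinely acts as the reflection across $e_i$, swapping the two sides of the tree. If this is awkward, an alternative is to check directly that $C_xC_y$, $C_yC_z$ and $C_xC_z$ are parabolic of infinite order (for instance, $C_z C_x$ has trace $\pm2$ and is not $\pm E$) and then invoke the standard fact that $\PGL(2,\ZZ)$ acts on the Farey tessellation of Remark~\ref{rm:farey}. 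I expect, however, that the ping-pong argument is the cleaner route.
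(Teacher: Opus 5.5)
Your proof is correct. It follows the paper's overall plan: reduce to showing that $C_x,C_y,C_z$ generate a free product, then apply ping-pong. The difference is where the ping-pong is played. The paper uses the three arcs of $\RR\PP^1$ cut out by the eigenlines of $C_x,C_y,C_z$, which are exactly the vertices $(1,0),(0,1),(1,1)$ of $\Delta_0$. You use the three branches of the adjacency tree at $\Delta_0$.

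Via the Farey identification of Remark~\ref{rm:farey}, these are dual pictures. Your branch $B_i$ consists of the triangles on the far side of $e_i$. Their vertices lie in the closed arc between the endpoints of $e_i$ that avoids the third vertex of $\Delta_0$.

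Your version has some advantages:
\begin{itemize}
\item It uses only the geometric input of Lemma~\ref{lemma: sigma_action}, namely that each $\sigma_i$ interchanges $\Delta_0$ and $\Delta_i$, together with Lemma~\ref{lm:complex-T-tree}.
\item It needs no eigenvector computation.
\item It verifies every ping-pong inclusion explicitly, whereas the paper's one-line appeal to the ping-pong lemma leaves these to the reader.
\end{itemize}

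The step you flagged as delicate is in fact immediate. A tree automorphism that interchanges the endpoints of an edge maps that edge to itself. Hence it interchanges the two components of the tree with that edge removed. So $C_i(\Delta_0)=\Delta_i$ alone already gives $C_i(\{\Delta_0\}\cup B_j\cup B_k)\subseteq B_i$.

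Your proposed fallback would not suffice as stated. Infinite order of the pairwise products $C_iC_j$ does not by itself exclude longer relations among $C_x,C_y,C_z$. To conclude freeness you would need an additional argument, such as a fundamental-domain argument. Fortunately, the main argument does not need the fallback.
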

\begin{proof}
By Proposition~\ref{prop: generators_El-Huti}\ref{prop: generators_El-Huti:free} the group $G_\sigma$ is the free product $\langle\sigma_x\rangle*\langle\sigma_y\rangle*\langle\sigma_z\rangle$, and by Lemma~\ref{lm:sigma-tree-action} its image under $\mu$ is generated by $C_x,C_y,C_z$. These three involutions generate their free product as well, by the ping-pong lemma \cite[Ch.~II.B]{Harpe00} applied to the three arcs of $\RR\PP^1$ cut out by their eigenlines. So $\mu$ carries a free product of three copies of $\ZZ_2$ onto another, sending free generators to free generators; hence it is injective on $G_\sigma$. The second formulation follows since $\ker\mu=\Aut^\natural(X,D)$ by Theorem~\ref{th:aut-action}.
\end{proof}

\begin{definition}\label{def:monomial}
We call a linear automorphism of $\AA^3$ \emph{monomial} if it permutes the coordinates $x,y,z$ up to nonzero scalar factors.
\end{definition}
\begin{lemma}
\label{lemma: linear_automorphisms}
Every linear automorphism of $Y$ is monomial in the sense of Definition~\ref{def:monomial}, with all three scalar factors equal to $\pm1$ and of product $1$.
\end{lemma}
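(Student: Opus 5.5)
Let $g$ be a linear automorphism of $Y$, i.e., the restriction of an affine map $g(w)=Aw+\beta$ of $\AA^3$ preserving $Y$, with $w=(x,y,z)^T$ and $\beta\in\KK^3$. The plan is to pass to the projective closure and read off the structure of $A$ from the boundary triangle, and then to compare coefficients.

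\emph{Step 1 (permutation of the lines).} The affine map $g$ extends to a projective transformation of $\PP^3$ that preserves the plane $\{t=0\}$. Since it preserves $Y$, it also preserves the closure $X$. Hence it preserves $D=X\cap\{t=0\}=L_x+L_y+L_z$ and permutes the three lines. On $\{t=0\}$ the map acts by the linear part $A$, and the lines are $\{x=0\},\{y=0\},\{z=0\}$ in this plane. So $A$ permutes these three coordinate lines, and therefore it also permutes their pairwise intersection points $(1:0:0),(0:1:0),(0:0:1)$. Consequently $A$ sends each coordinate vector to a multiple of a coordinate vector, i.e., $A=P\Lambda$ with $P$ a permutation matrix and $\Lambda=\operatorname{diag}(\lambda_1,\lambda_2,\lambda_3)$ invertible. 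This shows that the linear part is monomial.

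\emph{Step 2 (the translation vanishes and the scalars are signs).} Write $g^*x=\lambda_1 x_{s(1)}+\beta_1$, and similarly for $y$ and $z$. Since the defining polynomial $F=xyz-x^2-y^2-z^2-ax-by-cz-d$ is irreducible (Lemma~\ref{lm:cubic-normal}), we get $g^*F=\kappa F$ for some constant $\kappa\neq0$. We then compare the homogeneous parts.
\begin{itemize}
\item The cubic part $xyz$ is symmetric, and it pulls back to $\lambda_1\lambda_2\lambda_3\,xyz$. Hence $\kappa=\lambda_1\lambda_2\lambda_3$.
\item The degree-two part of $g^*F$ is $\lambda_1\lambda_2\beta_3\,(\cdot)+\dots-\sum\lambda_i^2x_{s(i)}^2$. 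Matching the mixed monomials $x_ix_j$, which are absent from $F$, forces each $\beta_k\lambda_i\lambda_j=0$, so $\beta=0$.
\item Matching the squares gives $\lambda_i^2=\kappa$ for all $i$. So all $\lambda_i^2$ are equal, and $\lambda_1\lambda_2\lambda_3=\kappa=\lambda_i^2$, which gives $\lambda_1\lambda_2\lambda_3=\lambda_1^2$, hence $\lambda_2\lambda_3=\lambda_1$, and similarly for the other indices.
\end{itemize}
Multiplying these three relations gives $(\lambda_1\lambda_2\lambda_3)^2=\lambda_1\lambda_2\lambda_3$, so $\kappa=1$. Then $\lambda_i^2=1$, so each $\lambda_i=\pm1$, and their product equals $1$.

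\emph{Main obstacle.} The only delicate point is the assertion $g^*F=\kappa F$, which requires the reducedness and irreducibility of the equation. Irreducibility is supplied by Lemma~\ref{lm:cubic-normal}, and reducedness then follows because $g^*F$ and $F$ define the same irreducible hypersurface and have the same degree. Step 1 could alternatively be skipped: one can read the monomial structure directly from the cubic term, since $g^*(xyz)$ must have cubic part a scalar multiple of $xyz$, and by unique factorization the three linear forms in $A w$ must then be proportional to $x,y,z$ in some order. I would use this algebraic argument as the primary route, with the geometric argument as the motivation.
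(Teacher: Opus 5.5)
Your proof is correct and follows the paper's route: the cubic term forces $\varphi^*(xyz)\in\KK\cdot xyz$, hence a monomial linear part, and comparing the square terms gives $\lambda_1\lambda_2\lambda_3=\lambda_i^2$, whence $\kappa=1$ and $\lambda_i=\pm1$. Your additions are all sound: you justify $g^*F=\kappa F$ via irreducibility, give a geometric Step~1 through the boundary triangle, and kill the translation part using the absent mixed monomials, which is exactly the observation the paper makes in the note right after the lemma.
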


\begin{proof}
Take a linear transformation $\varphi$ lying in $\Aut(Y)$. Then $\varphi$ then must stabilize the one-dimensional space $\KK\cdot xyz$, meaning it can only permute the variables possibly multiplying them by constants.

Denote, respectively, by $\lambda_1, \lambda_2$, and $\lambda_3$ the nonzero constant by which $\varphi$ multiplies each variable after the underlying permutation. 
Then we should have $\lambda_1\lambda_2\lambda_3=\lambda_1^2=\lambda_2^2=\lambda_3^2$, which leads to $\lambda_1\lambda_2\lambda_3 = 1$, and $\lambda_1, \lambda_2, \lambda_3=\pm 1$.
\end{proof}

Note that an affine automorphism of $Y$ is automatically linear: its linear part is monomial by the argument above, and a nonzero translation would reintroduce mixed quadratic monomials $xy$, $yz$, $zx$, which nothing in the equation can cancel. Equivalently, the shift of Lemma~\ref{lemma: after_affine_transform} that kills the mixed terms is unique.

\begin{remark}
\label{remark: structure_of_PGL}
The homomorphism $\mu$ identifies $G_\sigma$ with the kernel of the reduction map $\PGL_2(\ZZ)\to\PGL_2(\FF_2)$: it is injective on $G_\sigma$ by Corollary~\ref{cr:sigma-faithful}, its image lies in the kernel since $C_x,C_y,C_z\equiv I\pmod 2$, and conversely these three involutions generate the kernel. 

Since $\PGL_2(\FF_2)$ is the group of permutations of the three points of $\PP(\FF_2^2)$, hence isomorphic to $S_3$, and the coordinate permutations split the quotient,
\begin{equation*}
\PGL_2(\ZZ)\cong G_\sigma\rtimes S_3 .
\end{equation*}
Furthermore, put $H := \langle \sigma_z\sigma_x,\ \sigma_z\sigma_y\rangle$, an index-$2$ subgroup of the free product $G_\sigma=\langle\sigma_x\rangle*\langle\sigma_y\rangle*\langle\sigma_z\rangle$ and hence free on the two generators. As $\sigma_z$ normalizes $H$ and together they generate $G_\sigma$, we see that $G_\sigma \cong \ZZ_2 \ltimes H$. Under $\mu$ the subgroup $H$ corresponds to the classes of determinant $1$ in the kernel: these form the subgroup $\overline\Gamma(2)\subset\PSL_2(\ZZ)$, generated freely by $C_zC_x$ and $C_zC_y$ --- the squares of the elementary transformation matrices of Remark~\ref{rm:elementary-moves}\ref{rm:elementary-moves:elementary} for the standard basis --- while $C_z$ represents the remaining coset.
\end{remark}

Let $K_4$ be the Klein four-group consisting of the maps $(x, y, z)\mapsto (\varepsilon_x x, \varepsilon_y y, \varepsilon_z z)$ with $\varepsilon_x\varepsilon_y\varepsilon_z=1$. Then there is an action of the group $S_4= K_4 \rtimes S_3$ on $\AA^3$.

\begin{theorem}
\label{thm: automorphisms_of_cubic_surfaces}
Define the maps
\begin{align*}
\tau &: (x, y, z)\mapsto (y, x, z), \\
\theta &: (x, y, z)\mapsto (-x, -y, z).
\end{align*}
Then any cubic surface $Y$ given by the equation~\eqref{eq: general_equation} can be brought to the form with $(a, b, c)$ from Table~\ref{table_markov} by some element $g\in S_4$ and $\Aut(Y) = G_\sigma \rtimes \Gamma$, where the group $\Gamma$ is a subgroup of $S_4 = K_4 \rtimes S_3$ conjugated to $\Gamma_0$ from the respective row with the element~$g$. All possible $\Gamma_0$ are listed in Table~\ref{table_markov}.

\renewcommand{\arraystretch}{1.3}
\begin{table}[h]
\caption{}\label{table_markov}
\centering
\begin{tabular}{|c|c|}
\hline
Conditions & $\Gamma_0$ \\
\hline
$a = b = c = 0$ & $K_4 \rtimes S_3$ \\
\hline
$a = b = c\neq 0$ & $S_3$ \\
\hline
$a = b = 0\neq c$ & $\langle\theta\rangle_2\times\langle\tau\rangle_2$ \\
\hline
$a = b \neq 0, \pm c\neq a$ & $\langle\tau\rangle_2$ \\
\hline
$a^2, b^2, c^2$ all distinct & $1$ \\

\hline
\end{tabular}
\renewcommand{\arraystretch}{1}
\end{table}


\end{theorem}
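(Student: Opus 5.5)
The proof splits into three steps: identify the linear automorphisms, show that $G_\sigma$ is normal with trivial intersection, and classify the possible $(a,b,c)$.

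\textbf{Linear automorphisms.} By Lemma~\ref{lemma: linear_automorphisms}, every linear automorphism of $Y$ lies in the group $S_4=K_4\rtimes S_3$ of signed permutations with sign product $1$. Such an element $h$ preserves $xyz$ and $x^2+y^2+z^2$. So $h$ preserves $Y$ exactly when it preserves the linear form $ax+by+cz$ up to the constant term. Equivalently, $h$ must fix the vector $(a,b,c)$ under the induced action of $S_4$ on coefficient vectors, namely permutation followed by sign changes of product $1$. Hence $\mathrm{Lin}(Y)$ is the stabilizer $\Gamma$ of $(a,b,c)$ in $S_4$. If $g\in S_4$ changes $(a,b,c)$ to $g(a,b,c)$, the stabilizer changes to the conjugate $g\Gamma g^{-1}$.

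\textbf{Semidirect product structure.} By Proposition~\ref{prop: generators_El-Huti}\ref{prop: generators_El-Huti:generation}, $\Aut(Y)=\langle G_\sigma,\Gamma\rangle$. Normality of $G_\sigma$ follows from a direct computation: a permutation conjugates $\sigma_x$ to the corresponding $\sigma_{x'}$, since the equation is symmetric once $(a,b,c)$ is permuted accordingly. A sign change with $\varepsilon_x\varepsilon_y\varepsilon_z=1$ satisfies $\varepsilon_x=\varepsilon_y\varepsilon_z$, so it conjugates $\sigma_x$ into $(x,y,z)\mapsto(yz-x-\varepsilon_x a,y,z)$. This equals $\sigma_x$ on $Y$ because $h$ fixes $a$ up to the stabilizer condition. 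Hence $\Gamma$ normalizes $G_\sigma$.

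For $G_\sigma\cap\Gamma=1$, use $\mu$. Let $h\in G_\sigma\cap\Gamma$. Then $\mu(h)$ lies in the kernel of reduction modulo $2$ by Remark~\ref{remark: structure_of_PGL}. On the other hand, a linear $h$ extends to $\Aut(X,D)$, so $\mu(h)$ permutes the triangle $\{L_x,L_y,L_z\}$. An element of $\PGL_2(\ZZ)$ that stabilizes the triangle $\{(1,0),(0,1),(1,1)\}$ and is trivial modulo $2$ must fix each vertex, hence is trivial. Thus $h\in\ker\mu\cap G_\sigma$, and $h=1$ by Corollary~\ref{cr:sigma-faithful}.

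\textbf{Classification of stabilizers.} It remains to classify the stabilizers of vectors $(a,b,c)\in\KK^3$ under signed permutations with sign product $1$. First use $S_4$ to normalize the vector.

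\begin{itemize}
\item If $a=b=c=0$, the stabilizer is all of $S_4$.
\item If all squares coincide and are nonzero, the signs can be adjusted to $a=b=c$. The stabilizer is then the permutations composed with a sign change fixing the vector, and only the trivial sign change works, giving $S_3$.
\item If exactly two coordinates vanish, normalize to $a=b=0\neq c$. The stabilizer is $\langle\theta,\tau\rangle$; one also has to check that $\tau\theta$-type elements give nothing more.
\item If exactly two squares coincide, normalize to $a=b$. This covers the cases $a=b\neq0$ with $c\neq\pm a$, and also $a=b=0\neq c$. Note that $a^2=b^2\neq c^2$ with one entry zero forces $a=b=0$. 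The stabilizer is $\langle\tau\rangle$.
\item If all squares are distinct, no nontrivial permutation can fix the vector, and a nontrivial sign change negates at least two entries, one of them nonzero, so the stabilizer is trivial.
\end{itemize}

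The main obstacle is this case analysis: one must make sure the table's cases are exhaustive after normalization, and compute the stabilizer carefully in each case, especially when some coordinates vanish.
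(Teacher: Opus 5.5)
For the group-theoretic part you follow the paper. You use El-Huti's generation of $\Aut(Y)$ by $G_\sigma$ and $\mathrm{Lin}(Y)$, and you identify $\mathrm{Lin}(Y)$ with the stabilizer of $(a,b,c)$ among signed permutations with sign product $1$. You prove normality by conjugating the Vieta involutions, and $G_\sigma\cap\mathrm{Lin}(Y)=1$ via $\mu$, reduction modulo $2$ and Corollary~\ref{cr:sigma-faithful}.

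The genuine difference is the stabilizer classification. The paper transports the action on linear forms, via $\ell\mapsto(a+b+c,-a-b+c,a-b-c,-a+b-c)$, to the standard representation of $S_4$ on $W\subset\KK^4$, and reads stabilizers off the partition of equal coordinates. You instead split by the invariant multiset $\{a^2,b^2,c^2\}$. The paper's route shows at once why there are exactly five cases. However, it gives the stabilizers only up to abstract isomorphism: the partition $\{2,2\}$ yields the non-normal Klein group $\langle\theta\rangle\times\langle\tau\rangle$, not the normal $K_4$. It also needs a translation back to conditions on $a,b,c$. Your route is elementary and produces the explicit subgroups of Table~\ref{table_markov} directly.

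Your fourth bullet has to be repaired, though.
\begin{itemize}
\item The remark that $a^2=b^2\neq c^2$ with one entry zero forces $a=b=0$ is false. The vector $(1,1,0)$ is a counterexample, and it lies in the fourth row, since $\pm c\neq a$ there.
\item This bullet cannot also cover $a=b=0\neq c$ with stabilizer $\langle\tau\rangle$. By your own third bullet, that stabilizer is $\langle\theta,\tau\rangle$, of order $4$.
\end{itemize}
The correct split of the case ``exactly two squares coincide'' is by whether the repeated square is zero (third row) or nonzero (fourth row, with $c=0$ allowed). In the latter case, normalize to $a=b\neq0$, $c^2\neq a^2$. A stabilizing element must fix the third slot, the only one with square $c^2$. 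It must carry sign $+$ on the first two slots because $a\neq0$. The product condition then forces sign $+$ on the third slot, so the stabilizer is $\{1,\tau\}$.

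A smaller point concerns normality. You handle permutations and sign changes separately, but an element of $\mathrm{Lin}(Y)$ need not have either factor in $\mathrm{Lin}(Y)$; for example, $(x,y,z)\mapsto(-y,-x,z)$ when $(a,b,c)=(1,-1,0)$. There are two easy fixes. You can compute the conjugate of $\sigma_x$ by the composite element directly. Alternatively, note that $\sigma_x$ is the deck involution of the double cover $(y,z)\colon Y\to\AA^2$. A signed permutation preserving $Y$ carries this projection to another coordinate projection, hence conjugates $\sigma_x$ to the corresponding $\sigma_{x'}$.
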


\begin{proof}
By Proposition~\ref{prop: generators_El-Huti}\ref{prop: generators_El-Huti:generation} and Lemma~\ref{lemma: linear_automorphisms}, the group $\Aut(Y)$ is generated by $G_\sigma$ and by the group $\mathrm{Lin}(Y)$ of linear automorphisms of $Y$, and the latter is contained in the group $\Gamma$ of signed coordinate permutations: those maps that permute the coordinates $x,y,z$ by some $\pi\in S_3$ and then multiply them by signs whose product equals $1$. Such a map is determined by the pair $(\varepsilon,\pi)\in K_4\rtimes S_3\cong\Gamma $, the normal subgroup $K_4$ being the one of the theorem.
Every element of $\Gamma$ preserves $xyz$, $x^2+y^2+z^2$ and $d$. So,
\begin{equation*}
\mathrm{Lin}(Y)=\operatorname{Stab}_\Gamma(\ell),\qquad \ell=ax+by+cz .
\end{equation*}
Note that $\Gamma\cong S_4$ and the map 
\begin{equation}
\label{eq: repr_isom}
\ell=a x+by+cz\mapsto (a+b+c,-a-b+c, a-b-c, -a+b-c)
\end{equation}
sets up an equivariant isomorpism of the representation of $\Gamma$ in the space of linear forms in variables $x, y, z$ and the standard representation of $S_4$ in $W=\{(x_1,\dots,x_4)\mid x_1+\dots+x_4=0\}\subset\KK^4$. Then this is essentially the problem of calculating stabilizers of points in the $S_4$ in $\KK^4$. The isomorphism class of the stabilizer of a point $x=(x_1,\dots,x_4)\in\KK^4$ is determined by the sizes of groups of equal coordinates $x_i$. These sizes may be $\{4\}, \{3, 1\}, \{2, 2\}, \{2, 1, 1\}, \{1, 1, 1, 1\}$ with stibilizers isomorphic (as abstract groups) to $S_4$, $S_3$, $K_4$, $\ZZ_2$, $1$. By equating coordinates of the image of $\ell$ in~(\ref{eq: repr_isom}) with respect to a chosen partition we obtain the following conditions on the original coordinates $a, b, c$ which should be read up to a permutation:
\begin{itemize}
  \item $a=b=c=0$,
  \item $a^2=b^2=c^2\neq 0$,
  \item $a = b = 0$, $c \neq 0$,
  \item $a^2 = b^2\neq 0, c^2\neq a^2,b^2$,
  \item $a^2, b^2, c^2$ are all distinct.
\end{itemize}

Up to the action of the group $\Gamma$ we get five cases described in the table. If $Y'$ is the variety corresponding to a triple $(a',b',c')$ then there exists an element $g \in \Gamma$ such that $(a',b',c') = g(a,b,c)$, where $(a,b,c)$ satisfies one of the conditions in the table. So the group $\mathrm{Lin}(Y)$ can be obtained from the group in the table by conjugating with $g$.

It remains to assemble the extensions. Conjugation by $\varphi_{\pi,\varepsilon}$ permutes $\sigma_x,\sigma_y,\sigma_z$ according to $\pi$, as one reads off their defining formulas, so $G_\sigma$ is normal in $\Aut(Y)$. Moreover $G_\sigma\cap\mathrm{Lin}(Y)=1$: a linear automorphism permutes the vertices $(1,0),(0,1),(1,1)$ of the base triangle of $T(Y)$, while $\mu(G_\sigma)$ lies in the kernel of the reduction $\PGL_2(\ZZ)\to\PGL_2(\FF_2)$ by Remark~\ref{remark: structure_of_PGL}, so an element of $G_\sigma$ fixes the marking of every vertex modulo~$2$. As the three vertices of the base triangle have distinct nonzero reductions, an element of the intersection fixes all three of them, hence is trivial in $\PGL_2(\ZZ)$, hence is trivial by Corollary~\ref{cr:sigma-faithful}. Thus $\Aut(Y)=G_\sigma\rtimes\mathrm{Lin}(Y)$.
\end{proof}

\begin{remark}\label{cr:aut-natural-cubic}
\begin{enumerate}
 \item Let $Y$ be a triangle surface and $(X,D)$ its triangle completion. Then $\Aut^\natural(X,D)$ is the group of sign changes preserving the equation of $Y$; in particular it is finite of order at most $4$. Explicitly, it is trivial in the second and the last two rows of the table above, isomorphic to $\ZZ_2$ in the third, and isomorphic to $K_4$ in the first.
 \item So in every row of the table in Theorem~\ref{thm: automorphisms_of_cubic_surfaces} the image of $\mu$ is $G_\sigma$ extended by the subgroup of $S_3\cong\PGL_2(\FF_2)$ realized by coordinate permutations.
 \item By Remark~\ref{remark: structure_of_PGL}, the automorphism group of surfaces corresponding to the first and second row of Table~\ref{table_markov} can be written as $K_4\rtimes \PGL_2(\ZZ)$ and  $\PGL_2(\ZZ)$, respectively.
\end{enumerate}
\end{remark}


\begin{remark}\label{rm:double-cover}
The projection $(x,y)\colon Y\to\AA^2$ is a double cover: the equation~\eqref{eq: general_equation} is quadratic in $z$, namely $z^2-(xy-c)z+(x^2+y^2+ax+by+d)=0$. Its deck transformation is the Vieta involution $\sigma_z$, which exchanges the two roots. In particular $\sigma_z$ preserves the fibrations $\pi_{L_x}=x$ and $\pi_{L_y}=y$ and moves $\pi_{L_z}=z$, cf. Lemma~\ref{lm:sigma-tree-action}.
\end{remark}

\section{Examples}\label{sec: examples}

Here we apply our results to some known classes of cubic surfaces.

\subsection{The Markov surface}\label{sec:markov-surface}

The Markov surface is defined in $\AA^3$ by the Markov equation
\begin{equation*}
x^2+y^2+z^2 = 3xyz .
\end{equation*}
Rescaling all three coordinates by $\tfrac13$ brings it to the normal form of Lemma~\ref{lemma: after_affine_transform},
\begin{equation*}
xyz = x^2+y^2+z^2 ,
\end{equation*}
so that $a=b=c=d=0$. This is the first row of the table of Theorem~\ref{thm: automorphisms_of_cubic_surfaces}, whence
\begin{equation*}
\Aut(Y)\cong K_4\rtimes\PGL_2(\ZZ),
\end{equation*}
and $\Aut^\natural(X,D)=K_4$ by Remark~\ref{cr:aut-natural-cubic}(1): the Markov surface is the triangle surface with the largest possible group of automorphisms acting trivially on $T(Y)$. This recovers the description of $\Aut(Y)$ given in~\cite{Pe21} for the Markov-like surfaces $x^2+y^2+z^2-xyz=c$.

Let us make the relation to the classical Markov tree explicit.
Under this rescaling a Markov triple $(m,n,k)$ becomes the integral point $(3m,3n,3k)$ of $Y$, and the classical Vieta involution $(x,y,z)\mapsto(3yz-x,y,z)$ becomes the involution $\sigma_x$, which reads $(x,y,z)\mapsto(yz-x,y,z)$ since $a=0$.
The Markov triples are generated from $(1,1,1)$ by the Vieta involutions together with the coordinate permutations, see~\cite{Aig13}, so the argument at the end of this section applies to the Markov surface as well.

\subsection{Fricke surfaces}
Here we recall notion and constructions from~\cite{UlYi22}.
The cubic surface $F^2$ given by the equation
\begin{equation}
\label{eq:Fricke_equation}
(x+y+z)^2=9xyz
\end{equation}
is called the \textit{double Fricke surface}. In~\cite{UlYi22}, a group law on this surface was introduced, and its connections with some automorphisms of $F^2$ were studied. Let us denote the closure of $F^2$ in $\PP^3$ by $\overline{F}^2$. One quickly verifies that there is only one singular point $(0:0:0:1)$ of $\overline{F}^2$. Besides the three curves of the boundary divisor at infinity, there are three finite lines defined by the parameterizations $\{(s,-s,0)\}$, $\{(0,s,-s)\}$, and $\{(s,0,-s)\}$ in $\AA^3$.

Let us bring the equation~\eqref{eq:Fricke_equation} to the normal form of Lemma~\ref{lemma: after_affine_transform}. Substituting $x\mapsto\tfrac19x+\tfrac29$ and likewise in $y$ and $z$, we obtain
\begin{equation*}
xyz = x^2+y^2+z^2+8(x+y+z)+28 ,
\end{equation*}
so $a=b=c=8$ and $d=28$. By the second row of the table of Theorem~\ref{thm: automorphisms_of_cubic_surfaces},
\begin{equation*}
\Aut(F^2)\cong\PGL_2(\ZZ),
\end{equation*}
and $\Aut^\natural(X,D)=1$ by Remark~\ref{cr:aut-natural-cubic}(1).

The projection $\PP^3\to\PP^2$, $(x:y:z:t)\mapsto (x:y:z)$, restricted to $\overline{F}^2$, is the projection from the singular point $(0:0:0:1)$ and sets up a birational isomorphism between the completion $\overline{F}^2$ of the double Fricke surface and the projective plane $\PP^2$. The inverse reads as
\begin{equation*}
(x:y:z) \mapsto \bigl(L^2 x : L^2 y : L^2 z : 9xyz\bigr),
\end{equation*}
with $L=x+y+z$.

The map
\begin{equation*}
(x:y:z) \mapsto (x^2 Q^2:y^2 Q^2: z^2 Q^2: 9 x^2 y^2 z^2),\qquad Q=x^2+y^2+z^2,
\end{equation*}
also has image in $\overline{F}^2$, but it is the birational inverse above precomposed with the squaring map $(x:y:z)\mapsto(x^2:y^2:z^2)$, hence of degree $4$ rather than $1$. That squaring is not an accident: it is the same squaring that turns a Markov triple $(m,n,k)$ into the positive integral point $(m^2,n^2,k^2)$ of $F^2$, as recalled below.

Introduce the following automorphism of the affine surface $F^2$, the composition of a transposition and $\sigma_z$ (which is of infinite order):
\begin{equation*}
(x,y,z)\mapsto (x, 9xy - 2x - 2y - z, y).
\end{equation*}
By~\cite{UlYi22}, along with permutations of coordinates this mapping generates all positive integral points of $F^2$ from the solution $(1,1,1)$ of the equation~(\ref{eq:Fricke_equation}). It is also stated in \emph{loc. cit.} that these positive integral points are precisely of the form $(m^2,n^2,k^2)$, with $(m, n, k)$ being a Markov triple.

\subsection{Generalized Markov numbers} 
Here we recall definitions from \cite{Gyo21, GyMa23}

The generalized Markov equation is:
\begin{equation*}
x^2 + y^2 +z^2 +xy + xz + yz = 6xyz.
\end{equation*}
Another form of this equation is:
\begin{equation*}
(x+y)^2 + (x+z)^2 +(y+z)^2 = 12xyz.
\end{equation*}
A generalized Markov tuple is a tuple of positive integers $(a, b, c)$ satisfying the generalized Markov equation. In~\cite{BaSe24} the following involution was introduced:
\begin{equation*}
(x,y,z)\mapsto \left(x, y, \frac{x^2 + xy + y^2}{z}\right)
\end{equation*}
or
\begin{equation*}
(x,y,z)\mapsto (x, y, 6xy - x - y - z).
\end{equation*}
It was proved that along with permutations of coordinates this map generates all generalized Markov tuples from the generalized Markov tuple $(1, 1, 1)$.

As with the double Fricke surface, we bring this to the normal form of Lemma~\ref{lemma: after_affine_transform}. Substituting $x\mapsto\tfrac16x+\tfrac16$ and likewise in $y$ and $z$, we obtain
\begin{equation*}
xyz = x^2+y^2+z^2+3(x+y+z)+5 ,
\end{equation*}
so $a=b=c=3$ and $d=5$. The second row of the table of Theorem~\ref{thm: automorphisms_of_cubic_surfaces} gives $\Aut(Y)\cong\PGL_2(\ZZ)$, exactly as for the double Fricke surface.

All three cases of this section repeat the general picture: the positive integral points of the surface are reached from $(1,1,1)$ by involutions together with the coordinate permutations, and the group generated by these maps acts on the induced 3-regular tree of Diophantine solutions is isomorphic to $\PGL_2(\ZZ)$.

\bibliographystyle{plainurl}
\bibliography{markov} 

@misc{Abb24-preprint,
  title = {Unlikely Intersections Problem for Automorphisms of {{Markov}} Surfaces},
  author = {Abboud, Marc},
  date = {2024},
  year = {2024},
  url = {https://arxiv.org/abs/2401.05762}
}

@book{Aig13,
  title = {Markov's Theorem and 100 Years of the Uniqueness Conjecture},
  author = {Aigner, M.},
  date = {2013},
  year = {2013},
  publisher = {Springer}
}

@phdthesis{Bar91,
  title = {The {{Markoff}} Equation and Equations of {{Hurwitz}}},
  author = {Baragar, Arthur},
  date = {1991},
  year = {1991},
  school = {Brown University},
  type = {PhD thesis}
}

@article{Coh55,
  title = {Approach to {{Markoff}}'s Minimal Forms through Modular Functions},
  author = {Cohn, Harvey},
  date = {1955},
  year = {1955},
  journaltitle = {Annals of Mathematics},
  journal = {Annals of Mathematics},
  series = {2},
  volume = {61},
  number = {1},
  pages = {1--12},
  doi = {10.2307/1969618}
}

@book{Dolg_CAG12,
  title = {Classical Algebraic Geometry: A Modern View},
  author = {Dolgachev, Igor V.},
  date = {2012},
  year = {2012},
  publisher = {Cambridge University Press}
}

@book{FrKl12,
  title = {Vorlesungen über Die Theorie Der Automorphen Funktionen. {{Band}} {{II}}: {{Die}} Funktionentheoretischen Ausführungen Und Die Anwendungen},
  author = {Fricke, Robert and Klein, Felix},
  date = {1912},
  year = {1912},
  publisher = {Teubner},
  location = {Leipzig}
}

@book{Seg42,
  title = {The Non-Singular Cubic Surfaces},
  author = {Segre, Beniamino},
  date = {1942},
  year = {1942},
  publisher = {Clarendon Press},
  location = {Oxford}
}

@article{Sil26,
  title = {The {{Markoff}} Equation: {{Past}}, {{Present}}, {{Future}}},
  author = {Silverman, Joseph H.},
  date = {2026},
  year = {2026},
  journaltitle = {Notices of the American Mathematical Society},
  journal = {Notices of the American Mathematical Society},
  volume = {73},
  number = {5},
  pages = {367--375},
  doi = {10.1090/noti3336}
}

@misc{And25,
  title = {Holomorphic Automorphisms of {{Markov-type}} Surfaces},
  author = {Andrist, Rafael B.},
  date = {2025},
  year = {2025},
  url = {https://arxiv.org/abs/2502.15101}
}

@article{BaSe24,
  title = {A Generalization of {{Markov Numbers}}},
  author = {Banaian, Esther and Sen, Archan},
  date = {2024-04},
  year = {2024},
  journaltitle = {The Ramanujan Journal},
  journal = {The Ramanujan Journal},
  volume = {63},
  number = {4},
  pages = {1021--1055},
  issn = {1382-4090, 1572-9303},
  doi = {10.1007/s11139-023-00801-6},
  url = {https://link.springer.com/10.1007/s11139-023-00801-6},
  urldate = {2024-11-23},
  langid = {english}
}

@book{BPV04,
  title = {Compact {{Complex Surfaces}}},
  author = {Barth, Wolf P. and Hulek, Klaus and Peters, Chris A. M. and Van De Ven, Antonius},
  date = {2004},
  year = {2004},
  series = {Ergebnisse Der {{Mathematik}} Und Ihrer {{Grenzgebiete}}. 3. {{Folge}} / {{A Series}} of {{Modern Surveys}} in {{Mathematics}}},
  volume = {4},
  publisher = {Springer Berlin Heidelberg},
  location = {Berlin, Heidelberg},
  doi = {10.1007/978-3-642-57739-0},
  url = {https://link.springer.com/10.1007/978-3-642-57739-0},
  urldate = {2025-04-16},
  isbn = {978-3-540-00832-3 978-3-642-57739-0},
  langid = {english}
}

@article{CaLo09,
  title = {Dynamics on {{Character Varieties}} and {{Malgrange}} Irreducibility of {{Painlevé VI}} Equation},
  author = {Cantat, Serge and Loray, Frank},
  date = {2009},
  year = {2009},
  journaltitle = {Annales de l'Institut Fourier},
  journal = {Annales de l'Institut Fourier},
  volume = {59},
  number = {7},
  pages = {2927--2978},
  doi = {10.5802/aif.2512},
  url = {https://www.numdam.org/articles/10.5802/aif.2512/},
  langid = {english}
}

@article{DeSt24,
  title = {Smooth Limits of Plane Curves of Prime Degree and {{Markov}} Numbers},
  author = {DeVleming, Kristin and Stapleton, David},
  date = {2024-06},
  year = {2024},
  journaltitle = {Journal de l’École polytechnique — Mathématiques},
  journal = {Journal de l’École polytechnique — Mathématiques},
  volume = {11},
  pages = {683--731},
  issn = {2270-518X},
  doi = {10.5802/jep.263},
  url = {https://jep.centre-mersenne.org/articles/10.5802/jep.263/},
  urldate = {2024-11-23},
  langid = {english}
}

@article{ElHu74,
  title = {{{CUBIC SURFACES OF MARKOV TYPE}}},
  author = {Èl'-Huti, M H},
  date = {1974-04-30},
  year = {1974},
  journaltitle = {Mathematics of the USSR-Sbornik},
  journal = {Mathematics of the USSR-Sbornik},
  shortjournal = {Math. USSR Sb.},
  volume = {22},
  number = {3},
  pages = {333--348},
  publisher = {Steklov Mathematical Institute},
  issn = {0025-5734},
  doi = {10.1070/sm1974v022n03abeh001696},
  url = {https://www.mathnet.ru/eng/sm3405},
  urldate = {2025-10-12}
}

@article{GyMa23,
  title = {Generalization of {{Markov Diophantine Equation}} via {{Generalized Cluster Algebra}}},
  author = {Gyoda, Yasuaki and Matsushita, Kodai},
  date = {2023-10-20},
  year = {2023},
  journaltitle = {The Electronic Journal of Combinatorics},
  journal = {The Electronic Journal of Combinatorics},
  shortjournal = {Electron. J. Combin.},
  volume = {30},
  number = {4},
  pages = {P4.10},
  issn = {1077-8926},
  doi = {10.37236/11420},
  url = {https://www.combinatorics.org/ojs/index.php/eljc/article/view/v30i4p10},
  urldate = {2025-10-01}
}

@unpublished{Gyo21,
  title = {Positive Integer Solutions to {{(x+y)²+(y+z)²+(z+x)²=12xyz}}},
  author = {Gyoda, Y.},
  date = {2021},
  year = {2021},
  eprint = {2109.09639},
  eprinttype = {arxiv},
  url = {https://arxiv.org/abs/2109.09639},
}

@book{hartshorne1977,
  title = {Algebraic {{Geometry}}},
  author = {Hartshorne, Robin},
  date = {1977},
  year = {1977},
  series = {Graduate {{Texts}} in {{Mathematics}}},
  volume = {52},
  publisher = {Springer New York},
  location = {New York, NY},
  doi = {10.1007/978-1-4757-3849-0},
  url = {http://link.springer.com/10.1007/978-1-4757-3849-0},
  urldate = {2025-06-27},
  isbn = {978-1-4419-2807-8 978-1-4757-3849-0}
}

@article{HaTe20,
  title = {Solutions of a Generalized Markoff Equation in {{Fibonacci}} Numbers},
  author = {Hashim, Hayder Raheem and Tengely, Szabolcs},
  date = {2020-10},
  year = {2020},
  journaltitle = {Mathematica Slovaca},
  journal = {Mathematica Slovaca},
  volume = {70},
  number = {5},
  pages = {1069--1078},
  issn = {1337-2211, 0139-9918},
  doi = {10.1515/ms-2017-0414},
  url = {https://www.degruyter.com/document/doi/10.1515/ms-2017-0414/html},
  urldate = {2024-11-23},
  langid = {english}
}

@article{JinSch01,
  title = {A {{Diophantine}} Equation Appearing in {{Diophantine}} Approximation},
  author = {Jin, Yuan and Schmidt, Asmus L.},
  date = {2001-12},
  year = {2001},
  journaltitle = {Indagationes Mathematicae},
  journal = {Indagationes Mathematicae},
  shortjournal = {Indagationes Mathematicae},
  volume = {12},
  number = {4},
  pages = {477--482},
  issn = {00193577},
  doi = {10.1016/S0019-3577(01)80036-7},
  url = {https://linkinghub.elsevier.com/retrieve/pii/S0019357701800367},
  urldate = {2025-10-01},
  langid = {english}
}

@online{Kol24-irred,
  title = {Log {{K3}} Surfaces with Irreducible Boundary},
  author = {Kollár, János},
  date = {2024-07-10},
  year = {2024},
  eprint = {2407.08051},
  eprinttype = {arXiv},
  eprintclass = {math},
  doi = {10.48550/arXiv.2407.08051},
  url = {http://arxiv.org/abs/2407.08051},
  urldate = {2025-10-12},
  pubstate = {prepublished}
}

@article{KoVi25,
  title = {Cubic Surfaces with Infinite, Discrete Automorphism Group},
  author = {Kollár, János and Villalobos-Paz, David},
  date = {2025},
  year = {2025},
  journaltitle = {Atti della Accademia Nazionale dei Lincei. Rendiconti Lincei. Matematica e Applicazioni},
  journal = {Atti della Accademia Nazionale dei Lincei. Rendiconti Lincei. Matematica e Applicazioni},
  volume = {36},
  number = {3},
  pages = {563--590},
  doi = {10.4171/RLM/1083},
  url = {https://arxiv.org/abs/2410.03934}
}

@article{Lam16,
  title = {Diophantine Equations via Cluster Transformations},
  author = {Lampe, Philipp},
  date = {2016-09},
  year = {2016},
  journaltitle = {Journal of Algebra},
  journal = {Journal of Algebra},
  shortjournal = {Journal of Algebra},
  volume = {462},
  pages = {320--337},
  issn = {00218693},
  doi = {10.1016/j.jalgebra.2016.04.033},
  url = {https://linkinghub.elsevier.com/retrieve/pii/S0021869316301193},
  urldate = {2025-10-01},
  langid = {english}
}

@article{Mar1879,
  title = {Sur les formes quadratiques binaires indéfinies},
  author = {Markoff, A.},
  date = {1879-09},
  year = {1879},
  journaltitle = {Mathematische Annalen},
  journal = {Mathematische Annalen},
  shortjournal = {Math. Ann.},
  volume = {15},
  number = {3--4},
  pages = {381--406},
  issn = {0025-5831, 1432-1807},
  doi = {10.1007/BF02086269},
  url = {http://link.springer.com/10.1007/BF02086269},
  urldate = {2025-10-01},
  langid = {french}
}

@article{Mar1880,
  title = {Sur les formes quadratiques binaires indéfinies},
  author = {Markoff, A.},
  date = {1880-09},
  year = {1880},
  journaltitle = {Mathematische Annalen},
  journal = {Mathematische Annalen},
  shortjournal = {Math. Ann.},
  volume = {17},
  number = {3},
  pages = {379--399},
  issn = {0025-5831, 1432-1807},
  doi = {10.1007/BF01446234},
  url = {http://link.springer.com/10.1007/BF01446234},
  urldate = {2025-10-01},
  langid = {french}
}

@article{Mor52,
  title = {The Congruence Ax3+by3+c=0 (Mod Xy), and Integer Solutions of Cubic Equations in Three Variables},
  author = {Mordell, L. J.},
  date = {1952-01},
  year = {1952},
  journaltitle = {Acta Mathematica},
  journal = {Acta Mathematica},
  volume = {88},
  pages = {77--83},
  publisher = {Institut Mittag-Leffler},
  issn = {0001-5962, 1871-2509},
  doi = {10.1007/BF02392129},
  url = {https://projecteuclid.org/journals/acta-mathematica/volume-88/issue-none/The-congruence-ax3by3c0-modxy-and-integer-solutions-of-cubic-equations/10.1007/BF02392129.full},
  urldate = {2025-10-01},
  issue = {none}
}

@article{Mor53,
  title = {On the {{Integer Solutions}} of the {{Equation}} X2+y2+z2+2xyz = n},
  author = {Mordell, L. J.},
  date = {1953-10},
  year = {1953},
  journaltitle = {Journal of the London Mathematical Society},
  journal = {Journal of the London Mathematical Society},
  shortjournal = {Journal of the London Mathematical Society},
  volume = {s1-28},
  number = {4},
  pages = {500--510},
  issn = {00246107},
  doi = {10.1112/jlms/s1-28.4.500},
  url = {http://doi.wiley.com/10.1112/jlms/s1-28.4.500},
  urldate = {2025-10-01},
  langid = {english}
}

@article{Pe21,
  title = {Автоморфизмы поверхностей марковского типа},
  author = {Перепечко, Александр Юрьевич},
  date = {2021},
  year = {2021},
  journaltitle = {Математические заметки},
  journal = {Математические заметки},
  volume = {110},
  number = {5},
  pages = {744--750},
  issn = {0025-567X, 2305-2880},
  doi = {10.4213/mzm13263},
  url = {http://mi.mathnet.ru/mzm13263},
  urldate = {2024-11-23},
  langid = {russian}
}

@article{PeZa26,
  title = {Automorphism Groups of Rigid Affine Surfaces: The Identity Component},
  author = {Perepechko, Alexander and Zaidenberg, Mikhail},
  journal = {Algebraic Geometry},
  year = {2026},
  note = {accepted for publication}
}

@misc{ReRo24,
  title = {Questions about the Dynamics on a Natural Family of Affine Cubic Surfaces},
  author = {Rebelo, Julio and Roeder, Roland},
  date = {2024},
  year = {2024},
  url = {https://arxiv.org/abs/2307.10962}
}

@article{Ros79,
  title = {Über Die {{Diophantische Gleichung}} Ax2 + By2 + Cz2 = Dxyz.},
  author = {Rosenberger, Gerhard},
  date = {1979-01-01},
  year = {1979},
  journaltitle = {Journal für die reine und angewandte Mathematik (Crelles Journal)},
  journal = {Journal für die reine und angewandte Mathematik (Crelles Journal)},
  volume = {1979},
  number = {305},
  pages = {122--125},
  issn = {0075-4102, 1435-5345},
  doi = {10.1515/crll.1979.305.122},
  url = {https://www.degruyter.com/document/doi/10.1515/crll.1979.305.122/html},
  urldate = {2025-10-01}
}

@article{UlYi22,
  title = {Composition Laws on the {{Fricke}} Surface and {{Markov}} Triples},
  author = {Uludağ, Abdurrahman Muhammed and Yilmaz, Esra Ünal},
  date = {2022-01},
  year = {2022},
  journaltitle = {Turkish Journal of Mathematics},
  journal = {Turkish Journal of Mathematics},
  volume = {46},
  number = {3},
  pages = {785--799},
  issn = {1300-0098},
  doi = {10.55730/1300-0098.3123},
  url = {https://journals.tubitak.gov.tr/math/vol46/iss3/8},
  urldate = {2024-11-23},
  langid = {english}
}

@incollection{FKS12,
  title = {Asymptotically Rigid Mapping Class Groups and {{Thompson}}'s Groups},
  author = {Funar, Louis and Kapoudjian, Christophe and Sergiescu, Vlad},
  date = {2012},
  year = {2012},
  booktitle = {Handbook of {{Teichm\"uller}} Theory, {{Volume III}}},
  editor = {Papadopoulos, Athanase},
  series = {{{IRMA}} Lectures in Mathematics and Theoretical Physics},
  volume = {17},
  pages = {595--664},
  publisher = {European Mathematical Society},
  location = {Z\"urich}
}

@book{Harpe00,
  title = {Topics in Geometric Group Theory},
  author = {de la Harpe, Pierre},
  date = {2000},
  year = {2000},
  publisher = {University of Chicago Press},
  series = {Chicago Lectures in Mathematics},
  isbn = {978-0-226-31719-9}
}

\end{document}